

\documentclass[12pt]{amsart}
\usepackage{geometry}  
\geometry{letterpaper} 
\usepackage{graphicx}
\usepackage{amssymb}
\usepackage{amsmath}
\usepackage{color}
\usepackage{hyperref}
\usepackage{pdfsync}
 \usepackage{tikz}
 \usepackage{tikz-cd}

\usepackage[all]{xy}
\xyoption{matrix}
\xyoption{arrow}

\usetikzlibrary{arrows,decorations.pathmorphing,backgrounds,positioning,fit,petri}

\textwidth = 6 in 
\textheight = 8.6 in 
\oddsidemargin = .2 in 
\evensidemargin = .2 in 
\topmargin = 0.1 in
\headheight = 0.0 in
\headsep = 0.2 in
\parskip = 0.0in
\parindent = 0.2in

 
\newtheorem{thm}{Theorem}[section]
\newtheorem{lem}[thm]{Lemma}
\newtheorem{cor}[thm]{Corollary}
\newtheorem{prop}[thm]{Proposition}

\theoremstyle{definition}
\newtheorem{defn}[thm]{Definition}
\newtheorem{eg}[thm]{Example}

\theoremstyle{remark}
\newtheorem{rem}[thm]{Remark}
 
\numberwithin{equation}{section}

\DeclareGraphicsRule{.tif}{png}{.png}{`convert #1 `dirname #1`/`basename #1 .tif`.png}

\newcommand{\vs}[1]{\vskip .#1 cm}

 \newcommand{\onto}{\twoheadrightarrow}


\newcommand{\ot}{\leftarrow}

\DeclareMathOperator{\Hom}{Hom}%
\DeclareMathOperator{\Ext}{Ext}%
%
%
%
%
%
%
%

\DeclareMathOperator{\undim}{\underline{dim}}
 
\newcommand{\field}[1]{\mathbb{#1}}
\newcommand{\ZZ}{\ensuremath{{\field{Z}}}}
\newcommand{\CC}{\ensuremath{{\field{C}}}}
\newcommand{\RR}{\ensuremath{{\field{R}}}}

\newcommand{\QQ}{\ensuremath{{\field{Q}}}}

\newcommand{\commentout}[1]{}

\newcommand{\cC}{\ensuremath{{\mathcal{C}}}}

\newcommand{\cI}{\ensuremath{{\mathcal{I}}}}

\newcommand{\cS}{\ensuremath{{\mathcal{S}}}}

\newcommand{\cT}{\ensuremath{{\mathcal{T}}}}
\newcommand{\cU}{\ensuremath{{\mathcal{U}}}}

\newcommand{\cW}{\ensuremath{{\mathcal{W}}}}

\newcommand{\cX}{\ensuremath{{\mathcal{X}}}}
\newcommand{\cY}{\ensuremath{{\mathcal{Y}}}}

\newcommand{\no}[1]{}

\title{Short history of signed exceptional sequences}
\author{Kiyoshi Igusa}
\address{Department of Mathematics, Brandeis University, Waltham, MA 02454}\email{igusa@brandeis.edu}
\thanks{First author supported by Simons Foundation Grant \#686616}
\author{Gordana Todorov}
\address{Department of Mathematics, Northeastern University, Boston, MA 02115}\email{g.todorov@northeastern.edu}

\date{\today} 

\subjclass[2020]{
16G20; 20F55}


\keywords{picture groups, picture spaces, semi-invariants, wall-and-chamber structure, cluster category, cluster-tilting objects, Stasheff associahedron}

\begin{document}

\begin{abstract}
Whereas exceptional sequences have a long history with many well-known connections to combinatorics, signed exceptional sequences are relatively recent. The authors introduced this concept in 2017 \cite{IT13} although it was retroactively realized that the category of noncrossing partitions \cite{Cat0} is a special case of this construction. Buan and Marsh \cite{BM} have introduced the concept of $\tau$-exceptional sequences to generalize the definitions and theorems to all finite dimensional algebras. This short paper is the story of the original concept of signed exceptional sequences for hereditary algebras and how it developed out of the two authors' study of algebraic K-theory, link invariants and cluster combinatorics.
\end{abstract}

\maketitle

\tableofcontents

\section*{Introduction} Exceptional sequences originated in the theory of sequences of line bundles over weighted projective space \cite{Rudakov} and the action of the braid group on such sequences. This was almost immediately converted into analogous constructions and theorems about representations of quivers. Combinatorial versions of these constructions seem to have been known much longer \cite{Hurwitz}, \cite{Loo}. We will stick to the quiver representation formulation of these objects for now.

Let $\Lambda$ be a finite dimensional hereditary algebra over a field $K$. Often we assume $\Lambda=KQ$ the path algebra of a quiver $Q$ without oriented cycles. We call a $\Lambda$-module $E$ \emph{exceptional} if it is rigid, meaning $\Ext^1_\Lambda(E,E)=0$, or equivalently $\Hom_\Lambda(E,\tau E)=0$ and a \emph{brick} which means the endomorphism ring of $E$ is a division algebra, i.e., any nonzero endomorphism of $E$ is an automorphism of $E$. If $Q$ is a Dynkin quiver, all indecomposable $KQ$-modules are rigid bricks.

An \emph{exceptional sequence} of length $k$ is a sequence of exceptional modules $(E_1,\cdots,E_k)$ with no backward Hom or Ext, i.e.,
\[
\Hom_\Lambda(E_j,E_i)=0=\Ext^1_\Lambda(E_j,E_i)
\]
for $i<j$. The exceptional sequence is called \emph{complete} if it is maximal, i.e., $k=n$ the rank of $\Lambda$ which is the number of nonisomorphic simple $\Lambda$-module. This is also the number of vertices of the quiver $Q$.

Every exceptional sequence $(E_1,\cdots,E_k)$ produces a ``perpendicular category'' or, more precisely, a ``Hom-Ext perpendicular category'' $(E_1\oplus\cdots\oplus E_k)^{\perp_{01}}$ which is the full subcategory of all modules $M$ so that $\Hom_\Lambda(E_i,M)=0=\Ext^1_\Lambda(E_i,M)$ for all $i\le k$. Call this category $\cW_k$. Then $\cW_k$ is a finitely generated ``wide subcategory'' of $mod\text-\Lambda$ \cite{IngTh} which is equivalent to the module category of a hereditary algebra of rank $n-k$. The exceptional sequence $E_1,\cdots,E_k$ can be completed on the left by adjoining a complete exceptional sequence of $\cW_k$. We get a complete exceptional sequence
\[
    (X_1,\cdots,X_{n-k},E_1,\cdots,E_k).
\]
In this sequence $X_{n-k}$ is called \emph{relatively projective} if it is a projective object of the perpendicular category $\cW_k$. More generally, an object $E_i$ in an exceptional sequence is relatively projective if it is a projective object of the perpendicular category of the objects to its right. In particular, the last term $E_k$ is relatively projective if and only if it is projective. A \emph{signed exceptional sequence} is an exceptional sequence together with signs (+/-) on it relatively projective terms. 

In this paper, we will tell the story of how this concept evolved. However, the narative is in heuristic rather than in chronological order.

Signed exceptional sequences arose from the study of ``pictures'', ``picture groups'' and ``picture spaces''. The broad outline is as follows. In the 1970's, for his PhD thesis work, the first author developed the concept of a ``picture'' and used it to define algebraic K-theory invariants for diffeomorphisms of smooth manifolds. Later, working with Kent Orr, pictures were used to settle a conjecture about link invariants. Around the turn of the century, Fomin and Zelevinsky developed the theory of cluster algebras \cite{FZ1,FZ2,FZ4} and the second author, together with Buan, Marsh, Reineke and Reiten related this to tilting theory. They ``categorified'' cluster theory by creating the cluster category \cite{BMRRT}. This was also done in the $A_n$ case by Caldero, Chapoton and Schiffler \cite{CCS}. Fomin-Zelevinsky clusters were shown to correspond to ``cluster-tilting objects'' in the cluster category. These were extensions of ``tilting objects'' which were sets of indecomposable modules which formed the vertices of a triangulation of the $n$-simplex. By adding shifted projective objects, this triangulation was extended to a triangulation of the $n$-sphere. The first author saw the resemblance with his ``pictures'' for the Steinberg group. Jerzy Weyman recognized these pictures as variants of the semi-invariant pictures from his work with Harm Derksen. The four authors: Igusa, Orr, Todorov, Weyman combined their ideas in \cite{IOTW}, \cite{IOTW2}.

Next, came the study of picture groups and their homology. The first author had developed ``pictures'' to study the third homology of groups, the Steinberg group for his thesis and torsion-free nilpotent groups for studying link invariants with Kent Orr. The question was: what was the underlying group for the semi-invariant pictures of \cite{IOTW}, \cite{IOTW2}? The two authors of this paper, together with Jerzy Weyman, invented ``picture groups'' and computed their homology and cohomology in the case of quivers of type $A_n$ \cite{ITW16}. To do that, they computed the homology of the ``picture space'' which is a simplicial complex with simplices corresponding to cluster-tilting objects. They needed to know that the picture space is a $K(\pi,1)$ with fundamental group the picture group and therefore has the same integral homology as the picture group.

There are several proofs that the picture space is a $K(\pi,1)$. The most satisfying proof shows that the space is a cubical complex which is locally CAT(0) and, therefore, a $K(\pi,1)$. To covert a simplicial complex into a cubical complex, we take the category of simplices with inclusion maps. This is the cluster morphism category. Sequences of minimal simplex inclusions are signed exceptional sequences. Every simplex (corresponding to a cluster tilting object) produced a cube of morphisms. There are $n!$ edge paths for each cube and therefore we obtain $n!$ signed exceptional sequences for every cluster-tilting object.

Outline of this paper: In Section \ref{sec 1: pictures}, we define ``pictures'' and discuss their relation to homology of groups. Pictures are planar diagrams labeled with generators and relations of a group $G$. Pictures represent elements of $H_3G$, the third integral homology of the group $G$ (Theorem \ref{thm: pictures give H3G}). For the Steinberg group of a ring $R$, $H_3(St(R))$ is isomorphic to $K_3(R)$, the third algebraic $K$-theory group of $R$ (Theorem \ref{Gersten Thm}). In Section \ref{sec 2: cluster categories}, we discuss cluster categories and cluster-tilting objects in a cluster category. We indicate how, for finite type, this gives a triangulation of the unit sphere $S^{n-1}$ in $\RR^n$. In Section \ref{sec 3: semi-invariant pictures}, we show how the ``walls'' of the picture are given as domains of definition of semi-invariants on the representation space. In Section \ref{sec 4: picture group} we define picture groups and outline the construction of picture spaces for Dynkin quivers. One example had already been constructed by Loday. He constructed what we now call the picture space of type $A_n$ by identifying faces of the Stasheff associahedron. We go over Loday's example and generalize it. We also present our calculation of the homology of picture groups of type $A_n$ assuming the picture space is a $K(\pi,1)$ for the picture group. In Section \ref{sec 5: simplices to cubes} we explain how dualizing a simplex makes it into a cube. So, picture space is seen to be a union of cubes, one for every cluster-tilting object of the category. In Section \ref{sec 6: signed exceptional sequences in general}, we construct the cluster morphism category and show that its geometric realization is a union of cubles. Finally, in Section \ref{sec 7: signed exceptional sequences} we define signed exceptional sequences and give examples.

Although torsion classes were not used in the development of signed exceptional sequences, they are a popular area of study and we incorporate them into the pictures in the final section.

\section{Pictures and homology of groups}\label{sec 1: pictures} 

The story begins in 1979, with the first author's PhD thesis \cite{thesis} in which he introduced the concept of ``pictures'' to find an algebraic K-theory invariant for the fundamental group of certain diffeomorphism spaces of smooth manifolds. There were earlier versions of the idea of pictures. They were sometimes called ``spherical diagrams'' \cite{LS}. But Igusa was able to make detailed calculations using pictures to compute new invariants in algebraic topology \cite{GrInvRedrawn}, \cite{IgOrr}, \cite{IgKlein}. The original definition was 2-dimensional, a planar diagram.

\begin{defn}\label{def: 2d picture} Let $G$ be a group with a presentation $G=\left<\cX|\cY\right>$ where elements of $\cY$ (the relations) are cyclically reduced words in $\cX\cup \cX^{-1}$ and $\cY\cap\cY^{-1}=\emptyset$,
{\color{black}A \emph{picture} for $G$ is a planar multigraph consisting of labeled vertices and labeled edges with additional structure. Each vertex is labeled with an element of $\cY\cup\cY^{-1}$ and decorated with an asterisk $\ast$ in one chosen region between incident edges. We call this the ``base point direction'' at the vertex. In some cases we decorate a vertex with its vertex label which we place in the region of the base point direction and, in that case, the asterisk becomes redundant and we delete it. Each edge is oriented and labeled with an element of $\cX$. At each vertex we require that the labels of the edges incident to that vertex, with the label inverted when the edge points away from the vertex, read counterclockwise from the base point direction at the vertex, give a word in $\cX\cup \cX^{-1}$ which is the element of $\cY\cup\cY^{-1}$ which is the label at that vertex. }

We use the drawing convention that indicates the orientation of edges by having them curve to the right. For example, circles should be oriented clockwise.
\end{defn}

{\color{black}In some cases, such as Example \ref{eg: 3 circles} below, the edge labels will uniquely determine the vertex labels together with their base point directions in the sense that there is only one choice of these vertex structures which is consistent with the definition. By this we mean there is a unique region next to the vertex so that, reading the edge labels counterclockwise around the vertex starting in this region, we obtain an element of $\cY\cup\cY^{-1}$. That unique region is the base point direction and the element of $\cY\cup\cY^{-1}$ that we obtain is the vertex label. In that case, we often omit these uniquely determined labels. In Example \ref{eg: 3 circles}, we could have deleted the vertex labels since they are uniquely determined. However, for better understanding of the basic definitions, we have inserted the vertex labels and placed them in the base point direction of each vertex.}

Loday \cite{Loday} has also given a short account of definitions and proofs for all of the main properties of what he called ``Igusa's pictures''. 

\begin{eg}\label{eg: Z3 picture}
The following is a picture for $G=\ZZ_3=\left<x|x^3\right>$.
    \begin{center}
\begin{tikzpicture}

\coordinate (A) at (0,.5);
\coordinate (B) at (3.1,0.2);
\draw[fill] (A) circle[radius=2pt];
\draw[fill] (B) circle[radius=2pt];
\draw[thick,->] (A)..controls (1,2.1) and (2,2.1)..(3,.6);
\draw[thick,->] (A)..controls (1,1.4) and (2,1.4)..(2.9,.4);
\draw[thick,->] (A)..controls (1,.7) and (2,.7)..(2.8,.2);
\draw (1.2,.4) node{$x$};
\draw (1.5,1) node{$x$};
\draw (.3,1.3) node{$x$};
\draw (2.5,.57) node{$\ast$};
\draw (.4,.4) node{$\ast$};
\draw (B) node[right]{$x^3$};
\draw (A) node[left]{$x^{-3}$};
\end{tikzpicture}
\end{center}
There are three oriented edges labeled $x$. The labels of the vertices are elements of $\cY\cup\cY^{-1}$ given by reading the incoming edge labels (and inverse outgoing labels) counterclockwise starting at the base point direction indicated with $\ast$. In this picture, there are three possible base point directions at each vertex. They give the same relation ($x^3$ on the right and $x^{-3}$ on the left). However, if we were to move the base point direction at either vertex, the picture would become a different picture not isomorphic to the given picture. The orientation and curvature of edges agrees with the drawing convention that edges should be curved to the right.
\end{eg}

\begin{eg}\label{eg: 3 circles}
Here $G=\ZZ^3=\left<x,y,z|[x,y],[y,z],[z,x]\right>$. We use the ``representation theoretic'' convention that 
\[
    [x,y]:=y^{-1}xyx^{-1}
\]
instead of the usual $xyx^{-1}y^{-1}$. In \cite{GrInvRedrawn} we took our old paper and redrew the pictures using the representation theoretic convention to convert the original Morse theory pictures into semi-invariant pictures. Here is a picture for $G=\ZZ^3$ with vertices and edges labeled.
\vs2
{\begin{center}
\begin{tikzpicture}[scale=1.4] 
{ 
\begin{scope}[xshift=6cm]
\draw (0.3,1.2) node{$a$};
\draw (0.14,-1.1) node[right]{$a^{-1}$};
\begin{scope}[xshift=-.7cm]
	\draw[thick] (0,0) circle[radius=1.4cm];
		\draw[thick,->] (-.985,.99)--(-.88,1.09); 
		\draw (-1,1.1) node[left]{$x$};
\end{scope}
\begin{scope}[xshift=.7cm]
	\draw[thick] (0,0) circle[radius=1.4cm];
	\draw[thick,<-] (.99,.99)--(.89,1.09); 
		\draw (1,1.1) node[right]{$y$};
		\draw (.55,-1.4) node{$b$};
		\draw (-1.55,-.45) node{$b^{-1}$};
\end{scope}
\begin{scope}[yshift=-1.3cm]
	\draw[thick] (0,0) circle[radius=1.4cm];
	\draw[thick,<-] (1.33,-.43)--(1.37,-.3); 
	\draw (1.37,-.36) node[right]{$z$};
	\draw (.45,1.55) node{$c^{-1}$};
	\draw (-1.5,0.35) node{$c$};
\end{scope}
\end{scope}
}
\begin{scope}[xshift=9cm]
\draw (0,0) node[right]{$a=[x,y]=y^{-1}xyx^{-1}$};
\draw (0,-.5) node[right]{$b=[y,z]=z^{-1}yzy^{-1}$};
\draw (0,-1) node[right]{$c=[z,x]=x^{-1}zxz^{-1}$};
\end{scope}
\end{tikzpicture}
\end{center}
}
{All six vertices are labeled with relations or inverse relations. The labels of the vertices are {placed} in the base-point direction. For example, at the lower right vertex, we have the label $b=[y,z]$. The location of the base-point direction is uniquely determined in this case since the four possible words that can be read counterclockwise at that vertex are:
\[
	yzy^{-1}z^{-1}, zy^{-1}z^{-1}y, y^{-1}z^{-1}yz, z^{-1}yzy^{-1}.
\]
Only the last one $[y,z]:=z^{-1}yzy^{-1}$ is in our list of relation or inverse relations. So, $b=[y,z]$ is the relation at that vertex and the word order dictates that we start at the location (space between edges) marked ``$b$''.}

We use the convention that smooth curves have the same label throughout. Also, when the base point directions are uniquely determined and the relations at the vertices are uniquely determined as in this example (but not in Example \ref{eg: Z3 picture}), we can omit them from the notation. In this picture, the vertex labels $a,b,c$ and their inverses can be safely omitted.
\end{eg}

\begin{thm}\label{thm: pictures give H3G}
    A picture $P$ for $G=\left<\cX|\cY\right>$ represents an element of $H_3(G)$, the third homology of $G$ if, for every $y\in\cY$, $y$ and $y^{-1}$ occur the same number of times as vertex labels. Furthermore, every element of $H_3(G)$ is represented by such a picture.
\end{thm}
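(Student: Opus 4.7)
The plan is to realize $G$ through its presentation 2-complex $X=X(\cX,\cY)$ (with one 0-cell, a 1-cell for each $x\in\cX$, and a 2-cell for each $y\in\cY$) and then to extend $X$ to an Eilenberg--MacLane space $K=K(G,1)$ by attaching cells of dimension $\ge 3$. Since $H_3(G)=H_3(K)$ and the inclusion $X\hookrightarrow K$ induces an isomorphism on $\pi_1$, we can work cellularly in $K$.

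First, I interpret a picture $P$ geometrically. By transversality, the labels on $P$ determine a pointed cellular map $f_P\colon S^2\to X\subset K$: the regions of $P$ (compactifying $\RR^2$ by a point at infinity) are the preimages of the 0-cell; each oriented edge labeled $x\in\cX$ is the transverse preimage of the 1-cell $x$; each vertex labeled $y^{\varepsilon}$ covers the 2-cell $y$ with degree $\varepsilon\in\{\pm1\}$. The induced cellular 2-chain $(f_P)_*[S^2]\in C_2(X;\ZZ)$ is then the signed vertex count, which vanishes on each $y\in\cY$ precisely under the balance hypothesis.

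Since $\pi_2(K)=0$, the map $f_P$ extends to some $F_P\colon D^3\to K$; after a small cellular perturbation we may take its fundamental chain $(F_P)_*[D^3]\in C_3(K;\ZZ)$. Its cellular boundary equals $(f_P)_*[S^2]=0$, so the chain is a 3-cycle and determines a class $[P]\in H_3(K)=H_3(G)$. Any two extensions of $f_P$ differ by an element of $\pi_3(K)=0$, so $[P]$ depends only on $P$. For surjectivity, any $\alpha\in H_3(G)$ is represented by a cellular 3-cycle $c$; each 3-cell of $K$ is attached along an $S^2\to X$ itself presentable by a picture, and a disjoint union of such pictures with orientations dictated by the coefficients of $c$ yields a single picture $P$ whose signed vertex count equals $\partial c=0$. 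Thus $P$ is balanced and represents $\alpha$.

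The technical crux, which I expect to be the main obstacle, is the dictionary between elementary moves on pictures (planar isotopy, bridge moves, birth/death of cancelling pairs of vertices, edges sliding across vertices) and 3-boundary relations in $C_*(K)$ arising from attached 4-cells. Once this is in place, the assignment from balanced pictures modulo these moves to $H_3(G)$ is both well defined and surjective. This analysis is carried out in Igusa's thesis \cite{thesis}, with a concise account by Loday \cite{Loday}.
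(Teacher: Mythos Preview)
The paper does not actually prove this theorem; it is a survey, and the result is merely stated with the understanding that the proof is in \cite{thesis} and the exposition in \cite{Loday}. So there is nothing to compare against here.

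Your argument is the standard one and is essentially correct. A picture is exactly the transversality data of a map $S^2\to X$ into the presentation $2$-complex; the balance condition is precisely the vanishing of the Hurewicz image in $C_2(X)=C_2(K)$; extending over $D^3$ inside $K=K(G,1)$ and invoking $\pi_2(K)=\pi_3(K)=0$ gives a well-defined class in $H_3(K)=H_3(G)$; and surjectivity follows by reading off pictures from the attaching maps of the $3$-cells appearing in a cellular $3$-cycle. One small point you glide over: well-definedness also requires independence of the choice of $K$ (the cells you attach to kill higher homotopy are not canonical), but this is handled by the fact that any two models of $K(G,1)$ extending $X$ are homotopy equivalent rel $X$, so the identification $H_3(K)\cong H_3(G)$ is canonical. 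Your last paragraph correctly flags that the finer statement---that deformation classes of balanced pictures biject with $H_3(G)$---requires matching picture moves with $4$-cell boundaries, but that is not needed for the theorem as stated.
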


The examples above satisfy this condition. So, they represent elements of $H_3(G)$. In fact these pictures give the generators of $H_3(\ZZ_3)=\ZZ_3$ and $H_3(\ZZ^3)=\ZZ$. Later, in Example \ref{fig: Heisenberg group} we will see another picture satisfying this condition. It represents the generator of $H_3(H)$, the Heisenberg group \cite{IgOrr}.
To do algebraic K-theory we start with a theorem of Gersten:

\begin{thm}\cite{Gersten}\label{Gersten Thm}
    For any ring $R$, the 3rd algebraic K-theory group $K_3(R)$ is isomorphic to $H_3(St(R))$.
\end{thm}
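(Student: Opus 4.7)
The plan is to interpret both sides of the claimed isomorphism through Quillen's plus construction. By definition $K_3(R) = \pi_3(BGL(R)^+)$, so the task reduces to producing an isomorphism $\pi_3(BGL(R)^+) \cong H_3(St(R))$. I would build this isomorphism out of the plus construction $BSt(R)^+$ of the classifying space of the Steinberg group, using the Hurewicz theorem together with the homological invariance of the plus construction.

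First I would recall Steinberg's theorem: the natural surjection $St(R) \twoheadrightarrow E(R)$ onto the elementary subgroup $E(R)\subset GL(R)$ is the universal central extension of the perfect group $E(R)$, with central kernel $K_2(R)$. In particular $St(R)$ is itself \emph{superperfect}, that is $H_1(St(R))=H_2(St(R))=0$. Since $St(R)$ is perfect, the plus construction applies to $BSt(R)$ with respect to all of $\pi_1$, producing a simply connected CW complex $BSt(R)^+$ with $H_*(BSt(R)^+)\cong H_*(St(R))$. The vanishing of $H_2(St(R))$ combined with Hurewicz forces $\pi_2(BSt(R)^+)=0$, so $BSt(R)^+$ is in fact $2$-connected.

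Next I would identify $\pi_3(BSt(R)^+)$ with $K_3(R)$. The central extension gives a principal fibration
\[
BK_2(R)\longrightarrow BSt(R)\longrightarrow BE(R),
\]
which after plus-construction becomes a fibration $BSt(R)^+\to BE(R)^+\to K(K_2(R),2)$ whose rightmost map classifies the extension and is an isomorphism on $\pi_2$. The long exact sequence of homotopy groups then yields $\pi_n(BSt(R)^+)\cong \pi_n(BE(R)^+)$ for all $n\geq 3$. Since $BE(R)^+$ shares its universal cover with $BGL(R)^+$, this common value is $K_n(R)$. With $BSt(R)^+$ already known to be $2$-connected, a second application of Hurewicz gives
\[
K_3(R)\;=\;\pi_3(BSt(R)^+)\;\cong\;H_3(BSt(R)^+)\;\cong\;H_3(St(R)),
\]
completing the argument.

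The main obstacle is the input from universal central extension theory: the heart of the proof is Steinberg's classical computation that $St(R)$ is superperfect. The remaining steps are essentially formal, amounting to bookkeeping with the plus construction and Hurewicz isomorphisms, but they do require one technical point, namely that the plus construction converts the principal bundle for a central extension into the anticipated $K(K_2(R),2)$-fibration. Both ingredients are standard and appear in all treatments of Quillen $K$-theory.
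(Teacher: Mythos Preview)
The paper does not give a proof of this theorem; it is simply quoted as a known result with a citation to Gersten's original 1973 paper, and is used only as background motivation for why pictures for the Steinberg group compute $K_3$. So there is nothing in the paper to compare your argument against.

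That said, your sketch is the standard modern proof and is correct. The key inputs are exactly the ones you name: $St(R)$ is superperfect (being the universal central extension of the perfect group $E(R)$), so $BSt(R)^+$ is $2$-connected, and then Hurewicz identifies $\pi_3(BSt(R)^+)$ with $H_3(St(R))$. The remaining step, that $BSt(R)^+$ is the $2$-connected cover of $BGL(R)^+$ and hence has $\pi_n\cong K_n(R)$ for $n\ge 3$, you extract from the fibration sequence associated to the central extension. The only technical point deserving care is your claim that the plus construction turns the principal $K_2(R)$-bundle into a fibration with base $K(K_2(R),2)$; this follows from the fact that plus construction preserves fibrations when the fundamental group acts nilpotently on the homology of the fiber (trivially here, since the extension is central), which is standard but not completely formal. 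An equivalent and slightly more direct formulation is to note that the homotopy fiber of $BSt(R)^+\to BE(R)^+$ is $K(K_2(R),1)$, so the long exact sequence in homotopy immediately gives the isomorphism on $\pi_n$ for $n\ge 3$.
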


Here $St(R)$ represents the infinite Steinberg group which is defined as follows.

\begin{defn}\label{def: Steinberg gp}
    For any ring $R$, the \emph{Steinberg group} of $R$ is the group with generators $x_{ij}^r$ for positive integers $i\neq j$ and $r\in R$ subject to the following relations.
\begin{enumerate}
    \item[(0)] $x_{ij}^rx_{ij}^s=x_{ij}^{r+s}$
    \item $[x_{ij}^r,x_{k\ell}^s]=1$ if $j\neq k$ and $i\neq \ell$.
    \item $[x_{ij}^r,x_{jk}^s]=x_{ik}^{rs}$.
\end{enumerate}
\end{defn}
In \cite{thesis}, \cite{GrInvRedrawn} we restricted to the case of an integer group ring $R=\ZZ \pi$. In that case, we can take as generators $x_{ij}^u$ where $u\in \pi$ and take (0) not as a relation but as the definition of $x_{ij}^r$ for any $r\in \ZZ\pi$: $x_{ij}^{\sum n_ku_k}=\prod (x_{ij}^{u_k})^{n_k}$.

Combining the last two theorems we conclude that elements of $K_3(\ZZ\pi)$ were represented by certain pictures for the Steinberg group of $\ZZ\pi$. But this is not good enough since 2-parameter Morse theory produces arbitrary pictures for the Steinberg group. We needed the following.

\begin{thm}\cite{GrInvRedrawn}\label{thm: all pictures for St are good}
Every picture for $St(\ZZ\pi)$ produces an element of $K_3(\ZZ\pi)$ in a canonical way. More precisely, $K_3(\ZZ\pi)$ is isomorphic to the group of deformation classes of pictures for $St(\ZZ\pi)$ modulo the ``second order Steinberg relations''.
\end{thm}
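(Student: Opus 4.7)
The plan is to extend the picture-to-$H_3$ construction of Theorem~\ref{thm: pictures give H3G} from balanced pictures to arbitrary pictures, exploiting the very rigid structure of the Steinberg presentation. Combining Theorems~\ref{thm: pictures give H3G} and~\ref{Gersten Thm} already gives a map from balanced pictures for $St(\ZZ\pi)$ into $K_3(\ZZ\pi)$; the task is to show this map extends canonically to \emph{all} pictures, and to identify precisely which pictures represent the same element.

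First, I would organize the Steinberg relations (0), (1), (2) into the low-degree part of a free resolution
\[
F_3 \to F_2 \to F_1 \to \ZZ[St(\ZZ\pi)] \to \ZZ
\]
over the integral group ring $\ZZ[St(\ZZ\pi)]$, in which $F_2$ is free on one basis element per cyclic relation word and $F_3$ is free on a complete set of syzygies among those relations. The generators of $F_3$ are precisely what should be called the \emph{second order Steinberg relations}; plausible representatives include associativity of addition for three successive applications of (0), a Jacobi-type identity among three nested uses of (2), and compatibility identities between (1) and (2). Under Gersten's isomorphism, elements of $K_3(\ZZ\pi) \cong H_3(St(\ZZ\pi))$ are then represented by $3$-cycles in this resolution.

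Second, for an arbitrary picture $P$ for $St(\ZZ\pi)$, I would construct a canonical balanced enhancement $P'$. Whenever a relation vertex $y$ has multiplicity different from that of $y^{-1}$, the surplus can be cancelled by inserting local $y \cdot y^{-1}$ pairs and routing them through the plane using second-order moves; because the Steinberg group is perfect and relation (2) expresses each off-diagonal generator as a commutator, these cancellations can be performed consistently inside the planar diagram. Then Theorem~\ref{thm: pictures give H3G} applied to $P'$ yields the desired canonical element $[P] := [P'] \in K_3(\ZZ\pi)$.

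Finally, I would verify that $[P]$ depends only on the deformation class of $P$ modulo the second order Steinberg relations, and establish that the resulting map is an isomorphism. Surjectivity is immediate from Theorem~\ref{thm: pictures give H3G}, since every balanced picture is in particular a picture. The main obstacle will be injectivity: showing that any two pictures yielding the same element in $H_3$ are linked by a finite sequence of deformations together with second-order Steinberg moves. This requires a complete description of the $3$-syzygies in the Steinberg presentation and a verification that every $3$-boundary in $F_3$ can be realized by a planar sequence of such moves, which is the hardest technical step and the heart of the argument in \cite{GrInvRedrawn}.
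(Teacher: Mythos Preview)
The paper does not prove this theorem; it is stated with a citation to \cite{GrInvRedrawn}, and the surrounding text only supplies the definitions of deformation (concordance of like-labeled edges and cancellation of inverse relation pairs) together with pictures of some of the five families of second order Steinberg relations (Figures~\ref{fig: two second order relations} and~\ref{Fig: A3 picture with xij}). So there is no argument in this paper to compare your outline against.

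That said, your proposal has a genuine gap in Step~3. Inserting a local $y\cdot y^{-1}$ pair increases the multiplicity of $y$ \emph{and} of $y^{-1}$ by one each, so the difference between the two counts is unchanged; you cannot ``balance'' a picture this way. The mechanism that allows arbitrary pictures to determine a $K_3$ class is not a balancing trick but rather that, modulo the five specific families of second order Steinberg relations (which are themselves pictures, not abstract syzygy generators in a free resolution), every picture becomes equivalent to one satisfying the hypothesis of Theorem~\ref{thm: pictures give H3G}. Your free-resolution picture in Step~2 is a reasonable heuristic, but your guess at the second order relations (``associativity for (0)'', ``Jacobi for (2)'') does not match what the paper actually exhibits: they are concrete planar diagrams, and the paper says explicitly that there are five series of them, three of which are drawn. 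Any correct proof must work with those specific diagrams and show they suffice; your outline never engages with them.
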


Addition of pictures is given by disjoint union. Deformations are given by concordances of edges with the same label and cancellation of inverse relations:
\begin{center}
\begin{tikzpicture}[scale=.7]

\coordinate (A) at (3,0);
\begin{scope}
\draw[thick,<-] (0,2)..controls (1,1) and (2,.3)..(3,0.1);
\draw[thick,<-] (0,1)..controls (1,.5) and (2,.1)..(3,0);
\draw[thick,->] (0,-.9)..controls (1,-.4) and (2,-.1)..(3,-.1);
\draw (0,2) node[left]{$x$};
\draw (0,1) node[left]{$y$};
\draw (0,-1) node[left]{$z$};
\draw (3,0) node[right]{$\ast$};
\draw(3.2,-.2) node[below]{$y$};
\end{scope}
\begin{scope}[xscale=-1,xshift=-9cm]
\draw[thick,->] (0,2)..controls (1,1) and (2,.3)..(3,0.1);
\draw[thick,->] (0,1)..controls (1,.5) and (2,.1)..(3,0);
\draw[thick,<-] (0,-.9)..controls (1,-.4) and (2,-.1)..(3,-.1);
\draw (0,2) node[right]{$x$};
\draw (0,1) node[right]{$y$};
\draw (0,-1) node[right]{$z$};
\draw (3,0) node[left]{$\ast$};
\draw (3.1,-.1) node[below]{$y^{-1}$};
\end{scope}
\draw (11.5,0) node{$\Leftrightarrow$};
\begin{scope}[xshift=14cm]
\draw[thick,<-] (0,2)..controls (1,.5) and (5,.5)..(6,2);
\draw[thick,<-] (0,1)..controls (1,.1) and (5,.1)..(6,1);
\draw[thick,->] (0,-.9)..controls (1,0) and (5,0)..(6,-.9);
\draw (0,2) node[left]{$x$};
\draw (0,1) node[left]{$y$};
\draw (0,-1) node[left]{$z$};
\end{scope}
\end{tikzpicture}
\end{center}
There are five (series of) second order relations. Two of them are shown in Figure \ref{fig: two second order relations}. We continue using the convention that arcs are oriented to curve right and base point directions are uniquely determined.
\vs2
{
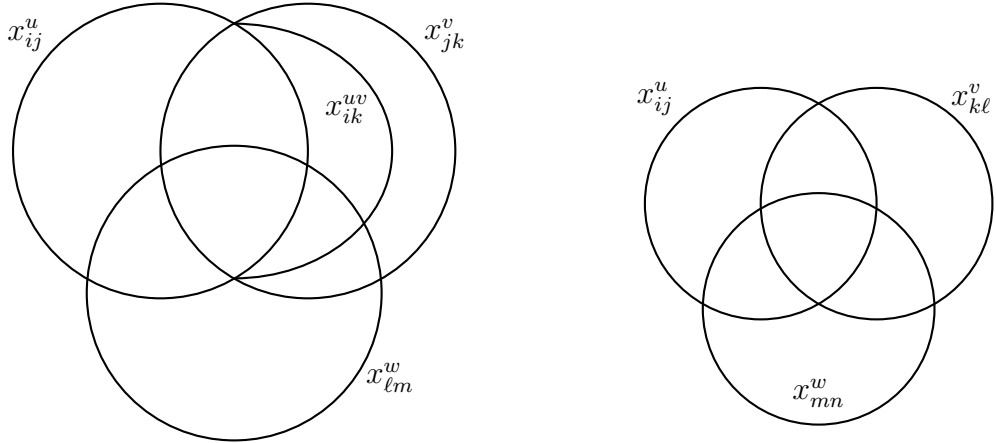
\begin{figure}[htbp]
\begin{center}
\begin{tikzpicture}[scale=1.4] 
{
\begin{scope}
\begin{scope}
\clip rectangle (2,1.3) rectangle (0,-1.3);
\draw[thick] (0,0) ellipse [x radius=1.5cm,y radius=1.21cm];
\end{scope}
\begin{scope}[xshift=.87mm]
		\draw[thick] (1.3,.4) node[left]{$x_{ik}^{uv}$}; 
\end{scope}
\begin{scope}[xshift=-.7cm]
	\draw[thick] (0,0) circle[radius=1.4cm];
		\draw (-1,1.1) node[left]{$x_{ij}^u$}; 
\end{scope}
\begin{scope}[xshift=.7cm]
	\draw[thick] (0,0) circle[radius=1.4cm];
		\draw (1,1.1) node[right]{$x_{jk}^v$}; 
\end{scope}
\begin{scope}[yshift=-1.35cm]
	\draw[thick] (0,0) circle[radius=1.4cm];
	\draw (1.15,-.8) node[right]{$x_{\ell m}^w$}; 
\end{scope}
\end{scope} 
}
{ 
\begin{scope}[xshift=5cm,yshift=-.5cm]
\draw[thick] (0,0) circle[radius=11mm];
\draw[thick] (1.1,0) circle[radius=11mm];
\draw[thick] (.55,-1) circle[radius=11mm];
\draw(-1,1) node{$x_{ij}^u$};
\draw(2,1) node{$x_{k\ell}^v$};
\draw(.55,-1.8) node{$x_{mn}^w$};
\end{scope}
}
\end{tikzpicture}
\caption{These are two ``second order Steinberg relations''. On the left we have $u,v,w\in\pi$, $i,j,k$ distinct, $\ell\neq j,k,m$ and $m\neq i,j$. We also require $x_{ik}^{uv}\neq x_{\ell m}^w$ since $[x,x]$ is not a reduced word. The figure on the right has similar constraints plus the restriction that $x_{ij}^u,x_{k\ell}^v,x_{mn}^w$ are distinct.}
\label{fig: two second order relations}
\end{center}
\end{figure}
}

{

\begin{figure}[htbp]
\begin{center}
{
\begin{tikzpicture}[scale=.6]
\coordinate (P1) at (-2,0);
\coordinate (P1p) at (-2,0.2);
\coordinate (P3) at (2,0);
\coordinate (P3p) at (2,0.2);
\coordinate (P2) at (0,3.464);
\coordinate (nP1) at (4,3.464);
\coordinate (nP3) at (-4,3.464);
\coordinate (nP2) at (0,-3.464);
\coordinate (nP1r) at (4,3.55);
\coordinate (nP3r) at (-4,3.55);
\coordinate (nP2r) at (0,-3.55);

\coordinate (S2) at (2,4);
\coordinate (S2r) at (2.2,4);
\coordinate (S3) at (1.45,-2);
\coordinate (S3r) at (1.6,-2);
\coordinate (X) at (4,0);
\coordinate (Y) at (4.5,2);

\coordinate (I2) at (3.6,1.7);
\coordinate (I2r) at (3.7,1.7);

\begin{scope}
\clip (P2) circle[radius=4cm];
\draw[thick,red] (-.85,2) circle[radius=3.464cm];
\end{scope}

\begin{scope}
\clip (P3) circle[radius=4cm];
\draw[thick] (1.15,1.464) circle[radius=3.464cm];
\end{scope}

\draw[thick] (S3)..controls (X) and (Y)..(S2);

\draw[thick,blue] (P1) circle[radius=4cm];
\draw[thick] (P2) circle[radius=4cm];
\draw[thick] (P3) circle[radius=4cm];

\draw[blue] (-6,0) node[left]{$x_{12}^u$};
\draw (6,0) node[right]{\small$x_{34}^w$};
\draw (4,6) node{\small$x_{23}^v$};
\draw (2.5,-1.9) node[right]{\small$x_{24}^{vw}$};
\draw (2.5,0) node[right]{\tiny$x_{14}^{uvw}$};
\draw[red] (-.2,5.9) node{$x_{13}^{uv}$};

\end{tikzpicture}
}
\caption{This is one more second order relation which will be used later. The colors will be explained.}
\label{Fig: A3 picture with xij}
\end{center}
\end{figure}
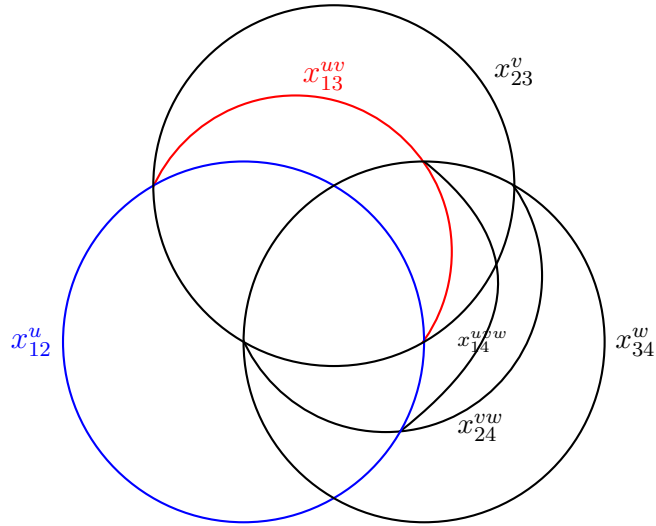

}

In 2001, Kent Orr and the first author in \cite{IgOrr} solved a conjecture about Milnor's $\overline\mu$ invariants using pictures and the homology of torsion-free nilpotent group. Such a group $N$ is a discrete subgroup of its Malcev completion which is a Euclidean space. So, the quotient is a closed manifold which is also a $K(\pi,1)$ for $\pi=N$. Igusa and Orr needed to compute the homology of these groups and they used pictures for degree 3 homology and higher dimensional pictures to compute homology in higher degrees. They used the terms ``atoms'' and ``molecules'' to describe how these pictures were made up of smaller pieces. A later work of the current two authors \cite{IT14} uses a representation theoretic version of this idea to obtain a characterization of maximal green sequences in terms of the picture monoid, i.e., they are the words in the picture monoid with product equal to the Coxeter element.

In 1993, John Klein and the first author used pictures in \cite{IgKlein} to make the first explicit computation of the higher Reidemeister torsion.

We summarize this section by saying that elements of the third homology of groups can be represented by 2-dimensional pictures and that, with this idea, pictures were used to obtain many new results in algebraic topology.

\section{Cluster categories and cluster tilting objects}\label{sec 2: cluster categories} 

{
In the year 2000, Fomin and Zelevinsky introduced the concept of ``cluster algebras'' and used this idea to solve many problems in combinatorics \cite{FZ1},\cite{FZ2},\cite{FZ4}. With four coauthors, the second author was able to ``categorify'' cluster algebras by constructing a new triangulated category which they called the ``cluster category''. This is was constructed as an orbit category of the derived category of modules over an hereditary algebra. The fundamental domain of this orbit category was the full subcategory given by $mod\text-\Lambda$ and the shifted projective modules $P_i[1]$.

Clusters in the cluster categories correspond to cluster-tilting objects in a cluster category and, for finite type, we will see that this gives a triangulation of the unit sphere $S^{n-1}$ in $\RR^n$. 
}
\vs2

{
For $\Lambda$ a hereditary algebra of rank $n$, a tilting module is a rigid module $T$ which is the direct sum of $n$ exceptional modules $T=\oplus T_i$. For $\Lambda$ of finite representation type, the dimension vectors of the exceptional modules form the vertices of a triangulation of the $(n-1)$-simplex $\Delta^{n-1}$ where the $n$ vertices of each $n-1$ dimensional simplicies of the triangulation give the tilting modules.
\begin{eg}\label{eg: A3 tilting modules}
Take $\Lambda=KQ$, the path algebra of the quiver $Q:1\ot 2\ot 3$. This has six exceptional modules $S_1,S_2,S_3, P_2,P_3,I_2$ with dimension vectors $(1,0,0), (0,1,0), (0,0,1),(1,1,0), (1,1,1), (0,1,1)$.\vs2

\begin{minipage}{0.4\textwidth}
\begin{center}
\begin{tikzpicture}
\coordinate (S1) at (-2,0);
\coordinate (S2) at (0,3);
\coordinate (S3) at (2,0);
\coordinate (P2) at (-1,1.5);
\coordinate (I2) at (1,1.5);
\coordinate (P3) at (0,1);
\draw (S1)--(I2);
\draw (S3)--(P2);
\draw[thick] (S2)--(S3)--(S1)--(S2);
\draw (S2)--(P3);
\draw (S1) node[left]{\small$S_1$ 100};
\draw (P2) node[left]{\small$P_2$ 110};
\draw (I2) node[right]{\small011 $I_2$};
\draw (S2) node[right]{\small010 $S_2$};
\draw (S3) node[right]{\small 001 $S_3$};
\draw (P3) node[below]{\small $111$};
\draw (P3) node[right]{\tiny$P_3$};
	\draw[fill] (S1) circle[radius=2pt];
	\draw[fill] (S2) circle[radius=2pt];
	\draw[fill] (S3) circle[radius=2pt];
	\draw[fill] (P2) circle[radius=2pt];
	\draw[fill] (I2) circle[radius=2pt];
	\draw[fill] (P3) circle[radius=2pt];
\end{tikzpicture}
\end{center}

\end{minipage}
\hfill 
\begin{minipage}{0.45\textwidth}
The vertices of the five triangles give the 5 tilting modules:

\quad $S_1\oplus P_2\oplus P_3$

\quad$S_2\oplus P_2\oplus P_3$

\quad$S_2\oplus I_2\oplus P_3$

\quad$S_3\oplus I_2\oplus P_3$

\quad$S_3\oplus S_1\oplus P_3$

\end{minipage}%

\end{eg}
} 

{
A cluster-tilting object is $T\oplus P[1]$ where $T$ is a rigid module with, say, $k$ exceptional components $T=T_1\oplus \cdots\oplus T_k$ and $P$ is a projective module with $n-k$ components so that $\Hom_\Lambda(P,T)=0$. Using that $\Ext^1_\Lambda(P[1],M)=\Hom_\Lambda(P,M)$, we see that $T\oplus P[1]$ is rigid, i.e., its components are Ext-orthogonal. For $\Lambda$ of finite representation type, the dimension vectors of the exceptional modules form the vertices of a triangulation of $\partial C(n)$, the boundary of the $n$-dimensional octahedron $C(n)$ which is the convex hull of the unit vectors in $\RR^n$ and their negative. The vertices of $C(n)$ are the $n$ unit vectors which are the dimension vectors of the simple modules $S_i$ and we put the shifted projective modules $P_i[1]$ at $-e_i$, the negative unit vectors. 

However, the boundary of $C(n)$ is triangulated with more vertices and more simplices. The $n-1$-simplices of the triangulation of $\partial C(n)$ give the cluster-tilting objects. Note that $\partial C(n)$ is homeomorphic to $S^{n-1}$, the unit sphere in $\RR^n$.

\begin{eg}\label{eg: A3 cluster tilting objects}
We continue with the path algebra of $Q:1\ot 2\ot 3$. This has 14 cluster tilting objects including the 5 tilting modules.\vs2

\begin{minipage}{0.4\textwidth}
\begin{center}
\begin{tikzpicture}[scale=.7]
\coordinate (S1) at (-2,0);
\coordinate (S2) at (0,3);
\coordinate (S3) at (2,0);
\coordinate (P2) at (-1,1.5);
\coordinate (I2) at (1,1.5);
\coordinate (P3) at (0,1);
\coordinate (nP1) at (3,2.5);
\coordinate (nP3) at (-3,2.5);
\coordinate (nP2) at (0,-2);
\draw (nP1)--(S1) (nP3)--(S3);
\draw[thick,dashed] (nP3)--(nP1)--(nP2)--(nP3);
\draw[thick] (nP3)--(S1)--(nP2)--(S3)--(nP1)--(S2)--(nP3);
\draw[thick] (S2)--(S3)--(S1)--(S2);
\draw (S2)--(P3);
\draw (S1) node[left]{$S_1$};
\draw (P2) node[left]{$P_2$};
\draw (I2) node[right]{$I_2$};
\draw (S2) node[right]{ $S_2$};
\draw (S3) node[right]{ $S_3$};
\draw (P3) node[below]{$P_3$};
\draw (nP1) node[right]{$P_1[1]$};
\draw (nP3) node[left]{$P_3[1]$};
\draw (nP2) node[right]{$P_2[1]$};
	\draw[fill] (S1) circle[radius=2pt];
	\draw[fill] (S2) circle[radius=2pt];
	\draw[fill] (S3) circle[radius=2pt];
	\draw[fill] (P2) circle[radius=2pt];
	\draw[fill] (P3) circle[radius=2pt];
	\draw[fill] (I2) circle[radius=2pt];
	\draw[fill] (nP3) circle[radius=2pt];
	\draw[fill] (nP2) circle[radius=2pt];
	\draw[fill] (nP1) circle[radius=2pt];
\end{tikzpicture}
\end{center}

\end{minipage}
\hfill 
\begin{minipage}{0.45\textwidth}
\,\vs2
Nine new triangles giving 9 cluster-tilting objects:\vs1

 $S_1\oplus P_2\oplus P_3[1]$\quad $P_1[1]\oplus P_2[1]\oplus P_3[1]$

$S_2\oplus P_2\oplus P_3[1]$\quad $S_1[1]\oplus P_2[1]\oplus P_3[1]$

$S_2\oplus I_2\oplus P_1[1]$\,\quad $P_1[1]\oplus S_2\oplus P_3[1]$

$S_3\oplus I_2\oplus P_1[1]$\,\quad $P_1[1]\oplus P_2[1]\oplus S_3$

$S_3\oplus S_1\oplus P_2[1]$

\end{minipage}%

\end{eg}
} 

The general result is the following.

\begin{thm}\label{thm: cluster-tilting triangulation}
For any hereditary algebra $\Lambda$ of finite representation type having $n$ simple modules, there is a triangulation of the unit sphere $S^{n-1}$ with vertices corresponding to the indecomposable objects of the cluster category and so that $n$ vertices span a top dimensional simplex in this triangulation if and only if they form a cluster tilting object, i.e., if the vertices are pairwise Ext-orthogonal.
\end{thm}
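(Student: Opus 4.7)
The plan is to construct the triangulation explicitly by mapping indecomposables of $\cC_\Lambda$ to nonzero vectors in $\RR^n$ and verifying the resulting spherical simplices tile $S^{n-1}$. Define a map $\nu$ on the indecomposables as in the paragraphs preceding the theorem: send an indecomposable module $M$ to its dimension vector $\undim(M)$, and send $P_i[1]$ to $-e_i$, so that the $2n$ vertices of the octahedron $C(n)$ are the images of $S_1,\ldots,S_n,P_1[1],\ldots,P_n[1]$. Radial projection $X\mapsto \nu(X)/\|\nu(X)\|$ sends each $\nu(X)$ to a point $\bar\nu(X)\in\partial C(n)\cong S^{n-1}$. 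For each cluster-tilting object $T=T_1\oplus\cdots\oplus T_n$, let $\sigma(T)$ be the spherical simplex spanned by $\bar\nu(T_1),\ldots,\bar\nu(T_n)$.

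The first check is that $\sigma(T)$ is a genuine $(n-1)$-simplex, i.e.\ that $\nu(T_1),\ldots,\nu(T_n)$ are linearly independent in $\RR^n$. This follows because the summands of a cluster-tilting object form a $\ZZ$-basis of the Grothendieck group $K_0(\cC_\Lambda)\cong\ZZ^n$, a standard consequence of rigidity plus the count of $n$ indecomposable summands. The next, central combinatorial input is the cluster mutation theorem: every ``almost complete'' cluster-tilting object $T_1\oplus\cdots\oplus T_{n-1}$ admits exactly two completions to a cluster-tilting object. In the spherical picture this says every codimension-one face of the simplicial complex $\{\sigma(T)\}$ lies in exactly two top-dimensional simplices, and combined with connectedness of the cluster mutation graph, this makes $\bigcup_T \sigma(T)$ a closed, connected pseudomanifold of dimension $n-1$ inside $S^{n-1}$.

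To upgrade this pseudomanifold to a triangulation of the whole sphere, I proceed by induction on $n$. For each indecomposable $X\in\cC_\Lambda$, the link of $\bar\nu(X)$ in the complex parametrizes cluster-tilting objects having $X$ as a direct summand; these correspond bijectively to cluster-tilting objects of the Hom-Ext perpendicular category of $X$ inside $\cC_\Lambda$, which is itself a Dynkin cluster category of rank $n-1$. By the inductive hypothesis the link is a triangulation of $S^{n-2}$, so the star of every vertex is an embedded closed disk. Therefore the complex is a closed topological $(n-1)$-manifold contained in the connected manifold $S^{n-1}$ and must equal it. The base cases $n=1,2$ are immediate: for $n=1$ one has the $0$-sphere $\{\nu(S_1),\nu(P_1[1])\}=\{e_1,-e_1\}$, and for $n=2$ the cluster-tilting objects give the edges of a polygon inscribed in $S^1$.

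The main obstacle is this inductive step: one must verify that Hom-Ext perpendicular categories in the cluster category are again Dynkin cluster categories of one lower rank, and that their cluster-tilting objects correspond exactly to the link of $\bar\nu(X)$. This rests on the compatibility of cluster mutation with perpendicular subcategories developed in the Buan-Marsh-Reineke-Reiten-Todorov construction of $\cC_\Lambda$, together with the classical fact (Geigle-Lenzing) that $X^{\perp_{01}}$ in $\mathrm{mod}\text-\Lambda$ is again the module category of a hereditary algebra of rank $n-1$ for $X$ a module, while for a shifted projective $P_i[1]$ the perpendicular reduces to the cluster category of the quiver obtained by deleting vertex $i$.
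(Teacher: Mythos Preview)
The paper does not give a proof of this theorem; immediately after the statement it says only ``This follows from the results of \cite{BMRRT}.'' So there is no argument in the paper to compare your proposal against, and your write-up is considerably more detailed than what the paper offers.

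Your outline assembles the right ingredients --- the $K_0$-basis property for linear independence of the $\nu(T_i)$, the two-completions theorem for the codimension-one incidence structure, and the Schofield/Geigle--Lenzing description of $X^{\perp_{01}}$ for the inductive reduction of the link. There is, however, a genuine gap in the step ``the star of every vertex is an embedded closed disk\ldots\ the complex is a closed topological $(n-1)$-manifold contained in $S^{n-1}$.'' Your induction shows that the \emph{abstract} link of each vertex is an $(n-2)$-sphere, hence that the abstract simplicial complex is a closed $(n-1)$-manifold. It does not show that the radial-projection map from that abstract complex to $S^{n-1}$ is injective. Nothing you wrote excludes two top simplices $\sigma(T)$ and $\sigma(T')$ overlapping in their interiors, and if that happened the image would not be ``contained in $S^{n-1}$'' in the sense you need, nor would it be a triangulation.

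The usual repair is to show the map is a local homeomorphism, hence a covering of $S^{n-1}$, and then use simple connectivity for $n\ge 3$ (with $n\le 2$ handled by hand). Local injectivity across a codimension-one face comes down to checking that the two completions $T_n,T_n'$ of an almost-complete object land on opposite sides of the hyperplane through $\nu(T_1),\ldots,\nu(T_{n-1})$. This follows from the exchange triangles once one knows $\nu(T_n)+\nu(T_n')$ lies in that hyperplane; but note that with your convention $\nu(P_i[1])=-e_i$ this is \emph{not} the Grothendieck-group identity $[T_n]+[T_n']=[B]$ (since $[P_i[1]]=-\undim P_i\neq -e_i$ in general), so you must either verify it separately for the mixed cases or observe that your $\nu$ is exactly Fomin--Zelevinsky's almost-positive-root labelling, for which the completeness of the cluster fan is their generalized-associahedron theorem.
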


This follows from the results of \cite{BMRRT}. However, Jerzy Wayman realized that we can say more: The walls of this triangulation are domains of semi-invariants.

\section{Semi-invariant pictures}\label{sec 3: semi-invariant pictures}

{
 A semi-invariant comes from the action of a group $G$ on a vector space $X$ over a field $K$ and it is associated to a character $\chi:G\to K^\times$ in the following sense. A map $\varphi:X\to K$ is a \emph{semi-invariant with associated character $\chi$} if
\[
	\varphi(g\cdot x)=\chi(g)\varphi(x)
\] 
for all $g\in G,x\in X$.

The canonical example is $X=\Hom_K(K^n,K^n)$ and $G=GL_n(K)\times GL_n(K)$ acting on $X$ by
\[
	(g_0,g_1)(h)=g_0\circ h\circ g_1^{-1}.
\]
Then $\varphi(h)=\det(h)$ is a semi-invariant on $X$ with associated character $\chi$ given by
\[
	\chi(g_0,g_1)=\det g_0\det g_1^{-1}.
\]
Since all characters on $GL_n(K)$ are ``determinantal'', i.e. integer powers of the determinant, all characters on $G$ are given by
\[
	\chi_{ij}(g_1,g_2)=(\det g_1)^i(\det g_2)^{j}.
\]
The weight of $\chi_{ij}$ is defined to be $(i,j)\in\ZZ^2$. So, the weight of $\det:\Hom_K(K^n,K^n)\to K$ is $(1,-1)$. (See \cite[Appendix B]{IOTW2} for discussion of nondeterminantal weights.)
}

{
When we pass from injective resolution of the target to projective resolution of the source, we go from semi-invariant conditions on the positive simplex to stability conditions on the entire plane.
We take two semi-invariants on the quiver $Q:1\ot 2\ot 3$ as example. 

\begin{eg}\label{eg: A3 injective semi-invariants}
Take a $KQ$-module $V$ considered as a representation of the quiver $Q:1\ot 2\ot 3$:
\[
	V_1\xleftarrow f V_2\xleftarrow g V_3
\] with dimension vector $\undim V=(v_1,v_2,v_3)$ where $v_i=\dim V_i$. We consider two semi-invariants on the representation space $\Hom_K(V_2,V_1)\times \Hom_K(V_3,V_2)$.
\begin{enumerate}
\item $\varphi(f,g)=\det f$ with $wt(\varphi)=(1,-1,0)$ and
\item $\psi(f,g)=\det(fg)$ with $wt(\psi)=(1,0,-1)$.
\end{enumerate}
These semi-invariants are not always defined. The set of dimension vectors of $V$ for which these semi-invariants are defined and nonzero form the integer points of a closed convex set which we call the \emph{domain} of the semi-invariants.
These domains are indicated in Figure \ref{fig: injective domains}. Conversion to ``projective'' semi-invariants goes like this. 

(1) $\varphi(V)$ is defined when $\Hom_\Lambda(V,I_1)\cong \Hom_\Lambda(V,I_2)$. Since $I_1\to I_2$ gives an injective co-resolution of $S_1$, this is equivalent to $\Hom_\Lambda(V,S_1)=0=\Ext^1_\Lambda(V,S_1)$. Convert to a projective presentation $Q\to P\to V$ of $V$. We get the condition $\Hom_\Lambda(P,S_1)=\Hom_\Lambda(Q,S_1)$. Equivalently, $g\cdot\undim S_1=0$ where $g$ is the $g$-vector of $V$ ($g(V)=\undim P/rad\,P-\undim Q/rad\,Q$). In terms of $g$-vectors, 
\[
D(S_1)=\{g\in\RR^n\,|\,g\cdot\undim S_1=0\}
\]
is the domain of the projective semi-invariant corresponding to $\varphi$.

(2) $\psi(V)$ is defined when $hom(V,I_1)=hom(V,I_3)\le hom(V,I_2)$ where $hom(X,Y)$ is the dimension of $\Hom_\Lambda(X,Y)$. By the injective copresentations $P_2\to I_1\to I_3$ and $S_1\to I_1\to I_2$, this is equivalent to $\Hom_\Lambda(V,P_2)=0=\Ext^1_\Lambda(V,P_2)$ and $\Hom_\Lambda(V,S_1)=0$. Converting to a projective presentation $Q\to P\to V$, these conditions are $hom(Q,P_2)=hom(P,P_2)$ and $hom(P,S_1)\le hom(Q,S_1)$ or, equivalently, $g\cdot\undim P_2=0$ and $g\cdot \undim S_1\le0$ where $g$ is the $g$-vector of $V$. Thus
\[
	D(P_2)=\{g\in\RR^n\,|\,g\cdot\undim P_2=0,\ g\cdot\undim S_1\le0\}
\]
is the domain of the projective semi-invariant corresponding to $\psi$.
\end{eg} 
} 

{
\begin{figure}[htbp]
\begin{center}
\begin{tikzpicture}

\coordinate (DF) at (1,0.5);
\coordinate (DC) at (-.1,1.7);

\draw[blue] (DF) node[above]{\tiny$D(\varphi)$};
\draw[red] (DC) node[right]{\tiny$D(\psi)$};

\coordinate (S1) at (-2,0);
\coordinate (S2) at (0,3);
\coordinate (S3) at (2,0);
\coordinate (P2) at (-1,1.5);
\coordinate (I2) at (1,1.5);
\coordinate (P3) at (0,1);
\draw (S1)--(I2);
\draw[very thick,blue] (S3)--(P2);
\draw (S2)--(S3)--(S1)--(S2);
\draw[very thick,red] (S2)--(P3);
\draw (S1) node[left]{\small$S_1$};
\draw (P2) node[left]{\small $P_2$};
\draw (I2) node[right]{\small $I_2$};
\draw (S2) node[right]{\small $S_2$ };
\draw (S3) node[right]{\small $S_3$ };
\draw (P3) node[below]{\small $P_3$};
	\draw[fill] (S1) circle[radius=2pt];
	\draw[fill] (S2) circle[radius=2pt];
	\draw[fill] (S3) circle[radius=2pt];
	\draw[fill] (P2) circle[radius=2pt];
	\draw[fill] (I2) circle[radius=2pt];
	\draw[fill] (P3) circle[radius=2pt];
\end{tikzpicture}
\caption{The domain {\color{blue}$D(\varphi)$ is shown in blue}. These are the points with coordinates $(x,x,y)$. {\color{red}$D(\psi)$ is in red.} These are the points $(x,y,x)$ where $x\le y$. The figure shows the normalized dimension vectors: normalized by dividing by the sum $\sum v_i$.}
\label{fig: injective domains}
\end{center}
\end{figure}
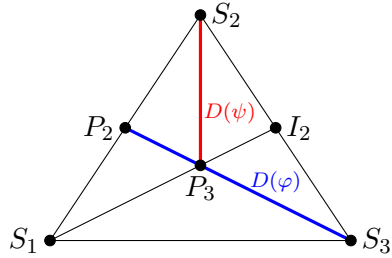
} 

\begin{figure}[htbp]
\begin{center}
\begin{tikzpicture}[scale=.6]
\coordinate (P1) at (-2,0);
\coordinate (P1p) at (-2,0.2);
\coordinate (P3) at (2,0);
\coordinate (P3p) at (2,0.2);
\coordinate (P2) at (0,3.464);
\coordinate (nP1) at (4,3.464);
\coordinate (nP3) at (-4,3.464);
\coordinate (nP2) at (0,-3.464);
\coordinate (nP1r) at (4,3.55);
\coordinate (nP3r) at (-4,3.55);
\coordinate (nP2r) at (0,-3.55);

\coordinate (S2) at (2,4);
\coordinate (S2r) at (2.2,4);
\coordinate (S3) at (1.45,-2);
\coordinate (S3r) at (1.6,-2);
\coordinate (X) at (4,0);
\coordinate (Y) at (4.5,2);

\coordinate (I2) at (3.6,1.7);
\coordinate (I2r) at (3.7,1.7);

\draw (S2r) node[above]{\small $S_2$};
\draw (S3r) node[below]{\small $S_3$};
\draw (I2) node[right]{\small $I_2$};

\begin{scope}
\clip (P2) circle[radius=4cm];
\draw[very thick,red] (-.85,2) circle[radius=3.464cm];
\draw[red] (-.2,5.9) node{$D(P_2)$};
\end{scope}

\begin{scope}
\clip (P3) circle[radius=4cm];
\draw[thick] (1.15,1.464) circle[radius=3.464cm];
\end{scope}

\draw[thick] (S3)..controls (X) and (Y)..(S2);

\foreach \x in {P1,P2,P3,nP1,nP2,nP3,S2,S3,I2}
\draw[fill] (\x) circle[radius=1mm];
\draw[very thick,blue] (P1) circle[radius=4cm];
\draw[blue] (-6,0) node[left]{$D(S_1)$};
\draw[thick] (P2) circle[radius=4cm];
\draw[thick] (P3) circle[radius=4cm];
\draw (P1p) node[right]{\small$S_1$};
\draw (P3p) node[left]{\small$P_3$};
\draw (P2) node[below]{\small$P_2$};

\draw (nP2r) node[below]{\small$P_2[1]$};
\draw (nP1r) node[right]{\small$P_1[1]$};
\draw (nP3r) node[left]{\small$P_3[1]$};

\draw (6,0) node[right]{\small$D(S_3)$};
\draw (4.2,6) node{\small$D(S_2)$};
\draw (2.5,-1.9) node[right]{\small$D(I_2)$};
\draw (2.5,0) node[right]{\tiny$D(P_3)$};

\end{tikzpicture}
\caption{This is the $g$-vector fan \cite{FZ4, PPPP} for $A_3$ in $\RR^3$ intersected with the unit sphere $S^2$ and stereographically projected to the plane. Thus, hyperplanes, such as $D(S_1)$ become circles in this figure. The half-plane $D(P_2)$ becomes a semi-circle. This figure is homeomorphic to the figure in Example \ref{eg: A3 cluster tilting objects}. We call this the \emph{semi-invariant picture} for $A_3$. This is also the representation theoretic version of the ``second order Steinberg relation'' shown in Figure \ref{Fig: A3 picture with xij}.}
\label{fig: projective si domains}
\end{center}
\end{figure}
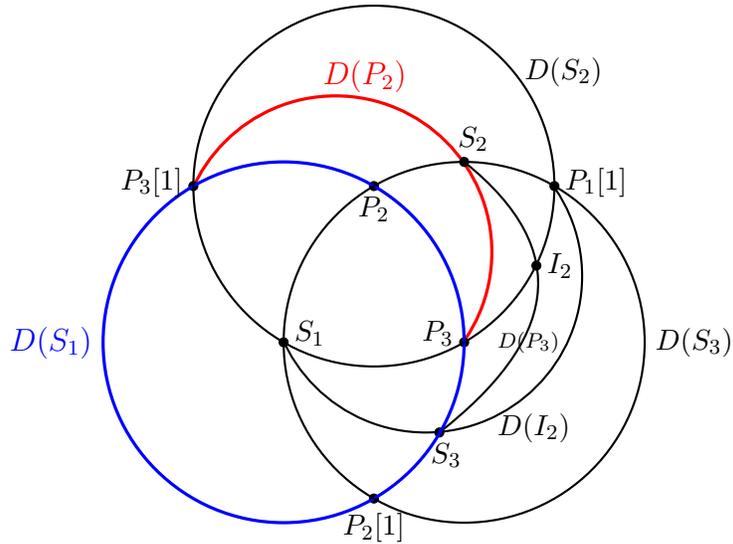

The formula for the $g$-vector \emph{semi-invariant domains} are:
\begin{equation}\label{eq: def of D(M)}
        D(M)=\{g\in\RR^n\,|\, g\cdot \undim M=0,\ g\cdot \undim M'\le0\ \forall M'\subset M\}
\end{equation}
for all exceptional modules $M$. For example, $D(P_3)$ is the set of all $g$ so that $g_1+g_2+g_3=0$, $g_1+g_2\le0$ and $g_1\le0$ since $P_2,S_1$ are the submodules of $P_3$.

{
In general (finite type) we have the following.
\begin{thm}\cite{IOTW}\label{thm: semi-invariant triangulation of sphere}
In the triangulation of $S^{n-1}$ given by Theorem \ref{thm: cluster-tilting triangulation}, the $n-2$ skeleton is the union of all semi-invariant domains $D(M)$. In particular, every $n-2$ simplex is contained in some $D(M)$ defined in \eqref{eq: def of D(M)} above.
\end{thm}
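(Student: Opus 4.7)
The plan is to prove the two inclusions separately: every codimension-one face lies in some $D(M)$, and every $D(M)$ is a union of such faces. Both directions hinge on the Hom-Ext perpendicular category $\cW_M=M^{\perp_{01}}$ of a single exceptional module $M$.

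For the forward direction, let $F$ be an $(n-2)$-face of the triangulation spanned by an almost complete cluster tilting object $T_1,\ldots,T_{n-1}$. Cluster exchange \cite{BMRRT} provides exactly two completions $T_n$ and $T_n^*$, and a quick argument shows at most one is a shifted projective: if both were $P_\ell[1]$ and $P_{\ell^*}[1]$ with $\ell\ne\ell^*$, then since any two shifted projectives coexist inside the all-shifted cluster tilting object and so are Ext-orthogonal in the cluster category, the collection $T_1,\ldots,T_{n-1},P_\ell[1],P_{\ell^*}[1]$ would be rigid of size $n+1$, contradicting maximality. Hence at least one completion is an exceptional module $M$, and every $T_i$ is Ext-orthogonal to $M$ in the cluster category. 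Translating to the module category (using $g(P[1])=-[P]$ when $T_i=P[1]$, and a projective presentation $P_1\to P_0\to T_i$ otherwise) shows
\[
g(T_i)\cdot\undim M=\dim\Hom(T_i,M)-\dim\Ext^1(T_i,M)=0.
\]
For a submodule $M'\subsetneq M$, the exact sequence $0\to M'\to M\to M/M'\to 0$ combined with $T_i\in M^{\perp_{01}}$ forces $\Hom(T_i,M')=0$ and $\Ext^1(T_i,M')=\Hom(T_i,M/M')$, so $g(T_i)\cdot\undim M'=-\dim\Hom(T_i,M/M')\le 0$. Thus $g(T_i)\in D(M)$ for each $i$ and hence $F\subset D(M)$.

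For the reverse inclusion, fix an exceptional module $M$. As mentioned in the introduction, $\cW_M$ is equivalent to $mod\text-\Lambda'$ for a hereditary algebra $\Lambda'$ of rank $n-1$, and it inherits finite representation type. Applying Theorem \ref{thm: cluster-tilting triangulation} to $\Lambda'$ produces a triangulation of $S^{n-2}$ by the cluster tilting objects of $\cW_M$. The inclusion of $g$-vector spaces embeds this triangulation into $\RR^n$, landing in the hyperplane $\{g\cdot\undim M=0\}$ and, by the Hom-Ext computation of the previous paragraph applied inside $\cW_M$, inside $D(M)$. Conversely, adjoining $M$ to any cluster tilting object of $\cW_M$ yields a cluster tilting object of the ambient cluster category, so these $(n-2)$-simplices are genuine codimension-one faces of the triangulation from Theorem \ref{thm: cluster-tilting triangulation}. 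This realizes $D(M)$ as a union of $(n-2)$-faces.

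The main obstacle is the cluster-mutation step that singles out $M$ for a given almost complete cluster tilting object; equivalently, one must establish the bijection between walls of the $g$-vector fan and exceptional modules. The subtle case occurs when $T_1,\ldots,T_{n-1}$ contains many shifted projectives, for which the relevant $M$ is typically a simple module; reconciling this with the exchange triangle $T_n\to B\to T_n^*$ requires either the semi-invariant techniques of \cite{IOTW} or an induction on the number of shifted-projective components. Once the correspondence $F\leftrightarrow M$ is in hand, the Hom-Ext computation and the perpendicular-category reduction complete the proof.
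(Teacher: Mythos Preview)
The paper does not prove this statement; it is quoted from \cite{IOTW} without argument, so there is no in-paper proof to compare against. What follows concerns only the internal logic of your proposal.

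The forward direction contains a real error. Having chosen a module completion $M$ of the almost-complete cluster $T_1,\dots,T_{n-1}$, you write
\[
g(T_i)\cdot\undim M=\dim\Hom_\Lambda(T_i,M)-\dim\Ext^1_\Lambda(T_i,M)=0.
\]
Ext-orthogonality in the cluster category forces $\Ext^1_\Lambda(T_i,M)=0$, but it says nothing about $\Hom_\Lambda(T_i,M)$, which is usually nonzero between summands of a tilting object. Concretely, for $Q:1\leftarrow 2\leftarrow 3$ take $T_1=P_1=S_1$ and $T_2=P_3$: the two completions are $P_2$ and $S_3$, yet the wall through $g(P_1),g(P_3)$ is $D(S_2)$, and $S_2$ is \emph{neither} completion (indeed $\Ext^1_\Lambda(S_2,S_1)\neq0$, so $S_2$ cannot even be adjoined). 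The brick labelling a wall is the unique simple object of the rank-one wide subcategory $(T_1\oplus\cdots\oplus T_{n-1})^{\perp_{01}}$, exactly as Lemma~\ref{lem: W(T) = J(T)} and Theorem~\ref{thm: W(r) is right perp of T} dictate; it is produced from a completion by a Bongartz-type modification, not by taking the completion itself.

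The reverse direction has the dual problem. Membership $N\in\cW_M=M^{\perp_{01}}$ gives $\Hom_\Lambda(M,N)=\Ext^1_\Lambda(M,N)=0$ but says nothing about $\Ext^1_\Lambda(N,M)$, so ``adjoining $M$'' to a cluster-tilting object of $\cW_M$ need not produce a rigid object of the ambient category. For $Q:1\leftarrow 2$ with $M=S_1$ one has $\cW_M=\add(S_2)$, and $S_1\oplus S_2$ is not rigid; the actual vertices on $D(S_1)$ are $P_2$ and $P_2[1]$, neither of which lies in $\cW_M$. The faces on $D(M)$ \emph{are} parametrised by cluster-tilting objects of $\cW_M$, but through the nontrivial bijection alluded to in Remark~\ref{rem: key construction} (components are corrected by $\add M$-approximations), not by literal inclusion. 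Establishing that bijection, and the resulting identification of $D(M)$ with the cluster fan of $\cW_M$, is the substantive content supplied by \cite{IOTW}.
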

}

\section{Picture groups and picture spaces}\label{sec 4: picture group} 

{Given a picture $P$ for some group $G=\left<\cX\,|\,\cY\right>$, the \emph{picture group} of $P$ is the group $G(P)=\left<\cX_0\,|\,\cY_0\right>$ where $\cX_0\subset \cX$ is the set of edge labels which actually occur in the picture $P$ and $\cY_0\subset \cY$ is the set of relations which label the vertices of $P$.
}
{
\begin{eg}\label{eg: examples of picture groups}
\begin{enumerate}
\item The picture group for Example \ref{eg: Z3 picture} is the group $\ZZ_3=\left<x\,|\,x^3\right>$.
\item For Example \ref{eg: 3 circles}, the picture group is $\ZZ^3$ since all three generators and all three commutator relations are in the picture.
\item The picture group for the left hand picture in Figure \ref{fig: two second order relations} is 
\[
	G(P)=\left< x,y,z,w\,|\, [x,w], [y,w], [z,w], [x,y]z^{-1}\right>=\left< x,y,w\,|\, [x,w], [y,w]\right>=F_2\times \ZZ.
\]
\item The picture group for Figure \ref{fig: projective si domains} is the ``Stasheff group'' $G(A_3)$ from Definition \ref{def: Stasheff gp} below.
\item There is an exceptional case of left hand picture in Figure \ref{fig: two second order relations} when $x_{ik}^{uv}=x_{\ell m}^w$. This is the Heisenberg group shown in Figure \ref{fig: Heisenberg group}. It is one of the examples considered in \cite{IgOrr}.
\end{enumerate} 
\end{eg}
}

{
\begin{figure}[htbp]
\begin{center}
\begin{tikzpicture}
{
\begin{scope}
\clip rectangle (2,1.3) rectangle (0,-1.3);
\draw[thick] (0,0) ellipse [x radius=1.5cm,y radius=1.21cm];
\end{scope}
\begin{scope}[xshift=.87mm]
		\draw[thick] (1.3,.4) node[left]{$z$}; 
\end{scope}
\begin{scope}[xshift=-.7cm]
	\draw[thick] (0,0) circle[radius=1.4cm];
		\draw (-1,1.1) node[left]{$x$}; 
\end{scope}
\begin{scope}[xshift=.7cm]
	\draw[thick] (0,0) circle[radius=1.4cm];
		\draw (1,1.1) node[right]{$y$}; 
\end{scope}
\begin{scope}[yshift=-1.35cm]
	\draw[thick] (0,0) circle[radius=1.4cm];
	\draw (1.15,-.8) node[right]{$z$}; 
\end{scope}
\begin{scope}[xshift=12mm,yshift=-7mm]
\draw[fill,white] (0,0) circle[radius=4mm];
\draw[thick] (-.3,-.27)..controls (0,0) and (0,.1)..(-.21,.34);
\draw[thick] (.17,-.37)--(.23,.34);
\end{scope}
}
\end{tikzpicture}
\caption{The picture group of this picture is the Heisenberg group $H=\left< x,y,z\,|\, [x,z],[y,z],[x,y]z^{-1}\right>$. This is a torsion-free nilpotent group since $[x,y]=z$ is central making $H$ a central extension of $\ZZ$ by $\ZZ^2$. This is isomorphic to the unipotent upper triangular matrix group $U(A_2)$ discussed below.}
\label{fig: Heisenberg group}
\end{center}
\end{figure}
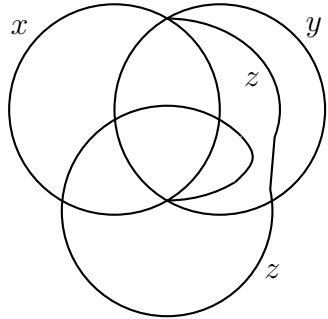
}

These are examples of picture groups of 2-dimensional pictures. For more general picture groups, we need to discuss the higher dimensional pictures given by Theorems \ref{thm: cluster-tilting triangulation} and \ref{thm: semi-invariant triangulation of sphere}. First, we describe the picture groups $G(A_n)$ of the quiver $A_n$. 

\subsection{Loday's construction}\label{ss: Loday}

For the quiver of type $A_n$ with straight orientation
\begin{equation}\label{eq: An straight}
   Q:\quad 1\ot 2\ot \cdots\ot n.
\end{equation}
the picture group was first defined by Loday who called the picture group the ``Stasheff group'' and denoted it by $Sta_{n+1}$. (Thus, $Sta_n=G(A_{n-1})$.) Loday also constructed the picture space and observed that its fundamental group is the Stasheff group.

\begin{defn}\cite{Loday}\label{def: Stasheff gp}
The \emph{Stasheff group} $Sta_n$ is the group with generators $x_{ij}$ for $1\le i<j\le n$ with relations:
\begin{enumerate}
\item $[x_{ij},x_{jk}]=x_{ik}$ for $i<j<k$ where $[a,b]:=b^{-1}aba^{-1}$ (so, if $[a,b]=c$ then $ab=bca$)
\item $[x_{ij},x_{k\ell}]=1$ if $i,j,k,\ell$ are distinct and either $(i,j)$ and $(k,\ell)$ are disjoint intervals or one is contained in the other.
\end{enumerate} 
\end{defn}

For example, $Sta_3$ has only three generators $x_{12},x_{23}$ and $x_{13}$ with only one relation $x_{13}=[x_{12},x_{23}]$. So, $Sta_3$ is the free group on two generators. Loday obtained his Stasheff group by putting labels on edges and faces of the Stasheff associahedron \cite{Stasheff} and collapsing all vertices to one vertex, identifying edges with the same labels, taking faces to be relations and identifying faces with the same relations. For example, $K_3$ is a pentagon with oriented edges labeled $12,23,13$ as shown in Figure \ref{fig: K3 with labels}.
{
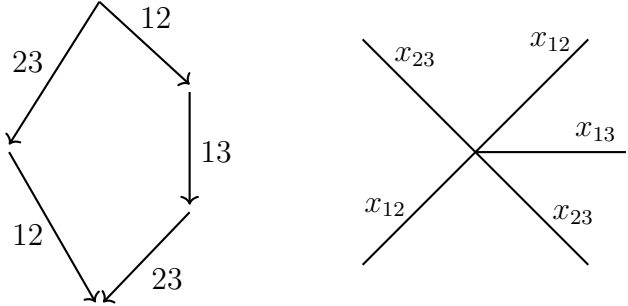
\begin{figure}[htbp]
\begin{center}
\begin{tikzpicture}
\coordinate (A) at (0,4);
\coordinate (B) at (-1.2,2);
\coordinate (AB) at (-.6,3.2);
\coordinate (BC) at (-.6,.9);
\coordinate (AD) at (.75,3.5);
\coordinate (DE) at (1.2,2);
\coordinate (EC) at (.9,.6);

\draw (AB) node[left]{23};
\draw (BC) node[left]{12};
\draw (AD) node[above]{12};
\draw (DE) node[right]{13};
\draw (EC) node[below]{23};

\coordinate (Bp) at (-1.2,2.1);
\coordinate (CL) at (-.05,0);
\coordinate (CR) at (0.05,0);
\coordinate (D) at (1.2,2.8);
\coordinate (Dp) at (1.2,2.9);
\coordinate (E) at (1.2,1.2);
\coordinate (Ep) at (1.2,1.3);
\coordinate (C) at (0,0);

\draw[thick,->] (A)--(Bp);
\draw[thick,->] (B)--(CL);
\draw[thick,->] (E)--(CR);
\draw[thick,->] (D)--(Ep);
\draw[thick,->] (A)--(Dp);

{
\begin{scope}[xshift=5cm,yshift=2cm]
  \draw[thick]  (-1.5,-1.5)--(1.5,1.5);
  \draw[thick]  (1.5,-1.5)--(-1.5,1.5);
  \draw[thick]  (0,0)--(2,0);
  \draw (1,1.2) node[above]{$x_{12}$};
  \draw (-1.2,-1) node[above]{$x_{12}$};
  \draw (1.3,-1.1) node[above]{$x_{23}$};
  \draw (-.8,1) node[above]{$x_{23}$};
  \draw (1.6,0) node[above]{$x_{13}$};
\end{scope}
}

\end{tikzpicture}
\caption{On the left, Loday composes arrows right to left to get the relation $(12)(23)=(23)(13)(12)$ which corresponds to the relation $x_{12}x_{23}=x_{23}x_{13}x_{12}$ which is an equality of row operations corresponding to the dual hand slide picture on the right.
}
\label{fig: K3 with labels}
\end{center}
\end{figure}
}

Loday takes the pentagon $K_3$ and identifies all vertices and pairs of edges with the same label giving the 1-dimensional space $X(1)=\ast\cup e^1_{12}\cup e^1_{23}\cup e^1_{13}$. The pentagon is the attaching map of a 2-cell and we get
\[
    X(1)\cup e_r^2
\]
which is our picture space $X(A_2)$. This is clearly homotopy equivalent to a wedge of two circles, so $Sta_3$ is the free group with two generators.

{
\begin{figure}[htbp]

\begin{center}
\begin{tikzpicture}
\coordinate (S1) at (-2,0);
\coordinate (S2) at (0,3);
\coordinate (S3) at (2,0);
\coordinate (P2) at (-1,1.5);
\coordinate (I2) at (1,1.5);
\coordinate (P3) at (0,1);
\coordinate (nP1) at (5,3.5);
\coordinate (nP3) at (-5,3.5);
\coordinate (nP2) at (0,-4);

\coordinate (a0) at (-4.5,-1.25);
\coordinate (a1) at (-9/4,0);
\coordinate (a2) at (0,-1);
\coordinate (a3) at (9/4,0);
\coordinate (a4) at (9/2,-5/4);
\coordinate (b1) at (-7/4,3/2);
\coordinate (b2) at (-1,1);
\coordinate (b3) at (0,1/4);
\coordinate (b4) at (1,1);
\coordinate (b5) at (7/4,3/2);
\coordinate (c1) at (-5/4,9/4);
\coordinate (c2) at (-1/2,7/4);
\coordinate (c3) at (1/2,7/4);%
\coordinate (c4) at (5/4,9/4);%
\coordinate (d1) at (0,13/4);
\coordinate (d2) at (0,5);

\draw[thick, ->] (b3)--(a2);
\draw[thick, ->]  (b5)--(a3);

\coordinate (a2b3) at (-.1,-2.8/8);
\coordinate (a3b5) at (1.9,3.2/4);

\draw (a2b3) node[right]{24};
\draw (a3b5) node[right]{24};

\coordinate (c34) at (8/8,8.3/4);
\coordinate (b45) at (12/8,5.3/4);

\draw (c34) node[below]{\tiny 14};
\draw (b45) node[below]{\tiny 14};

\draw[thick,->] (c3)--(c4);
\draw[thick,->] (b4)--(b5);

\coordinate (a12) at (-9/8,-1/2);
\coordinate (b12) at (-12/8,5/4);
\coordinate (c12) at (-8/8,2);
\coordinate (d1c4) at (6.5/8,10.5/4);
\coordinate (a34) at (27/8,-5/8);

\draw (a34) node[above]{34};
\draw (a12) node[below]{ 34};
\draw (b12) node[below]{\tiny34};
\draw (c12) node[below]{\tiny34};
\draw (d1c4) node[above]{34};

\coordinate (d12) at (0,4.1);
\coordinate (c4b5) at (5.5/4,16/8);
\coordinate (c3b4) at (4/8,10/8);
\coordinate (b23) at (-5/8,5/8);
\coordinate (ab1) at (-1.9,7/8);

\draw (d12) node[left]{23};
\draw (c4b5) node[right]{\tiny23};
\draw (c3b4) node{\tiny23};
\draw (b23) node[below]{23};
\draw (ab1) node[left]{23};

\draw[thick,->] (d1)--(d2);
\draw[thick,->]  (c4)--(b5);
\draw[thick,->]  (c3)--(b4);
\draw[thick,->]  (b2)--(b3);
\draw[thick,->]  (b1)--(a1);
\draw[thick,->] (a2)--(a1);
\draw[thick,->]  (b2)--(b1);
\draw[thick,->]  (c2)--(c1);
\draw[thick,->]  (c4)--(d1);
\draw[thick,->]  (a3)--(a4);

\coordinate (bc1) at (-11/8,16/8);
\coordinate (bc2) at (-3.5/4,10/8);
\coordinate (b34) at (.65,6/8);
\coordinate (a23) at (9/8,-1/2);
\coordinate (a01) at (-27/8,-5/8);
\coordinate (c23) at (0,7/4);
\coordinate (cd1) at (-7/8,10.5/4);

\draw[red] (c23) node[above]{13};
\draw[red] (cd1) node[above]{13};
\draw[blue] (bc1) node[left]{\tiny12};
\draw[blue] (bc2) node[right]{\tiny12};
\foreach \x in{b34,a23,a01}
\draw[blue] (\x) node[below]{12};

\draw[blue,very thick, ->] (b1)--(c1);
\draw[blue,very thick, ->]  (b2)--(c2);
\draw[blue,very thick, ->]  (b3)--(b4);
\draw[blue,very thick, ->] (a2)--(a3);
\draw[blue,very thick, ->] (a1)--(a0);
\draw[red,very thick, ->] (c2)--(c3);
\draw[red,very thick, ->] (c1)--(d1);
\draw (nP1)--(S1);
\draw[blue] (nP3)--(nP2)--(S3)--(nP3);
\draw (nP3)--(S1)--(nP2)--(S3)--(nP1)--(S2);
\draw (nP3)--(nP1)--(nP2);
\draw (S2)--(S3)--(S1)--(S2);
\draw[red] (nP3)--(S2)--(P3);
\draw (nP1) node[right]{$P_1[1]$};
\draw (nP3) node[left]{$P_3[1]$};
\draw (nP2) node[right]{$P_2[1]$};
\end{tikzpicture}
\caption{This shows the associahedron $K_4$ with 14 vertices which are in the centers of the 14 triangles in the dual picture which is the semi-invariant picture from Example \ref{eg: A3 cluster tilting objects}. Loday put labels on edges in $K_4$ to indicate which operations connect the binary trees at the two endpoints. Loday intended his associahedron picture to be dual to what he called ``Igusa's picture'' which is the ``second order Steinberg relations'' shown in Figure \ref{Fig: A3 picture with xij} with $x_{ij}$ dual to $(ij)$ in Loday's picture.
}
\label{fig: K4 with labels}
\end{center}
\end{figure}
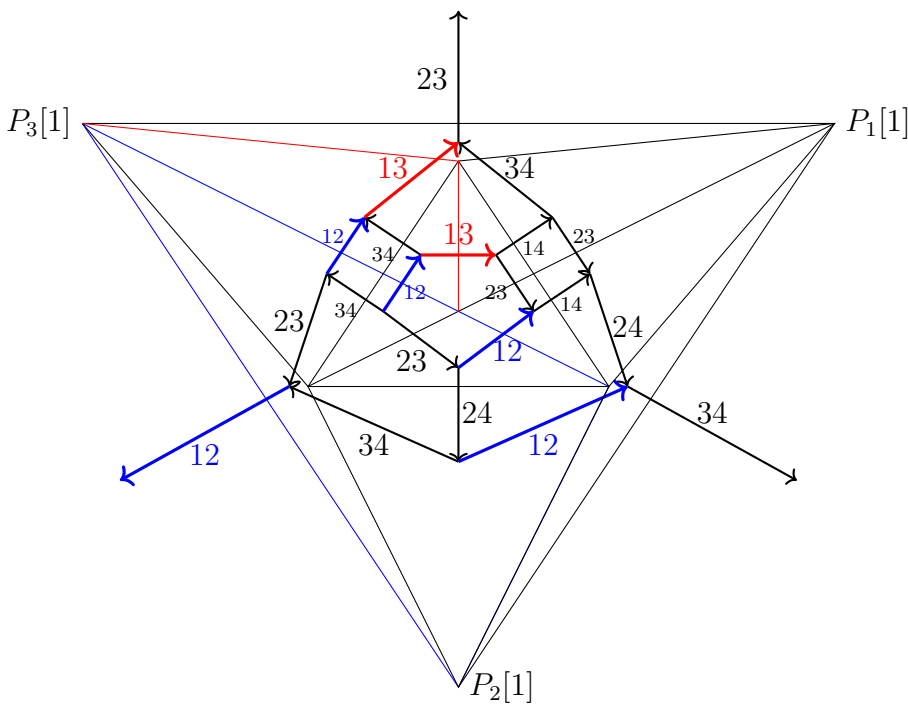
}
The next case may be more instructive. In \cite{Loday}, Loday draws the associahedron $K_4$ shown in Figure \ref{fig: K4 with labels} but without the dual picture superimposed. To each of the 14 vertices he associates a rooted binary tree with 4 internal nodes (and 5 leaves). Each edge corresponds to a mutation of a tree by switching an edge from left to right, so that the top vertex changes from vertex $i$ to vertex $j$. Then the relative heights of vertices $i,j$ switch order and the moment they are at the same height is the domain of the semi-invariant. If we turn the binary tree upside down, we see that the heights of the intermediate vertices must be larger when the heights of vertex $i$ and vertex $j$ become equal. And this is exactly the condition under which the determinant of $V_j\to V_i$ is defined. So, the duality makes sense.

Note that the faces of the associahedron are all squares and pentagons. These give the commutativity relation and the 5-term relation in Definition \ref{def: Stasheff gp}. There are 3 pairs of faces with the same relations labeling them. They are at the vertices $P_i[1]$ and $P_i$. Loday defines the space $X(2)$ to be the 2 dimensional cell complex given by these  edges and faces:
\[
    X(2)=(\ast\vee_{x\in\cX}e_x^1 )\cup_{y\in\cY}e_y^2.
\]
The semi-invariant picture \ref{fig: projective si domains} gives a surjective map
\[
    f:S^2\to X(2).
\]
The mapping cone $C(f)=X(2)\cup_f e^3$ is our picture space $X(A_3)$. Loday considered this and asked if it was a $K(\pi,1)$. His paper went on to say, if this space still has higher homotopy groups, we should add higher dimensional cells and call these ``higher homotopy syzygies''.

For larger $n$, Loday defined a space $X(n)$ to be the $n$-dimensional cell complex given by making identifications on the boundary sphere of the $n+1$ dimensional associahedron $K_{n+1}$. He noted that the associahedron itself could be attached as a final $n+1$ cell to make what we call the picture space: 
\[
X(A_n)=X(n)\cup K_{n+1}.
\]
Loday obverved that the 2-dimensional faces of any associahedron $K_n$ are squares and pentagons and these give exactly the relations of the Stasheff group.

\subsection{Picture group for Dynkin quivers}\label{ss:picture grp for Dynkin}

{ 
Loday's construction is very similar to the construction of the picture group and picture space for an arbitrary hereditary algebra $\Lambda$ of finite representation type. The only difference is that we get not just squares and pentagons, but also hexagons and octagons. These are the 2-dimensional faces of the ``generalized associahedron'' obtained as the dual of the semi-invariant triangulation of the sphere $S^{n-1}$ together with the disk $D^n$. The hexagons come from algebras whose underlying valued quivers are of type $B_n,C_n,F_4$. Octagons come from $G_2$ (and $G_2\coprod Q$ for any Dynkin quiver $Q$). The disjoint union of quivers gives the product of algebras. But the hexagon is not a regular hexagon. (See also \cite{Stella}.)

\begin{eg}\label{eg: B2 quiver to hexagon}
Figure \ref{fig: B2 example} shows the process for the algebra of type $B_2$ with valued quiver
\[
	1\xleftarrow{(1,2)}2\qquad \text{or}\qquad \RR\leftarrow \CC.
\]
There are 4 positive roots: $\beta,\alpha+\beta,2\alpha+\beta,\alpha$. We like to write the order of roots as $a\alpha+b\beta$ in the increasing order of the ration $a/b$: $0/1,1/1,2/1,1/0$. By Gabriel \cite{Gabriel}, the indecomposable modules over $\Lambda$ are $M_\gamma$ for all positive roots $\gamma$. Each semi-invariant domain $D(M_\gamma)$ is contained in the hyperplane $\gamma^\perp$. The formula for $D(M)$ puts $D(M_{\alpha+\beta}$ and $D(M_{2\alpha+\beta})$ on the negative side of $D(M_\alpha)$ since $M_\alpha\subset M_{\alpha+\beta},M_{2\alpha+\beta}$. This hexagon gives the 6 term relation (composing arrows left-to-right):
\begin{equation}\label{eq: hexagon relation}
	x_\alpha x_\beta= x_{\beta}x_{\alpha+\beta} x_{2\alpha+\beta}x_\alpha.
\end{equation}
\end{eg}
}

{
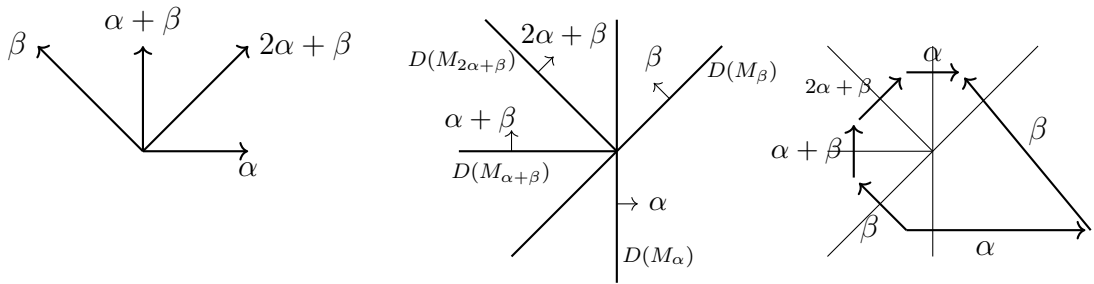
\begin{figure}[htbp]
\begin{center}
\begin{tikzpicture}[scale=.7]
\begin{scope}[xshift=-9cm]
\draw[thick,<-] (-2,2)--(0,0);
\draw[thick,<-] (2,2)--(0,0);
\draw[thick,<-] (2,0)--(0,0);
\draw[thick,<-] (0,2)--(0,0);
\draw (2,0) node[below]{$\alpha$};
\draw (2,2) node[right]{$2\alpha+\beta$};
\draw (0,2) node[above]{$\alpha+\beta$};
\draw (-2,2) node[left]{$\beta$};
\end{scope}
\begin{scope}
\draw[thick] (-2.5,2.5)--(0,0); 
	\draw[->] (-1.5,1.5)--(-1.2,1.8);
	\draw (-1,2.2) node{\small$2\alpha+\beta$};
	\draw(-1.7,1.7) node[left]{\tiny$D(M_{2\alpha+\beta})$};
\draw[thick] (2,2)--(-2,-2); 
	\draw (1.5,1.5) node[right]{\tiny$D(M_\beta)$};
	\draw[->] (1,1)--(.7,1.3);
	\draw (.7,1.3) node[above]{\small$\beta$};
\draw[thick] (0,0)--(-3,0); 
	\draw(-2.2,0)node[below]{\tiny$D(M_{\alpha+\beta})$};
	\draw[->] (-2,0)--(-2,.4);
	\draw(-1.7,.6)node[left]{\small$\alpha+\beta$};
\draw[thick] (0,2.5)--(0,-2.5);
	\draw(-.1,-2)node[right]{\tiny$D(M_\alpha)$};
	\draw[->] (0,-1)--(.4,-1);
	\draw (.4,-1)node[right]{\small$\alpha$};
\end{scope}
\begin{scope}[xshift=6cm]
\draw (-2,2)--(0,0);
\draw (2,2)--(-2,-2);
\draw (0,0)--(-2,0);
\draw (0,2)--(0,-2);
\coordinate (A) at (-.5,1.5);
\coordinate (Am) at (-.6,1.4);
\coordinate (B) at (.5,1.5);
\coordinate (Bp) at (.6,1.4);
\coordinate (F) at (-1.5,.5);
\coordinate (Fp) at (-1.4,.6);
\coordinate (E) at (-1.5,-.5);
\coordinate (Em) at (-1.4,-.6);
\coordinate (D) at (-.5,-1.5);
\coordinate (C) at (3,-1.5);
\coordinate (Cm) at (2.9,-1.5);
\draw[thick,->] (Fp)--(Am);
\draw[thick,->] (A)--(B);
\draw[thick,->] (C)--(Bp);
\draw[thick,->] (D)--(Em);
\draw[thick,->] (E)--(F);
\draw[thick,->] (D)--(Cm);
\draw (0,1.5)node[above]{$\alpha$};
\draw (1,-1.5)node[below]{$\alpha$};
\draw (2,-.1)node[above]{$\beta$};
\draw (-1.2,-1)node[below]{$\beta$};
\draw (-1.5,0)node[left]{\small$\alpha+\beta$};
\draw (-.95,1.2)node[left]{\tiny$2\alpha+\beta$};
\end{scope}
\end{tikzpicture}
\caption{For the Dynkin quiver $B_2$ the root system is shown on the left. With $\alpha$ being the short root and $M_\alpha$ being the simple projective, the semi-invariant domains is shown next with $D(M_{\alpha+\beta})$ and $D(M_{2\alpha+\beta})$ on the negative side of $D(M_\alpha)$. The dual hexagon is shown on the right.}
\label{fig: B2 example}
\end{center}
\end{figure}
}

{
Following \cite{ITW16}, we denote by $\Sigma(Q)$ the simplicial complex which is the semi-invariant triangulation of $S^{n-1}$ where $Q$ is the underlying valued quiver of $\Lambda$ \cite{IOTW2}\cite{DR}. Instead of attaching one $n$-cell at the end as Loday does, we take the cone of $\Sigma(Q)$. This has one $n$-simplex for every $n-1$ simplex in $\Sigma(Q)$. By Theorem \ref{thm: cluster-tilting triangulation}, the $n-1$ simplices of $\Sigma(Q)$ correspond to the cluster-tilting object of $\Lambda$. Thus, the cone $C(\Sigma(Q))$ will have one $n$-simplex for every cluster-tilting object of $\Lambda$. 

Following Loday, we take the dual of the semi-invariant picture $\Sigma(Q)$, call it $D\Sigma(Q)$. Then we identify faces of this dual which have the same ``label''. Loday called the result $X(n-1)$ since it is $n-1$ dimensional. Then he attached an $n$-cell along the map
\[
S^{n-1}=\Sigma(Q)\cong D\Sigma(Q)\xrightarrow \varphi D\Sigma(Q)/\sim=X(n-1).
\]
Then Loday's picture space is $X(n-1)\cup e^n$ which has only one $n$-cell attached to $X(n-1)$ by the ``picture''. We do something equivalent (homeomorphic). Our picture space is
\[
	X(Q)=C(\Sigma(Q))\cup_\varphi D\Sigma(Q)/\sim.
\]
This format has the advantage that it makes $X(Q)$ a union of cubes, one for every cluster-tilting object of $\Lambda$.
}

{
We need to describe the labeling of the dual of $\Sigma(Q)$. The vertices of the dual are the barycenters of the top dimensional simplices. These are unlabeled. We will identify them all to one vertex. (So, the picture space will have two vertices, the vertices of the dual sphere identified to one point and the cone point.) 

The edges of the dual of $\Sigma(Q)$ are the lines that go from the barycenters of two $n-1$ dimensional simplices which share an $n-2$ dimensional face. Recall from Theorem \ref{thm: semi-invariant triangulation of sphere} that the $n-2$ dimensional simplices form the domains of semi-invariants $D(M)$. The edge goes through the barycenter of that face. See Figure \ref{Fig: two triangles}. We use the notation $M_\alpha$ to denote the indecomposable module with dimension vector $\alpha=\undim M_\alpha$. It is well-known \cite{Gabriel} that these vectors form the set of positive roots of the root system with Dynkin diagram the underlying graph of the valued quiver $Q$ of $\Lambda$.
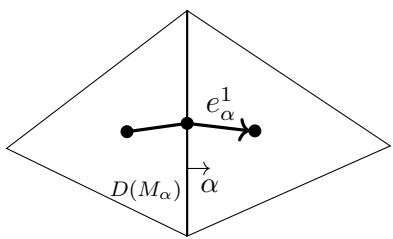
\begin{figure}[htbp]
\begin{center}
\begin{tikzpicture}[scale=1.5]
\coordinate (A) at (0,0);
\coordinate (Cp) at (0.05,.4);
\coordinate (C) at (0,1);
\coordinate (Cn) at (0,.6);
\coordinate (Cnp) at (.2,.6);
\coordinate (B1) at (-8/5,7/9);
\coordinate (B15) at (-8/15,25/27);

\coordinate (B25mm) at (1.5/5,14/15);
\coordinate (B25m) at (2.8/5,14/15);
\coordinate (B25) at (3/5,14/15);
\coordinate (B2) at (9/5,4/5);
\coordinate (Ap) at (0,2);

\draw[->] (Cn)--(Cnp);
\draw (.2,.45)node{\small$\alpha$};
\draw (Cp) node[left]{\tiny $D(M_\alpha)$};
\draw (A)--(B1)--(Ap)--(A)--(B2)--(Ap);
\draw[thick] (A)--(Ap);
\draw[very thick,->] (B15)--(C)--(B25m);
\foreach \x  in {C,B15,B25}
\draw[fill] (\x) circle[radius=1.5pt];
\draw(B25mm) node[above]{$e^1_\alpha$};
\end{tikzpicture}
\caption{The dual 1-cell $e^1_\alpha$ crosses the middle of a codimension one face contained in $D(M_\alpha)$ which is a subset of the hyperplane perpendicular to the vector $\alpha=\undim M_\alpha$. We orient $e^1_\alpha$ in the direction $\alpha$. }
\label{Fig: two triangles}
\end{center}
\end{figure}
}

{
A dual 2-cell $E(\rho)$ of $D\Sigma(Q)$ is a $k$-sided polygons centered at the barycenter of a codimension-2 simplex $\rho$ in $\Sigma(Q)$ where $k=4,5,6$, or $8$. Figure \ref{fig: dual square} below illustrates the case $k=4$ and Figure \ref{fig: B2 example} shows the case $k=6$ with $\rho$ being the center point. Looking at Figure \ref{fig: dual square} (and Figure \ref{fig: K4 with labels}) we see that the parallel sides of a dual square have the same labels. We also see that, when vertices are identified and edges with the same label are identified, any dual square will become a torus. Also, if two toruses have the same label, they are identified. For example, in Figure \ref{fig: K4 with labels} two squares are labeled with $12,34$. These are pasted together but with the opposite orientation.

}

{
\begin{figure}[htbp]
\begin{center}
\begin{tikzpicture}
\coordinate (A) at (0,0);
\coordinate (A0) at (0,-2);
\coordinate (A1) at (0,2);
\coordinate (BL) at (-1.6,-2);
\coordinate (BR) at (1.6,2);
\coordinate (CL) at (-3,0);
\coordinate (CR) at (3,0);
\coordinate (X0) at (0,0);
\coordinate (X1) at (0,0);
\coordinate (Y0) at (0,0);
\coordinate (Y1) at (0,0);

\begin{scope}
\clip (CL)--(A0)--(A1)--(CL);
\foreach \x in {0,.2,.4,.6,.8,1,1.2,1.4,1.6,1.8}
\draw[blue] (-3,\x)--(3,\x);
\foreach \x in {.2,.4,.6,.8,1,1.2,1.4,1.6,1.8}
\draw[blue] (-3,-\x)--(3,-\x);
\draw[very thick,blue] (A0)--(CL)--(A1);
\draw[fill,white] (A0)--(A1)--(BL)--(A0);
\end{scope}

\begin{scope}
\clip (BL)--(A0)--(BR)--(A1)--(BL);
\foreach \x/\y in {.2/-1.4,.4/-1.2,.6/-1,.8/-.8,1/-.6,1.2/-.4,1.4/-.2}
\draw[red] (\x,2)--(\y,-2);
\draw[very thick,red] (A0)--(BL)--(A1)--(BR)--(A0);
\draw[fill,white] (A0)--(A1)--(CR)--(A0);
\end{scope}

\begin{scope}
\clip (CR)--(A0)--(A1)--(CR);
\foreach \x in {0,.2,.4,.6,.8,1,1.2,1.4,1.6,1.8}
\draw[blue] (-3,\x)--(3,\x);
\foreach \x in {.2,.4,.6,.8,1,1.2,1.4,1.6,1.8}
\draw[blue] (-3,-\x)--(3,-\x);
\draw[very thick,blue] (A0)--(CR)--(A1);
\end{scope}

\draw[very thick] (A0)--(A1);
\draw[very thick] (A)--(1,0)--(.8,-.5)--(-.2,-.5)--(A);
\draw[thick,dashed] (A)--(-1,0)--(-1.3,-.5)--(-.2,-.5)--(A);
\draw[dashed] (-1,0)--(-.7,.5)--(1.2,.5)--(1,0)
(.22,.5)--(A);
\draw[very thick] (-.8,0)--(-1,0)--(-1.3,-.5)--(-1,-.5);

\draw (-.1,-1.7) node[right]{$\rho$};
\draw (.5,-.3) node{\tiny$E(\rho)$};

\draw[red] (-1,-2) node[below]{$D(M_\alpha)$};
\draw[red] (2.2,2) node[below]{$D(M_\alpha)$};
\draw[blue] (-3,-.2) node[below]{$D(M_\beta)$};
\draw[blue] (3,-.2) node[below]{$D(M_\beta)$};

\begin{scope}[xshift=5.5cm]
\draw[thick, red] (-.5,-1)--(.5,1);
\draw[thick,blue]  (-1,0)--(1,0);
\draw[red] (0.2,-1) node{\tiny$D(M_\alpha)$};
\draw[blue] (-1,0) node[above]{\tiny$D(M_\beta)$};
\draw (0,-2) node{$\Sigma(\cW(\rho))$};
\end{scope}

\begin{scope}[xshift=8cm]
\draw[thick,red,->] (0.2,.7)--(2,.7);
\draw[thick,red,->] (-.3,-.7)--(1.5,-.7);
\draw[thick,blue,->] (0.2,.7)--(-.3,-.65);
\draw[thick,blue,->] (2,.7)--(1.5,-.65);
\draw[red] (1,.7)node[above]{$e^1_\alpha$};
\draw[red] (.6,-.7)node[below]{$e^1_\alpha$};
\draw[blue] (0,0.1)node[left]{$e^1_\beta$};
\draw[blue] (1.75,0)node[right]{$e^1_\beta$};
\draw (0.8,0) node{\small$E(\rho)$};
\end{scope}
\end{tikzpicture}
\caption{This figure shows the case $n=4$ and $k=4$. The $n-3$ simplex $\rho$ lies in the intersection of two semi-invariant domains $\rho\subset D(M_\alpha)\cap D(M_\beta)$ of which we see the positive side. The polygon $E(\rho)$ dual to $\rho$ is the square shown on the right. This square gives the relation $x_\alpha x_\beta=x_\beta x_\alpha$ in the picture group. The picture for the wide subcategory of $\rho$ is shown in the middle.}
\label{fig: dual square}
\end{center}
\end{figure}
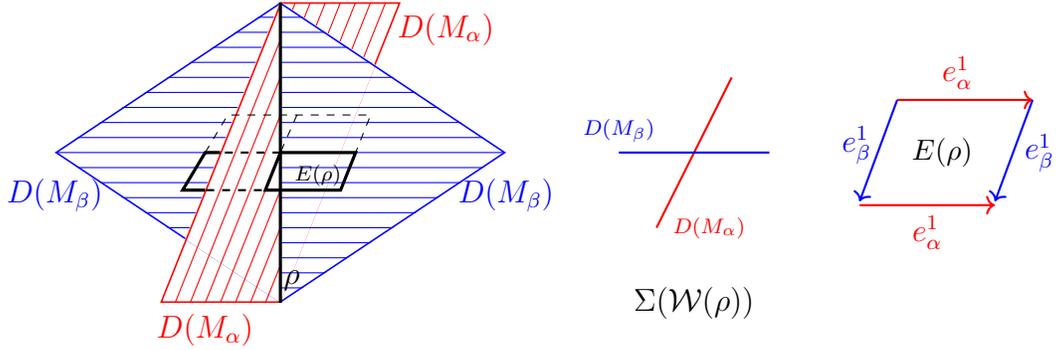
}

We need one more thing: Which pairs $\alpha,\beta$ form these dual squares? We go back to Loday's construction in order to generalize it. Let $Q$ be the quiver of type $A_n$ shown in \eqref{eq: An straight}. The path algebra $KQ$ has projective modules $P_j$ given by $K$ at each vertex $i\le j$ and 0 elsewhere. So, the support of $P_j$ is $(0,j]$. Then, all indecomposable modules are given by $M_{ij}=P_j/P_i$ where $P_0:=0$. This has support $(i,j]$. In particular, $P_j=M_{0j}$ and the simple module $S_j$ is $M_{j-1,j}$. Note that $M_{ij}$ is a submodule of $M_{k\ell}$ if and only if $i=k$ and $j\le\ell$. Similarly $M_{ij}$ is a quotient of $M_{k\ell}$ if and only if $j=\ell$ an $k\le i$. This implies that $M_{ij},M_{k\ell}$ are Hom-orthogonal, i.e.,
\[
    \Hom_\Lambda(M_{ij},M_{k\ell})=0=\Hom_\Lambda(M_{k\ell},M_{ij})
\]
if and only if the half open intervals $(i,j]$ and $(k,\ell]$ are either disjoint or $i<k<\ell<j$ or $k<i<j<\ell$. 

Even if $M_{ij},M_{k\ell}$ are Hom-orthogonal, they can still extend each other if $j=k$ or $i=\ell$. However, both cannot happen. For example, if $j=k$, we have an exact sequence $M_{ij}\to M_{i\ell}\to M_{j\ell}$ and no extension the other way. Thus, $M_{ij},M_{k\ell}$ are Hom and Ext orthogonal if and only if the closed intervals $[i,j]$ and $[k,\ell]$ are either disjoint or one is contained in the interior of the other. This is exactly condition (2) in Definition \ref{def: Stasheff gp}. In general we have the following where we use the following notation. We say that the roots $\alpha,\beta$ are \emph{hom-orthogonal} if $M_\alpha,M_\beta$ are Hom-orthogonal. Similarly $\alpha,\beta$ are \emph{ext-orthogonal} if $M_\alpha,M_\beta$ are Ext-orthogonal.

\begin{lem}\cite{ITW16}\label{lem formula for square}
    In the dual complex $D(\Sigma(Q))$, $e_\alpha^1,e_\beta^1$ form the boundary of a dual square if and only if $\alpha,\beta$ are both hom- and ext-orthogonal.
\end{lem}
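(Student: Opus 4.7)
The plan is to reduce the lemma to the classification of rank-$2$ wide subcategories of $mod\text-\Lambda$. Each dual $2$-cell $E(\rho)$ in $D(\Sigma(Q))$ is centered at a codimension-$2$ simplex $\rho$ of the semi-invariant picture $\Sigma(Q)$. The vertices of $\rho$ form a partial cluster-tilting object whose Hom-Ext perpendicular category is a rank-$2$ wide subcategory $\cW(\rho)\subset mod\text-\Lambda$. The first step will be to argue that the link of $\rho$ in $\Sigma(Q)$ is canonically identified with the semi-invariant picture $\Sigma(\cW(\rho))\cong S^1$, so that the boundary of $E(\rho)$ is combinatorially the dual of this $S^1$; in particular the number of sides of $E(\rho)$ equals the number of indecomposable objects in the cluster category of $\cW(\rho)$, and each side $e_\gamma^1$ is labeled by an indecomposable of $\cW(\rho)$ (or a shifted projective of $\cW(\rho)$).

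Next I would invoke the classification of valued Dynkin diagrams of rank $2$, namely $A_1\times A_1$, $A_2$, $B_2$, $G_2$, with $2, 3, 4, 6$ positive roots respectively. Adding the two shifted projectives of $\cW(\rho)$ gives $4, 5, 6, 8$ indecomposables in the respective cluster categories, so $E(\rho)$ is a square, pentagon, hexagon, or octagon accordingly (compare Figures \ref{fig: projective si domains} and \ref{fig: B2 example}). Hence $E(\rho)$ has four sides exactly when $\cW(\rho)$ has type $A_1\times A_1$, and the two simples of such a $\cW(\rho)$ are precisely the modules $M_\alpha, M_\beta$ whose semi-invariant domains meet along $\rho$; these simples are by definition Hom- and Ext-orthogonal, giving the forward direction.

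For the converse, given a Hom- and Ext-orthogonal pair $M_\alpha, M_\beta$, they themselves are the simples of a rank-$2$ wide subcategory of type $A_1\times A_1$, and completing $\{M_\alpha, M_\beta\}$ to a partial cluster-tilting object by any cluster-tilting object of the rank-$(n-2)$ perpendicular category $(M_\alpha\oplus M_\beta)^{\perp_{01}}$ produces a codimension-$2$ simplex $\rho$ whose dual $2$-cell is a square bounded by $e_\alpha^1$ and $e_\beta^1$. The main technical obstacle will be the link identification in the first step: matching the codimension-$1$ faces of $\Sigma(Q)$ adjacent to $\rho$ with the semi-invariant domains of $\cW(\rho)$. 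This amounts to showing that the $g$-vector domain formula \eqref{eq: def of D(M)} is compatible with passage to the wide subcategory $\cW(\rho)$, which should follow from the perpendicular-category formalism of \cite{IOTW2}.
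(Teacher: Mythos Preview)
The paper does not give its own proof of this lemma; it is cited from \cite{ITW16} and stated without argument. Your outline is entirely in the spirit of the surrounding discussion---indeed Figure~\ref{fig: dual square} already draws $\Sigma(\cW(\rho))$ next to $E(\rho)$, and Theorem~\ref{thm: dual cell to rho} later records the identification $E(\rho)\cong X(\cW(\rho))$ that you are using in your first step. So your reduction to the classification of rank-$2$ Dynkin wide subcategories ($A_1\times A_1$, $A_2$, $B_2$, $G_2$ giving $4,5,6,8$ sides) is exactly the intended mechanism.

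One correction in your converse direction: to produce a codimension-$2$ simplex $\rho$ with $\cW(\rho)=\add(M_\alpha\oplus M_\beta)$, the vertices of $\rho$ must be the $g$-vectors of an $(n-2)$-term partial cluster $T$ with $T^{\perp_{01}}=\add(M_\alpha\oplus M_\beta)$. Such a $T$ is found as a tilting object of the \emph{left} perpendicular category $^{\perp_{01}}(M_\alpha\oplus M_\beta)$, not the right one $(M_\alpha\oplus M_\beta)^{\perp_{01}}$ as you wrote; compare the paragraph after Example~\ref{eg: mod L to W(r)}. The modules $M_\alpha,M_\beta$ themselves are not vertices of $\rho$---they label the walls passing through $\rho$. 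With that fix, your argument goes through.
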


For other dual polygons, we refer to the examples of the pentagons in Loday's figure \ref{fig: K4 with labels} and hexagons given by Example \ref{eg: B2 quiver to hexagon} and Figure \ref{fig: B2 example}. The octagon is similar. These give the relations in the picture group.

\begin{defn}\label{def: picture group in general}
    The \emph{picture group} of $Q$ is the group with generators $x_\alpha$ for all positive roots $\alpha$ of $Q$ with relations given by the 2-dimensional faces of the dual complex $D(\Sigma(Q))$.
\end{defn}

\begin{thm}\cite{ITW16}\label{thm: presentation of the picture group}
The \emph{picture group} of $Q$ (also called the picture group of $\Lambda$) is the group given by generators and relations as follows.
\begin{enumerate}
    \item The generators are symbols $x_\alpha$ for all positive roots $\alpha$ of $Q$. (Thus $M_\alpha$ are all the indecomposable $\Lambda$-modules.)
    \item When $\alpha,\beta$ are hom-orthogonal roots and $\Ext^1_\Lambda(M_\beta,M_\alpha)=0$ then $x_\alpha x_\beta$ is the product of all $x_\gamma$ where $\gamma$ runs over all root $a\alpha+b\beta$ which positive integer linear combinations of $\alpha$ and $\beta$ in increasing order of the ratio $a/b$. This gives
    \begin{enumerate}
        \item Square relations: $x_\alpha x_\beta=x_\beta x_\alpha$
        \item Pentagon relations: $x_\alpha x_\beta=x_\beta x_{\alpha+\beta} x_\alpha$
        \item Hexagon relations: $x_\alpha x_\beta=x_\beta x_{\alpha+\beta}x_{2\alpha+\beta} x_\alpha$
        \item Octagon relations: $x_\alpha x_\beta=x_\beta x_{\alpha+\beta}x_{3\alpha+2\beta} x_{2\alpha+\beta}x_{3\alpha+\beta}x_\alpha$
    \end{enumerate}
\end{enumerate}
\end{thm}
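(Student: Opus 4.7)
The plan is to identify each relation in the presentation with the boundary word of a $2$-face of the dual complex $D\Sigma(Q)$ and then to classify those $2$-faces via a rank-$2$ reduction. By Definition \ref{def: picture group in general}, the picture group is presented by one generator $x_\alpha$ for each positive root and one relation for each $2$-cell of $D\Sigma(Q)$, so it suffices to enumerate these $2$-cells and read off their boundary words.

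First, I would show that each $2$-cell $E(\rho)$ is governed by a rank-$2$ wide subcategory. The cell $E(\rho)$ is dual to a codimension-$2$ simplex $\rho$ of $\Sigma(Q)$, and by Theorem \ref{thm: semi-invariant triangulation of sphere} together with the formula \eqref{eq: def of D(M)}, the link of $\rho$ in $\Sigma(Q)$ is a triangulated $1$-sphere whose codimension-$1$ arcs lie in the semi-invariant domains $D(M_\gamma)$ that pass through $\rho$. The indecomposable modules $M_\gamma$ arising this way are precisely the exceptional objects of the Hom-Ext perpendicular category $\cW(\rho)\subset mod\text-\Lambda$ attached to $\rho$. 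Since $\cW(\rho)$ is a finitely generated wide subcategory, it is equivalent to the module category of a hereditary algebra of rank $2$, and its own semi-invariant picture is the link of $\rho$.

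Second, I would classify $\cW(\rho)$ and read off the relation. Since $\Lambda$ is of finite representation type, so is $\cW(\rho)$; the rank-$2$ hereditary algebras of finite type have valued graph $A_1\sqcup A_1$, $A_2$, $B_2=C_2$, or $G_2$, with $k=2,3,4,6$ positive roots respectively, so the link of $\rho$ is a circle with $2k$ arcs and $E(\rho)$ is the dual $(k+2)$-gon (square, pentagon, hexagon, or octagon). Let $\alpha,\beta$ be the two simples of $\cW(\rho)$, chosen so that $\Ext^1_\Lambda(M_\beta,M_\alpha)=0$; the remaining positive roots are the vectors $\gamma=a\alpha+b\beta$ with $a,b>0$ listed in the classification. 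The two edges $e^1_\alpha,e^1_\beta$ meet at one vertex of $E(\rho)$, and going the other way around, one crosses the remaining edges $e^1_\gamma$. Because $M_\alpha$ is the simple projective of $\cW(\rho)$, it is a submodule of every $M_\gamma$ with $a,b>0$, so by \eqref{eq: def of D(M)} each such $D(M_\gamma)$ lies in the half-space $\{g\cdot\undim M_\alpha\le 0\}$, as illustrated in Example \ref{eg: B2 quiver to hexagon} and Figure \ref{fig: B2 example}. Ordering the intermediate edges by the slope of $\gamma^\perp$ in the $2$-plane dual to $\rho$ then agrees with ordering by the ratio $a/b$, yielding the relation $x_\alpha x_\beta = x_\beta\cdots x_\alpha$ listed in the theorem; the square case recovers Lemma \ref{lem formula for square}.

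The main obstacle is the rank-$2$ reduction itself: one must rigorously identify the link of $\rho$ in $\Sigma(Q)$ with the semi-invariant picture $\Sigma(\cW(\rho))$, so that the subdivisions of the circle match the positive roots of the rank-$2$ wide subcategory. This compatibility between semi-invariant pictures and passage to Hom-Ext perpendicular wide subcategories follows from functoriality of $g$-vectors and of restriction of semi-invariants, as worked out in \cite{IOTW}, \cite{IOTW2} and \cite{ITW16}.
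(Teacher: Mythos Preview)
Your approach is essentially what the paper sketches: the theorem is cited from \cite{ITW16} rather than proved in this paper, but the surrounding discussion (Lemma~\ref{lem formula for square} for squares, Example~\ref{eg: B2 quiver to hexagon} and Figure~\ref{fig: B2 example} for hexagons, Loday's Figure~\ref{fig: K4 with labels} for pentagons, and the remark that ``the octagon is similar'') outlines exactly your rank-$2$ reduction via $\cW(\rho)$, including the half-plane argument for the ordering of the intermediate roots.

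One small arithmetic slip: the link of $\rho$ is a circle subdivided into $k+2$ arcs, not $2k$. The two simples of $\cW(\rho)$ give full hyperplanes (since a simple has no proper nonzero submodule in \eqref{eq: def of D(M)}), contributing two points each on the circle, while the remaining $k-2$ indecomposables give half-hyperplanes contributing one point each; this yields $4+(k-2)=k+2$ vertices, consistent with your $(k+2)$-gon.
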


\begin{cor}\label{cor: Sta(n)=G(An)}
    The Stasheff group $Sta_n$ is isomorphic to the picture group of $A_{n-1}$ with linear orientation.
\end{cor}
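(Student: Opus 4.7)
The plan is to construct an explicit isomorphism $\varphi : Sta_n \to G(A_{n-1})$ by matching generators and checking that the two families of defining relations agree. Enumerate the positive roots of $A_{n-1}$ by pairs $(i,j)$ with $1 \le i < j \le n$: the root $\alpha_{ij} = \alpha_i + \alpha_{i+1} + \cdots + \alpha_{j-1}$ (up to a relabeling of vertices chosen to accommodate the orientation of arrows) has associated indecomposable $M_{ij}$ with support $\{i, i+1, \ldots, j-1\}$. Since both groups have $\binom{n}{2}$ generators, the assignment $x_{ij} \mapsto x_{\alpha_{ij}}$ is a bijection on generating sets, and it remains to verify that the defining relations on each side follow from those on the other.

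For the commutator relations of Definition \ref{def: Stasheff gp}(2), observe that for distinct $i,j,k,\ell$ the Stasheff condition ``$(i,j)$ and $(k,\ell)$ are disjoint intervals or one is contained in the other'' holds precisely when the supports of $M_{ij}$ and $M_{k\ell}$ are disjoint or one is strictly contained in the other. By the paper's discussion just before Lemma \ref{lem formula for square}, this is exactly the criterion for $M_{ij}$ and $M_{k\ell}$ to be both Hom- and Ext-orthogonal, so Theorem \ref{thm: presentation of the picture group}(2a) yields the square relation $x_{\alpha_{ij}} x_{\alpha_{k\ell}} = x_{\alpha_{k\ell}} x_{\alpha_{ij}}$, matching the Stasheff commutator exactly.

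For the pentagon relations $[x_{ij}, x_{jk}] = x_{ik}$ with $i < j < k$, note that $M_{ij}$ and $M_{jk}$ have adjacent but disjoint supports, so they are Hom-orthogonal and exactly one of the two $\Ext^1$ groups is nonzero, producing an extension with middle term $M_{ik}$; hence $\alpha_{ij} + \alpha_{jk} = \alpha_{ik}$ and the hypotheses of Theorem \ref{thm: presentation of the picture group}(2b) are satisfied. After orienting the labelling so that the nonvanishing extension is in the direction forced by the theorem (i.e.\ $\Ext^1(M_{\alpha_{jk}}, M_{\alpha_{ij}}) = 0$), the pentagon relation $x_\alpha x_\beta = x_\beta x_{\alpha+\beta} x_\alpha$ becomes $x_{ij} x_{jk} = x_{jk} x_{ik} x_{ij}$, which is the Stasheff relation after rewriting $[a,b] = b^{-1}aba^{-1}$.

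Finally, since $A_{n-1}$ is simply laced, any two positive roots $\alpha, \beta$ whose sum is a root have no further positive integer combinations $2\alpha + \beta$, $3\alpha + \beta$, or $3\alpha + 2\beta$ that are roots; hence cases (2c) and (2d) of Theorem \ref{thm: presentation of the picture group} are vacuous and the only relations needed on the picture-group side are squares and pentagons. Since these correspond bijectively to the Stasheff commutator and pentagon relations, $\varphi$ is an isomorphism. The main obstacle is primarily bookkeeping: one must pin down the correct bijection between the Stasheff index set $\{1,\ldots,n\}$ and the vertex set $\{1,\ldots,n-1\}$ of $A_{n-1}$ so that (a) the support-disjoint-or-nested condition translates correctly to the Stasheff disjointness/containment condition, and (b) $\Ext^1(M_{\alpha_{jk}}, M_{\alpha_{ij}}) = 0$ holds for all $i < j < k$. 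Both are elementary combinatorial checks once the bijection is fixed.
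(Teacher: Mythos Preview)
Your proposal is correct and follows exactly the route the paper intends: the corollary is stated in the paper without proof because it is immediate from Theorem~\ref{thm: presentation of the picture group} together with the explicit description of the $A_{n-1}$ indecomposables $M_{ij}=P_j/P_i$ and their Hom/Ext behavior given in the paragraphs just before Lemma~\ref{lem formula for square}. Your write-up simply spells out those details---matching generators $x_{ij}\leftrightarrow x_{\alpha_{ij}}$, identifying the Stasheff commutators with the square relations (2a) via the Hom-Ext-orthogonality criterion, identifying the Stasheff five-term relations with the pentagon relations (2b) via the shared-endpoint extensions, and noting that (2c),(2d) are vacuous in the simply-laced case---so there is nothing to add.
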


{

\subsection{Higher dimensional cells of the picture space}\label{ss:higher dim cells of pic space}
The construction of the picture space $X(Q)$ will be by induction (recursion) on the size of $Q$. The recursive construction requires wide subcategories.
\begin{defn}\label{def: wide subcat}
    The \emph{wide subcategory} $\cW(g)$ for any point $g\in \RR^n$ is the full subcategory of $mod$-$\Lambda$ consisting of all modules $M$ so that $g\in D(M)$ where we recall that $D(M\oplus N)=D(M)\cap D(N)$. The wide subcategory for a simplex $\rho$ in $\Sigma(Q)$ is defined to be the wide subcategory for the barycenter of $\rho$. Thus $\cW(\rho)$ contains all $M$ so that $\rho\subset D(M)$.
\end{defn}

It is well-known \cite{IngTh} that, in finite type, wide subcategories $\cW$ of $mod\text-KQ$ are isomorphic to $mod\text-KQ'$ where $Q'$ is another quiver of finite type which is smaller than $Q$. Thus, by induction, we have a picture space for $\cW$ given by
\[
    X(\cW):=X(Q').
\]

\begin{eg}\label{examples of wide sub cats}
Some examples of $\cW(\rho)$:
\begin{enumerate}
    \item If $\rho$ is an $(n-2)$-simplex in $\Sigma(Q)$ then $\rho\subset D(M)$ and $\cW(\rho)=ad(M)$, the additive category generated by $M$.
    \item For $\rho$ as in Figure \ref{fig: dual square}, $\cW(\rho)$ is consists of $M_\alpha,M_\beta$ and direct sums.
    \item For $g=0$, $\cW(0)=mod\text-\Lambda$ since $0\in D(M)$ for all $M$.
\end{enumerate}
\end{eg} 
}

{
In \cite{ITW16} we proved the following where $E(\rho)$ denotes the polyhedron dual to $\rho$
\begin{thm}\label{thm: dual cell to rho}
    For every simplex $\rho$ in $\Sigma(Q)$, the dual polyhedron of $\rho$ is isomorphic to the picture space of the wide subcategory for $\rho$:
    \[
        E(\rho)\cong X(\cW(\rho)).
    \]
\end{thm}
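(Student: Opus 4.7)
The plan is to prove $E(\rho)\cong X(\cW(\rho))$ in three steps: match the combinatorial star of $\rho$ in $\Sigma(Q)$ with the cluster complex of $\cW(\rho)$ via perpendicular-category calculus, dualize, and then verify that the identifications inherited from the ambient $X(Q)$ coincide on $E(\rho)$ with those defining $X(\cW(\rho))$.

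First, I would invoke the standard perpendicular calculus for rigid objects. If the vertices of $\rho$ correspond to the indecomposable summands of a rigid cluster-object $T_\rho=T_1\oplus\cdots\oplus T_{\dim\rho+1}$, then every simplex $\sigma\supset\rho$ has the form $T_\rho\oplus T'$ for a rigid object $T'$ in $\cW(\rho)$. This produces a natural simplicial bijection between the closed star of $\rho$ in $\Sigma(Q)$ and the join $\rho*\Sigma(\cW(\rho))$; in particular, the link of $\rho$ in $\Sigma(Q)$ is isomorphic as a simplicial complex to $\Sigma(\cW(\rho))$. The dimensions match: $\dim E(\rho)=n-1-\dim\rho=\operatorname{rank}\cW(\rho)$, so both $E(\rho)$ and $X(\cW(\rho))$ have the same underlying dimension.

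Next, I would dualize. The face lattice of the polyhedron $E(\rho)\subset D\Sigma(Q)$ is by definition the opposite of the lattice of simplices of $\Sigma(Q)$ containing $\rho$, so by the previous step it is opposite to the simplex lattice of $\Sigma(\cW(\rho))$. Hence, as a cell complex before identifications, $E(\rho)$ is exactly the cone $C(\Sigma(\cW(\rho)))$ glued to $D\Sigma(\cW(\rho))$ along its boundary sphere---precisely the cell complex used to build $X(\cW(\rho))$ prior to the labeling quotient. It then remains to compare the labels. A codimension-one face of $E(\rho)$ dual to an $(n-2)$-simplex contained in a wall $D(M_\alpha)$ carries the label $\alpha$; since $\rho\subset D(M_\alpha)$, the defining inequalities \eqref{eq: def of D(M)} force $M_\alpha\in\cW(\rho)$, so the edge labels appearing in $E(\rho)$ are exactly the positive roots of $\cW(\rho)$. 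Similarly, the $2$-cell relations of Theorem \ref{thm: presentation of the picture group} involve only positive roots of the form $a\alpha+b\beta$ generated by a hom- and ext-orthogonal pair $(\alpha,\beta)$; since $\cW(\rho)$ is a wide subcategory (closed under extensions, kernels, and cokernels), these roots remain inside $\cW(\rho)$, so the defining square/pentagon/hexagon/octagon relations of $X(\cW(\rho))$ agree with those inherited from $X(Q)$.

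The main obstacle will be the final bookkeeping: verifying that two faces of $E(\rho)$ get identified in the ambient quotient $X(Q)$ precisely when their counterparts are identified in $X(\cW(\rho))$. The ``only if'' direction is essentially automatic, since faces with a common root label are identified in either quotient. The ``if'' direction requires ruling out spurious identifications with faces of neighboring dual cells $E(\rho')$; equivalently, one must show that restricting the $g$-vector fan of $\Lambda$ to the affine span perpendicular to $\rho$ recovers the $g$-vector fan of $\cW(\rho)$, so that distinct walls of $X(\cW(\rho))$ remain distinct inside $E(\rho)$. This should follow from reading \eqref{eq: def of D(M)} inside $\cW(\rho)$ in place of the full module category, using the fact that the submodules of a module $M\in\cW(\rho)$ which appear in the defining inequalities are precisely those that lie in $\cW(\rho)$ itself.
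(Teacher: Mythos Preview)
The paper does not supply a proof of this theorem; it is quoted from \cite{ITW16} (``In \cite{ITW16} we proved the following\ldots''), so there is no in-paper argument to compare against. Your outline is a reasonable reconstruction of how such a proof would go, and the core step---identifying the link of $\rho$ in $\Sigma(Q)$ with the cluster complex $\Sigma(\cW(\rho))$ via perpendicular calculus---is indeed the right engine; the paper's Theorem~\ref{thm: W(r) is right perp of T} provides exactly the bijection $\{\sigma\supset\rho\}\leftrightarrow\{\text{partial clusters in }\cW(\rho)\}$ you need.

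Two caveats. First, be explicit about what ``$E(\rho)\cong X(\cW(\rho))$'' is asserting: $E(\rho)$ is a polyhedron (a ball), while $X(\cW(\rho))$ is a quotient space with identifications, so the intended statement is that the image of $E(\rho)$ under the quotient map to $X(Q)$---equivalently, $E(\rho)$ together with its face labeling---realizes $X(\cW(\rho))$. You seem to grasp this in your third step, but the framing should be sharpened. Second, your final sentence is not correct as written: a module $M\in\cW(\rho)$ can certainly have $\Lambda$-submodules not lying in $\cW(\rho)$, so the defining inequalities of $D(M)$ in \eqref{eq: def of D(M)} do not literally restrict to submodules in $\cW(\rho)$. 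What you actually need is that, after intersecting with the linear span of $\rho$ (the subspace where the $g$-vectors of $T_\rho$ vanish), the extraneous submodule inequalities become redundant; this is true but requires an argument, e.g.\ via the linear-algebraic comparison of $g$-vectors in $\Lambda$ versus $\cW(\rho)$ carried out in \cite{IT13}. Your concern about ``neighboring dual cells $E(\rho')$'' is a red herring: the image of $E(\rho)$ in $X(Q)$ is determined entirely by the labels on its own faces.
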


The picture space $X(Q)$ is given by identifying all dual cells $E(\rho)$ with the same wide subcategory $\cW(\rho)$. Thus the picture space $X(Q)$ is a cell complex with one cell for every wide subcategory of $mod\text-\Lambda$ including the zero subcategory. 

In \cite{ITW16} we also gave a convoluted argument that the picture space was a $K(\pi,1)$ and therefore could be used to compute the homology of the picture group. The referee did not appreciate this indirect argument and we hope that the explanation we give below is more enlightening.
}

{
Our motivation for computing the cohomology of $G(A_n)$, the Stasheff group was very similar to Loday's. Loday was inspired by the work of Kapranov and Saito \cite{KS} who showed that, for the integer upper triangular matrix group $U_{n+1}(\ZZ)$, which they called $T_{n+1}(\ZZ)$, the cell structure of the classifying space $BU_{n+1}(\ZZ)$ for this group had cells which looked like Stasheff polytopes and products of them. By Volodin \cite{Volodin}, isomorphic copies of these spaces could be assembled to give a space whose homotopy groups were the algebraic K-theory of $\ZZ$. See \cite{KS},\cite{Book}. Thus, loosely speaking, Kapranov and Saito were speculating that Volodin's space for the K-theory of $\ZZ$ is assembled from pieces isomorphic to Stasheff polytopes.

The advantage of the picture space for $A_n$ is that it is very small and it is not too difficult to compute its homology. There is an obvious homomorphism from the Stasheff group $Sta_n=G(A_{n-1})$ to the unipotent group $U_n(\ZZ)$ given by sending generators $x_{ij}$ of $Sta_n$ to the elementary matrix $E_{ij}$ given by adding the $i$th column of the identity matrix $I_{n+1}$ to its $j$th column. In \cite{ITW16}, we were motivated by the hope that the entire integral homology of the group $U_n(\ZZ)$ could be captured by copies of this construction. 
}

{
The rational cohomology of the group $U_n(\ZZ)$ is known classically. For every Dynkin quiver $\Delta$ there is a unipotent group $U(\Delta)$ whose rational cohomology was computed long ago by Bott using geometry\cite{Bott} and later Kostant gave an algebraic proof\cite{Kostant}.

\begin{thm}\label{thm:Bott-Kostant}
$H^q(U(\Delta);\QQ)$ has basis elements $x_w$ for $w\in W(\Delta)$ of length $q$ where $W(\Delta)$ is the Weyl group of $\Delta$. 
\end{thm}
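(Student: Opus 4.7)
The plan is to prove Theorem \ref{thm:Bott-Kostant} in two stages: first reduce the integer group cohomology of $U(\Delta)(\ZZ)$ to Lie algebra cohomology, and then apply Kostant's algebraic form of Bott's theorem. For the first stage, I would observe that $U(\Delta)(\ZZ)$ is a torsion-free finitely generated nilpotent group sitting as a cocompact lattice in the simply-connected nilpotent real Lie group $U(\Delta)(\RR)$, whose underlying manifold is diffeomorphic to $\RR^N$ (as already exploited in \cite{IgOrr} for Heisenberg-type examples). The nilmanifold quotient $M=U(\Delta)(\ZZ)\backslash U(\Delta)(\RR)$ is therefore a closed aspherical manifold and hence a $K(U(\Delta)(\ZZ),1)$, and Nomizu's theorem identifies its real de Rham cohomology with the Chevalley--Eilenberg cohomology of left-invariant forms:
\[
H^q\bigl(U(\Delta)(\ZZ);\QQ\bigr)\otimes\RR\;\cong\;H^q(M;\RR)\;\cong\;H^q(\mathfrak{n};\RR),
\]
where $\mathfrak{n}$ is the nilradical of a Borel subalgebra of the complex semisimple Lie algebra $\mathfrak{g}$ of type $\Delta$.

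For the second stage I would invoke Kostant's theorem. Kostant proved that $H^q(\mathfrak{n};\CC)$ decomposes as a multiplicity-free direct sum of one-dimensional weight spaces over the Cartan subalgebra $\mathfrak{h}$, indexed by the Weyl group elements $w\in W(\Delta)$ of length $q$, with weight $w(\rho)-\rho$ where $\rho$ is half the sum of the positive roots. Choosing any cocycle representative for each weight space produces the basis $\{x_w\}$ required by the statement, and the total Poincar\'e polynomial collapses to the length generating function $\sum_{w\in W(\Delta)}t^{\ell(w)}$.

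The \textbf{main obstacle} is Kostant's theorem itself; the nilmanifold reduction is routine. Kostant's original argument puts a positive-definite Hermitian inner product on $\Lambda^*\mathfrak{n}^*$, forms the Laplacian $\Box=dd^*+d^*d$ on the Chevalley--Eilenberg complex, and uses the quadratic Casimir of $\mathfrak{g}$ together with Freudenthal's strange formula to show that the harmonic chains in each degree are precisely the $\mathfrak{h}$-weight vectors of weight $w(\rho)-\rho$ for $w$ of that length. A more modern route factors this through the Bernstein--Gelfand--Gelfand resolution of the trivial $\mathfrak{g}$-module by Verma modules, from which the same weight multiplicity one statement can be read off by Weyl character theory. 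The total dimension $|W(\Delta)|$ is forced already by Bott's geometric picture, since the Bruhat decomposition of the flag variety $G/B$ produces a CW structure with one cell of real dimension $2\ell(w)$ for each $w\in W(\Delta)$ and $\chi(G/B)=|W(\Delta)|$; the nontrivial content is the refinement that pinpoints which Weyl element indexes which cohomology class.
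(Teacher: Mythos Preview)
The paper does not give its own proof of this theorem: it is stated as a classical result, attributed to Bott \cite{Bott} for the geometric argument and to Kostant \cite{Kostant} for the algebraic one, and is only used as motivation for the authors' computation of $H^*(G(A_n))$. So there is nothing in the paper to compare your argument against.

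That said, your outline is the standard route and is essentially correct. The reduction step via the nilmanifold $U(\Delta)(\ZZ)\backslash U(\Delta)(\RR)$ and Nomizu's theorem is exactly how one passes from the discrete group to Lie algebra cohomology, and Kostant's computation of $H^*(\mathfrak{n})$ (whether via the Laplacian/Casimir argument or via the BGG resolution) supplies the basis indexed by $w\in W(\Delta)$ with $\ell(w)=q$. One small point of care: you should check that the unipotent group $U(\Delta)$ in question really admits an integral form with $U(\Delta)(\ZZ)$ a cocompact lattice in $U(\Delta)(\RR)$; this follows from the Chevalley construction of the split form of the semisimple group of type $\Delta$, but it is worth saying explicitly rather than assuming it by analogy with $U_n(\ZZ)\subset U_n(\RR)$. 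Beyond that, your identification of Kostant's theorem as the substantive ingredient and the nilmanifold step as routine is accurate.
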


}

The integer homology of $U(\Delta)$ is not completely known. However, the integer cohomology of the picture group $G(A_n)$ is easy to compute assuming the picture space $X(A_n)$ is a $K(\pi,1)$ for $G(A_n)$. 

\begin{thm}[Gordana Todorov, Jerzy Weyman,I]\label{thm:H(G(An)}
    The integer homology of the picture group $G(A_n)$ for a quiver of type $A_n$ with any orientation of arrows is a free abelian group with rank given by ``ballot numbers''\cite{Carlitz}. So, the integer cohomology is also freely generated as an abelian group.
\end{thm}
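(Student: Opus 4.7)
The strategy is to use the assumption that $X(A_n)$ is a $K(\pi,1)$ for $G(A_n)$, which identifies $H_*(G(A_n);\ZZ)$ with the cellular homology of the picture space, and then to compute that cellular chain complex from the cell decomposition supplied by the construction of $X(Q)$ in Section \ref{sec 4: picture group}. Recall that $X(Q)$ is built by identifying every dual cell $E(\rho)$ with $E(\rho')$ whenever $\cW(\rho)=\cW(\rho')$. Combined with Theorem \ref{thm: dual cell to rho}, this makes the cells of $X(A_n)$ biject with wide subcategories of $\text{mod-}KA_n$, with a rank-$k$ wide subcategory giving a $k$-cell (and the zero subcategory giving the cone point as the unique $0$-cell, together with a single top cell coming from $\cW=\text{mod-}KA_n$ itself). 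Since wide subcategories of $KA_n$ depend only on the underlying unoriented graph (they are in canonical bijection with noncrossing partitions of an $(n+1)$-element set, independently of orientation), this cell decomposition is the same for every orientation of $A_n$; this explains why the homology is orientation-independent even though $G(A_n)$ itself is not.

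Next I would verify the combinatorial rank count. For type $A_n$, the rank-$k$ wide subcategories are in bijection with noncrossing partitions of $[n+1]$ having exactly $n+1-k$ blocks, and the number of such noncrossing partitions is a ballot number in the sense of Carlitz \cite{Carlitz}. Thus, assuming we can show that the cellular chain complex of $X(A_n)$ has zero differential, the rank of $H_k$ will be exactly the ballot number predicted by the theorem.

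The main work, and the main obstacle, is showing that every cellular boundary map in $C_\bullet(X(A_n);\ZZ)$ vanishes. The plan is to argue inductively on $n$ using Theorem \ref{thm: dual cell to rho}: the $k$-cell attached to the wide subcategory $\cW$ is $X(\cW)$, and its attaching map factors through the identifications on $\partial X(\cW)$, so the boundary of that cell in $C_\bullet(X(A_n);\ZZ)$ is a signed sum of $(k-1)$-cells indexed by rank-$(k-1)$ wide subcategories $\cW'\subset \cW$. Each such $\cW'$ arises from exactly two complementary ``directions'' in the cube associated to the cluster-tilting object of $\cW$ (corresponding to removing a relatively projective summand with a $+$ or $-$ sign, i.e., the two signs in a signed exceptional sequence), and these two contributions come with opposite orientations under the cube-to-cell identifications described in Section \ref{sec 5: simplices to cubes}. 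They therefore cancel, forcing $\partial=0$ on each cell. One could instead attempt to deduce this via the locally CAT(0) cubical structure, using a discrete Morse matching on the cube poset whose critical cells are exactly one per wide subcategory; pinning down that matching, however, essentially requires the same sign analysis and so remains the genuine technical hurdle.
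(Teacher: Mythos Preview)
Your central claim---that the cellular boundary map of $X(A_n)$ in the minimal cell structure vanishes---is false already for $n=2$. There the cells are one $0$-cell, three $1$-cells $e_{S_1},e_{S_2},e_{P_2}$, and one $2$-cell coming from $\cW=\text{mod-}\Lambda$, attached by the pentagon relation $x_{S_1}x_{S_2}=x_{S_2}x_{P_2}x_{S_1}$. Abelianising, the cellular boundary of the $2$-cell is $(e_{S_1}+e_{S_2})-(e_{S_2}+e_{P_2}+e_{S_1})=-e_{P_2}\neq 0$. This matches the table in the paper: for $A_2$ the homology ranks are $1,2,0$, not $1,3,1$. The ``two opposite contributions cancel'' heuristic does not apply here because the dual polytopes $E(\rho)$ are pentagons, hexagons, etc., not cubes, and their facets do not come in signed pairs.

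This error propagates to your combinatorics: the number of rank-$k$ wide subcategories of $\text{mod-}KA_n$ is the Narayana number $N(n{+}1,n{+}1{-}k)$, which is \emph{not} the ballot number appearing as the rank of $H_k$ in the theorem (compare $1,6,6,1$ with $1,3,2,0$ for $n=3$). So even the cell count does not match the answer you are aiming for. The actual argument in \cite{ITW16}, to which this paper defers, does not proceed by showing the differential is zero; it analyses the attaching maps and produces an explicit basis for the homology. What survives from your proposal is the first step (reduction to the cellular chain complex of $X(A_n)$ via the $K(\pi,1)$ hypothesis) and the observation that the cell structure is orientation-independent; the rest needs to be replaced.
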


The rank of $H^k(X(Q))$ for $Q$ a quiver of type $A_n$ is given as follows for $n\le 8$.
\[
\begin{array}{ccccccc}
n  & rk\,H^0& rk\,H^1& rk\,H^2& rk\,H^3& rk\,H^4\\
0  & 1 & 0\\
1  & 1 & 1\\
2  & 1 & 2 \\
3  & 1 & 3 & 2\\
4  & 1 & 4 & 5\\
5  & 1 & 5 & 9 & 5\\
6  & 1 & 6 & 14 & 14\\
7  & 1 & 7 & 20 & 28 & 14\\
8  & 1 & 8 & 27 & 48 & 42\\
\end{array}
\]

The picture groups themselves depend on the orientation. Computer calculations show that the picture groups for $A_3$ with orientation $1\to 2\leftarrow 3$ and $1\to 2\to3$ are not isomorphic. However, the integer cohomology rings of these picture groups are isomorphic for any two orientations of $A_n$. We suspect that similar statements hold for all Dynkin quivers.

{
\section{Simplices to cubes}\label{sec 5: simplices to cubes}

We summarize the results of the previous sections.

Given an hereditary algebra $\Lambda$ of finite representation type, its picture space is determined by the valued quiver $Q$ of $\Lambda$. The indecomposable $\Lambda$-modules are $M_\alpha$ where $\alpha\in \Phi^+(Q)$ runs over all positive roots of the root system of the Lie algebra associated to $Q$. For each $M_\alpha$ we have the semi-invariant domain $D(M_\alpha)\subset \RR^n$. We take the intersection with the unit sphere $S^{n-1}$ and we get the semi-invariant picture
\[
    \Sigma(Q)=\cup_\alpha D(M_\alpha)\cap S^{n-1}.
\]
This set gives the $(n-2)$-skeleton of a triangulation of $S^{n-1}$. The $(n-1)$-simplices of this triangulation are labeled with the cluster-tilting objects in the cluster category of $\Lambda$. These labels are distinct. For example, there are 14 triangles in $\Sigma(A_3)\subset S^2$. Let $C\Sigma (Q)$ be the convex hull of $\Sigma(Q)$ in $\RR^n$. This is the cone on $\Sigma(Q)$ with cone point the origin in $\RR^n$. Each $(n-1)$-simplex in $\Sigma(Q)$ gives an $n$-simplex in $C\Sigma(Q)$. For $A_3$ this space $C\Sigma(A_3)$ is a union of 14 tetrahedra.

Then, we take the dual $D\Sigma(Q)$ of the triangulation of the sphere $S^{n-1}$.
\begin{enumerate}
    \item $D\Sigma(Q)$ has one vertex for each $n-1$ simplex in $\Sigma(Q)$. These points are unlabeled.
    \item $D\Sigma(Q)$ has one oriented edge dual to each $n-2$ simplex $\lambda$ which is contained in the normally oriented $D(M_\alpha)$ for a unique $\alpha$. [The normal orientation of $D(M_\alpha)$ is given by the vector $\alpha$ which is perpendicular to $D(M_\alpha)$.] This edge is labeled $e_\alpha$.
    \item For each $k$-simplex $\rho$ in $\Sigma(Q)$, we have the dual cell $E(\rho)$ which is an $n-k-1$ cell in $D\Sigma(Q)$. This dual cell $E(\rho)$ is labeled with the wide subcategory $\cW(\rho)$ which is the class of all indecomposable $\Lambda$-modules $M$ so that $\rho\subset D(M)$.
    \item $E(\rho)$ is isomorphic to $X(\cW(\rho))$, the picture space of this wide subcategory.
\end{enumerate}

The picture space is defined to be $C\Sigma(Q)$ with all dual cells on its surface with the same labels identified. Since the identifications are all on the boundary surface $S^{n-1}$ of the $n$-disk $D^n$, the interior of the $n$-disk remains a single $n$-cell. We call this the \emph{minimal cell structure} of $X(Q)$. Sometimes it is convenient to view the $n$-disk as a union of $n$-simplices, one for every cluster tilting object of $\Lambda$. We call this the \emph{cubical cell structure} of $X(Q)$.

The minimal cell structure of $X(Q)$ has one cell for every wide subcategory $\cW$ of $mod\text-\Lambda$. For example $X(A_3)$ has 14 wide subcategories giving 14 cells: 
\begin{enumerate}
    \item The single vertex given by identifying all barycenters of all triangles in $\Sigma(A_3)$. This is for the zero wide subcategory.
    \item 6 edges $e_\alpha$ for the 6 positive roots of $A_3$. The wide subcategory is $ad(M_\alpha)$ which we denote simply as $\cW=\{M_\alpha\}$.
    \item 6 wide subcategories of rank $2$ which are $M_\alpha^\perp$, explained below and 
    \item one 3-cell by definition of the minimal cell structure. This is $\cW=mod\text-\Lambda$
\end{enumerate} 
The cubical cell structure views $X(A_3)$ as a union of 14 cubes. In this section we will explain how dualizing one face of a tetrahedron makes it a cube. Figure \ref{Fig: Simplex to cube} shows a triangle $\Delta^2$ being dualized with one vertex at each of the 7 barycenters of the nonempty faces of $\Delta^2$. In the figure we label each barycenter by the vector obtained by setting each nonzero barycentric coordinate to 1. Thus, the barycenter of the face with vertices $v_i$ has a 1 in the $i$th entry and other entries equal to zero. We then add the cone point, which we think of as the barycenter of the empty face, and label it $000$.

The result is a polyhedron with vertices having the same labels as the vertices of the cube $I^3=[0,1]^3$. Also, the polyhedron has the same edges as the cube since the dual decomposition of the simplex puts as edges the straight lines joining barycenters of faces $\rho\subset \tau$ with dimensions $k$ and $k+1$. These edges are oriented and form morphisms in a category, namely the category of faces of $\Delta^{n-1}$, including the empty face, with morphisms begin inclusion maps. I.e., this is a poset category. This is easily seen to be isomorphic to the product poset category $\cI^n=\{0<1\}^n$ which we call the \emph{cube category}. 

The cube $I^n$ is the geometric realization of the cube category $\cI^n$ and the picture space $X(Q)$ with its cubical cell structure is the union of several copies of the geometric realizations of these cube category with identifications among faces of cubes. But, we can reverse the construction. The union of geometric realizations is in fact the geometric realization of the union of cubical categories. The result is a bigger cubical category called the ``cluster morphism category''.

}

{ 
\begin{figure}[htbp]
\begin{center}
\begin{tikzpicture}[xscale=.9]
\begin{scope}[xshift=-3cm] 
\coordinate (A) at (0,0);
\coordinate (B) at (2,3);
\coordinate (C) at (4,0);
\coordinate (D) at (2,-.6);
\draw[thick] (A)--(B)--(C)--(A);
\draw (D) node{$\Delta^2$};
\draw (A) node[below]{$v_0$};
\draw (B) node[right]{$v_2$};
\draw (C) node[below]{$v_1$};
\end{scope}
\begin{scope}[xshift=3cm] 
\coordinate (A) at (0,0);
\coordinate (AB) at (1,1.5);
\coordinate (B) at (2,3);
\coordinate (B0) at (2,0);
\coordinate (BB) at (2,1);
\coordinate (C) at (4,0);
\coordinate (BC) at (3,1.5);
\coordinate (D) at (2,-.65);
\coordinate (E) at (4.8,3.1);
\draw (A)--(B)--(C)--(A);
\draw[dashed](A)--(E);
\draw[thick](E)--(C) (AB)--(BB)--(B0);
\draw[very thick] (BB)--(BC)--(B)--(E);
\draw (D) node{$C\Delta^2$};
\foreach \x in {A,B,C,E,AB,BB,B0,BC}\draw (\x) node{$\bullet$};
\draw (E) node[right]{cone point};
\draw (4.8,2.8) node[right]{\tiny$000$};
\draw [very thick,->] (E)--(3.4,3.05);
\draw [very thick,->] (B)--(2.5,2.25);
\draw [very thick,->] (BC)--(2.5,1.25);
\draw (A)node[below]{\tiny$100$};
\draw (AB)node[left]{\tiny$101$};
\draw (B)node[left]{\tiny$001$};
\draw (BC)node[right]{\tiny$011$};
\draw (2,.9)node[right]{\tiny$111$};
\draw (C)node[below]{\tiny$010$};
\draw (B0)node[below]{\tiny$110$};
\end{scope}
\end{tikzpicture}
\caption{When a $k$-simplex $\Delta^k$ is dualized, its cone $C\Delta^k$ becomes a $k+1$ cube $I^{k+1}$. The vertices of the cube are the barycenters of the faces of $\Delta^k$ including the empty face. The ``barycenter'' of the empty face is the cone point of $C\Delta^k$. In the figure above, the barycenters are labeled with cubical coordinates. E.g., $101$ are the cubical coordinates assigned to the barycenter of the face spanned by the first and third vertices of the simplex.
}
\label{Fig: Simplex to cube}
\end{center}
\end{figure}
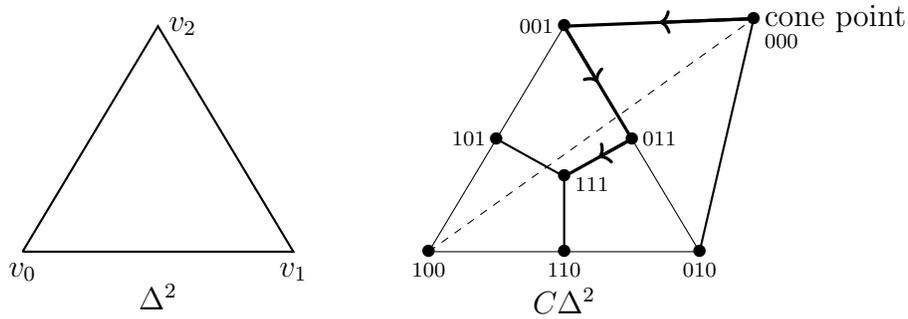
}

{\section{Cluster morphism category}\label{sec 6: signed exceptional sequences in general}

{The cube category $\cI^n$ is a graded category. Every object and morphism has a rank. Objects are sequences of $n$ zeros and ones, say $a=(a_1,a_2,\cdots,a_n)\in\{0,1\}^n$ with rank equal to the sum: $rk\, a=\sum a_i$. The rank of a morphism $a\to b$ is $rk\, b-rk\, a\ge0$ and only identity morphisms have rank 0. We are, in general, interested in factoring morphisms as compositions of morphisms of rank 1.

For the cube category $\cI^n$, there are $n!$ ways to factor the longest morphism $(0,0,\cdots,0)\to (1,1,\cdots,1)$ as a composition of rank 1 morphisms since the entries must be increased one by one in any order.
}

{Recall that we have a triangulation $\Sigma(Q)$ of the sphere $S^{n-1}$. In the cubical cell structure of the picture space $X(Q)$, the rank $k$ vertices of each cube are barycenters of $k$-simplices in $\Sigma(Q)$. Each $k$-simplex $\rho$ is labeled with the wide subcategory $\cW(\rho)$ which is given by the modules $M_\alpha$ so that $\rho\subset D(M_\alpha)$. As $k$ increases, the wide subcategories decrease since $\rho\subset \tau$ implies $\cW(\tau)\subset \cW(\rho)$. 

The cluster morphism category $\cC_\Lambda$ of $\Lambda$ is the category whose objects are finitely generated wide subcategories $\cW$ with morphisms $\cW\to\cW'$ given by choosing simplices $\rho\subset \tau$ in $\Sigma(Q)$ so that $\cW(\rho)=\cW$ and $\cW(\tau)=\cW'$. Choose an $n-1$ simplex $\sigma$ which contains $\tau$ and $\rho$. In the picture space, we have an $n$-cube corresponding to $\sigma$ and an embedding of categories
\[
	\varphi_\sigma:\cI^n\to \cC_\Lambda
\]
which maps those vertices of the cube corresponding to barycenters of $\rho,\tau$ to the objects $\cW(\rho)$ and $\cW(\tau)$ of $\cC_\Lambda$. There is only one morphism $\rho\to\tau$: the inclusion map. Thus, the morphism $\cW\to\cW'$ depends only on the choice of $\rho,\tau$. 
}

{ 
We give the original intrinsic definition of cluster morphism. We need the following well-known formula relating vertices and walls in the wall-and-chamber structure for a finite dimensional algebra. Recall that $T$ is \emph{$\tau$-rigid} if $\Hom_\Lambda(T,\tau T)=0$. In the hereditary case this is equivalent to $T$ being rigid.

\begin{lem}\label{lem: W(T) = J(T)}
(a) The $g$-vector of a $\tau$-rigid module $T$ lies in the domain $D(M)$ if and only if $\Hom_\Lambda(T,M)=0=\Hom_\Lambda(M,\tau T)$.

(b) The $g$-vector of $P[1]$ lies in $D(M)$ if and only if $\Hom_\Lambda(P,M)=0$.
\end{lem}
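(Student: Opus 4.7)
The plan is to reduce both parts to the single identity
\begin{equation*}
g(T)\cdot \undim M \;=\; \dim\Hom_\Lambda(T,M) \;-\; \dim\Hom_\Lambda(M,\tau T),
\end{equation*}
valid for any $\tau$-rigid $T$ and any module $M$. First I would establish this by applying $\Hom_\Lambda(-,M)$ to the minimal projective resolution $0\to P_1\to P_0\to T\to 0$ (which is a resolution since $\Lambda$ is hereditary). The resulting four-term exact sequence, combined with the standard identity $\dim\Hom_\Lambda(P_i,M) = \undim(P_i/\mathrm{rad}\,P_i)\cdot\undim M$, yields $g(T)\cdot\undim M = \dim\Hom_\Lambda(T,M) - \dim\Ext^1_\Lambda(T,M)$. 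Auslander--Reiten duality then converts $\Ext^1_\Lambda(T,M)$ into $\Hom_\Lambda(M,\tau T)$; the case where $T$ has projective summands is handled by splitting $T=T_0\oplus P$ with $T_0$ having no projective summand, and noting that both sides of the identity split additively in the same way since $\tau T=\tau T_0$ and $\dim\Hom_\Lambda(P,M)=\undim(P/\mathrm{rad}\,P)\cdot\undim M$.

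Given this identity, the $(\Leftarrow)$ direction of (a) is immediate: the vanishing of both Hom spaces forces $g(T)\cdot\undim M=0$, and for $M'\subset M$ left-exactness of $\Hom_\Lambda(T,-)$ gives $\Hom_\Lambda(T,M')=0$, whence $g(T)\cdot\undim M' = -\dim\Hom_\Lambda(M',\tau T)\le 0$.

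The $(\Rightarrow)$ direction of (a) is the step with the real content, and my plan is as follows. Assume $g(T)\in D(M)$, and suppose for contradiction that there is a nonzero $f\colon T\to M$. Let $N=\im f\subseteq M$. Since $T$ is $\tau$-rigid, the surjection $T\twoheadrightarrow N$ together with left-exactness of $\Hom_\Lambda(-,\tau T)$ embeds $\Hom_\Lambda(N,\tau T)$ into $\Hom_\Lambda(T,\tau T)=0$, so $\Hom_\Lambda(N,\tau T)=0$. The identity then gives
\begin{equation*}
g(T)\cdot \undim N \;=\; \dim\Hom_\Lambda(T,N) \;\ge\; 1,
\end{equation*}
which contradicts $N\subseteq M$ and $g(T)\in D(M)$. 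Hence $\Hom_\Lambda(T,M)=0$. Plugging this back into $g(T)\cdot\undim M=0$ forces $\dim\Hom_\Lambda(M,\tau T)=0$, completing (a).

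Part (b) is then essentially formal: by definition $g(P[1]) = -\undim(P/\mathrm{rad}\,P)$, so for any module $N$ one has $g(P[1])\cdot\undim N = -\dim\Hom_\Lambda(P,N)\le 0$. The submodule inequalities in \eqref{eq: def of D(M)} are therefore automatic, and membership $g(P[1])\in D(M)$ reduces to the hyperplane condition $g(P[1])\cdot\undim M=0$, which is exactly $\Hom_\Lambda(P,M)=0$. The main obstacle, if any, is the bookkeeping around Auslander--Reiten duality for $\tau$-rigid modules with projective summands; once that is cleanly set up, the rest of the argument is short and combinatorial.
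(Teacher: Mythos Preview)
Your proof is correct and follows essentially the same line as the paper's: both hinge on the four-term sequence coming from a minimal projective presentation of $T$, and both obtain the contradiction in the $(\Rightarrow)$ direction by looking at the image $N=\im f$ of a hypothetical nonzero $f\colon T\to M$. The only cosmetic difference is the order of the two ingredients in that step: you use $\tau$-rigidity first (via $\Hom(N,\tau T)\hookrightarrow\Hom(T,\tau T)=0$) to force $g(T)\cdot\undim N>0$, whereas the paper uses the wall inequality $g(T)\cdot\undim N\le 0$ first to produce a nonzero $N\to\tau T$ and then composes with $T\twoheadrightarrow N$ to contradict $\tau$-rigidity---these are contrapositives of one another.

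One small point worth noting: the paper works directly with the Auslander--Reiten four-term sequence whose rightmost term is $D\Hom_\Lambda(M,\tau T)$, so its argument applies to $\tau$-rigid modules over an arbitrary finite-dimensional algebra (as the remark following the lemma about the Jasso category suggests). Your route through $\Ext^1_\Lambda(T,M)$ and then AR duality requires hereditariness to identify the cokernel with $\Ext^1$; this is harmless in the paper's setting but slightly less general. Your extra bookkeeping for projective summands of $T$ is unnecessary once you use the AR four-term sequence directly.
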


\begin{proof} (b) is easy. So, we assume $T$ is a $\tau$-rigid module and prove (a). Let $P\xrightarrow pQ\to T\to 0$ be a minimal projective presentation of $T$. Then, for any module $M$, we get an exact sequence:
\begin{equation}\label{eq: 4 term sequence}
0\to \Hom_\Lambda(T,M)\to \Hom_\Lambda(Q,M)\xrightarrow{p^\ast} \Hom_\Lambda(P,M)\to D\Hom_\Lambda(M,\tau T)\to 0.
\end{equation}
If $\Hom_\Lambda(T,M)=0=\Hom_\Lambda(M,\tau T)$ then $p^\ast$ is an isomorphism and
\[
	g(T)\cdot \undim M=\dim \Hom_\Lambda(Q,M)-\dim \Hom_\Lambda(P,M)=0.
\]
For $M'\subset M$, we have $\Hom_\Lambda(T,M')=0$. So, $g(M')\cdot \undim M'\le0$. So, $g(T)\in D(M)$. 

Conversely, suppose that $g(T)\in D(M)$. Then $\dim\Hom_\Lambda(T,M)=\dim\Hom_\Lambda(M,\tau T)$. So, it suffices to show $\Hom_\Lambda(T,M)=0$. Suppose not and take a nonzero $f:T\to M$. Let $M'$ be the image. By definition of $D(M)$ and \eqref{eq: 4 term sequence} for $M'$, we have $\dim \Hom_\Lambda(T,M')\le \dim \Hom_\Lambda(M',\tau T)$. So, we have a nonzero map $M'\to \tau T$. Composing with $T\onto M'$ gives a nonzero map $T\to \tau T$ contradicting that $T$ is $\tau$-rigid.
\end{proof}

Recall that, in the hereditary case, $\Hom_\Lambda(M,\tau T)=0$ is equivalent to $\Ext^1_\Lambda(T,M)=0$ by Auslander-Reiten duality. In the general case, this lemma says that $M$ is in the Jasso category $J(T\oplus P[1])=T^\perp\cap \,^\perp \tau T\cap P^\perp$ \cite{Jasso}.

Recall that the vertices of $\Sigma(Q)$ are $g$-vectors of objects $T_i$ in the cluster category and these span a simplex if and only if they are ext-orthogonal, i.e., they form a partial cluster in the cluster category of $\Lambda$.

\begin{thm}\label{thm: W(r) is right perp of T}
Suppose that $T=T_0\oplus T_1\oplus\cdots\oplus T_k$ is the partial cluster labeling the vertices of the $k$-simplex $\rho$. Then,
the wide subcategory $\cW(\rho)$ is the full subcategory of $mod\text-\Lambda$ of all modules $M$ so that $\Hom_\Lambda(T,M)=0=\Ext^1_\Lambda(T,M)$. In other words,
\[
    \cW(\rho)=T^{\perp_{01}}.
\]
\end{thm}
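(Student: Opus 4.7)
\textbf{Proof plan for Theorem \ref{thm: W(r) is right perp of T}.}

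The plan is to translate the defining condition $\rho\subset D(M)$ (equivalently, barycenter of $\rho$ lies in $D(M)$) into the Hom-Ext vanishing conditions of Lemma \ref{lem: W(T) = J(T)}, via a reduction to the vertex $g$-vectors $g(T_0),\ldots,g(T_k)$.

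\emph{Step 1 (Reducing to the vertices of $\rho$).} First I would observe that $D(M)$, as defined in \eqref{eq: def of D(M)}, is the intersection of the hyperplane $\{g\cdot\undim M=0\}$ with the closed half-spaces $\{g\cdot\undim M'\le 0\}$ for $M'\subset M$, hence a closed convex cone stable under positive scaling. In particular, if $g(T_i)\in D(M)$ for every $i$, then every nonnegative linear combination of these vectors lies in $D(M)$, so the entire simplex $\rho$ (viewed on $S^{n-1}$ as the positive cone over the $g(T_i)$, projected back) lies in $D(M)$. Conversely, by Theorem \ref{thm: semi-invariant triangulation of sphere}, each $D(M_\alpha)$ is a union of closed simplices of $\Sigma(Q)$; since the barycenter of $\rho$ lies in the relative interior of $\rho$, if it lies in $D(M)$ then the whole face $\rho$ must lie in $D(M)$, so each vertex $g(T_i)\in D(M)$. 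This yields
\[
   \cW(\rho)=\{\,M\,:\,g(T_i)\in D(M)\ \text{for all}\ i=0,\ldots,k\,\}.
\]

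\emph{Step 2 (Translate each vertex condition via Lemma \ref{lem: W(T) = J(T)}).} The components $T_i$ of a partial cluster are either rigid $\Lambda$-modules or shifted projectives $P[1]$. I would treat the two cases separately:
\begin{itemize}
\item If $T_i$ is a rigid (equivalently $\tau$-rigid) module, part (a) of Lemma \ref{lem: W(T) = J(T)} gives $g(T_i)\in D(M)$ if and only if $\Hom_\Lambda(T_i,M)=0=\Hom_\Lambda(M,\tau T_i)$. Since $\Lambda$ is hereditary, Auslander-Reiten duality converts $\Hom_\Lambda(M,\tau T_i)=0$ into $\Ext^1_\Lambda(T_i,M)=0$, which is exactly the membership $M\in T_i^{\perp_{01}}$.
\item If $T_i=P[1]$, part (b) of the lemma gives $g(T_i)\in D(M)$ if and only if $\Hom_\Lambda(P,M)=0$. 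Under the standard conventions $\Hom(P[1],M)=0$ (automatic, as $M$ is a module concentrated in degree $0$) and $\Ext^1(P[1],M)=\Hom(P,M)$, this is again the statement $M\in T_i^{\perp_{01}}$.
\end{itemize}

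\emph{Step 3 (Combine).} Taking the conjunction over $i=0,\ldots,k$ and using that $T=\bigoplus T_i$, we conclude
\[
   \cW(\rho)=\bigcap_{i=0}^{k} T_i^{\perp_{01}}=T^{\perp_{01}},
\]
which is the claim.

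\emph{Where the difficulty lies.} Steps 2 and 3 are formal applications of the lemma already proved. The only subtle point is Step~1: although convexity of the cone $D(M)$ immediately gives the implication ``vertices in $D(M)$ $\Rightarrow$ simplex in $D(M)$,'' the reverse implication for the barycenter requires the structural input of Theorem \ref{thm: semi-invariant triangulation of sphere}, namely that each $D(M)$ meets the triangulation $\Sigma(Q)$ in a union of closed simplices. Without this, one could imagine the barycenter lying on the boundary of $D(M)$ while some vertex is strictly outside; the triangulation hypothesis rules this out.
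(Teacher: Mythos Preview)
Your proof is correct and follows the same idea as the paper's one-line argument (``follows from Lemma \ref{lem: W(T) = J(T)} and the definition''): apply the lemma at each vertex $g(T_i)$ of $\rho$ and intersect. The only difference is that you are more careful than the paper about Step~1. The paper tacitly treats ``$\rho\subset D(M)$'' as the working characterization of $\cW(\rho)$ (see the sentence after Definition~\ref{def: wide subcat}), so it never isolates the passage from the barycenter condition to the vertex conditions; you correctly note that the nontrivial direction of this equivalence uses that each $D(M)$ is a subcomplex of $\Sigma(Q)$ (Theorem~\ref{thm: semi-invariant triangulation of sphere}), so that containing an interior point of $\rho$ forces containing all of $\rho$. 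This extra care is a genuine clarification, not a different method.
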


\begin{proof}
This follows from Lemma \ref{lem: W(T) = J(T)} and the definition: $\cW(\rho)$ is the set of all $M$ so that $\rho\subset D(M)$.
\end{proof}
}

{
In other words, $M$ is in the Hom-Ext-right perpendicular category of $T$, denoted $T^{\perp_{01}}$. This is well-known to be isomorphic to the module category of another hereditary algebra \cite{Schofield}. So, $\cW(\rho)=T^{\perp_{01}}$ has a cluster category.

\begin{defn}\cite{IT13}\label{def: cluster morphism}
If $\cW'\subset \cW$ are finitely generated wide subcategories of $mod\text-\Lambda$, a \emph{cluster morphism} $[T]:\cW\to \cW'$ is defined to be an isomorphism class of partial clusters $T$ in $\cW$ so that $\cW'=T^{\perp_{01}}\cap \cW$.
\end{defn}

\begin{eg}\label{eg: mod L to W(r)}
The vertices of a $k$-simplex $\rho$ in $\Sigma(Q)$ are the $g$-vectors of a partial cluster $T=T_0\oplus\cdots\oplus T_k$ for $\Lambda$ and this gives a cluster morphism $[T]:mod\text-\Lambda\to \cW(\rho)$. We will call this the \emph{cluster morphism determined by $\rho$}.
\end{eg}

Composition of cluster morphisms is very tricky. But, assuming that composition of cluster morphisms is well-defined, we can give the following easy description of how it works.}

{
Take cluster morphisms $[S]:\cW\to\cW'$ and $[R]:\cW'\to \cW''$. First, choose a simplex $\rho$ in $\Sigma(Q)$ so that $\cW=\cW(\rho)$. To find one such $\rho$, take the left Hom-Ext perpendicular category $\cW_0=\,^{\perp_{01}}\cW$. Choose a tilting module $T$ in $\cW_0$, i.e., a cluster-tilting object with no shifted projectives. The $g$-vectors of the components of $T$ span a simplex $\rho$ so that $\cW(\rho)=T^{\perp_{01}}=\cW$. Now compose the cluster morphisms
\[
    mod\text-\Lambda\xrightarrow{[T]}\cW=\cW(\rho)\xrightarrow{[S]}\cW'.
\]
This gives a cluster morphism $[T']:mod\text-\Lambda\to \cW'$ where $T'=T\oplus S'$. The components of $T'$ span another simplex $\tau$ containing $\rho$ and $[T']$ is the cluster morphism determined by $\tau$. This gives the following commuting diagram:
\[
\xymatrixrowsep{30pt}\xymatrixcolsep{30pt}
\xymatrix{
mod\text-\Lambda\ar[dr]^{[T']}\ar[d]_{[T]}&& \\
\cW= \cW(\rho)\ar[r]_{[S]}&\cW'=\cW(\tau)
	} 
\]
\begin{rem}\label{rem: key construction}
    This show that, given $\rho$ so that $\cW=\cW(\rho)$ and a cluster morphism $[S]:\cW\to \cW'$, there is a unique simplex $\tau$ containing $\rho$ so that the above diagram commutes where $[T]$ and $[T']$ are the cluster morphisms determined by $\rho$ and $\tau$. 

    Furthermore, we showed in \cite{IT13} that the components of $S$ and $S'$ uniquely determine each other, for example the component $S_i$ of $S$ corresponding to $S_i'$ in $S'$ is the unique exceptional module in $\cW$ so that $\undim S_i-\undim S_i$ is a linear combination of the dimension vectors of $T_j$, the components of $T$. Therefore, $[S]:\cW(\rho)\to \cW(\tau)$ is uniquely determined by commutativity of the above diagram. Since $[T]$ and $[T']$ are determined by $\rho$, $\tau$, we say that $[S]:\cW(\rho)\to \cW(\tau)$ is the \emph{morphism induced by inclusion} $\rho\subset\tau$.
\end{rem}
Repeating this process using $[R]: \cW'\to \cW''$, we get a unique simplex $\mu$ containing $\tau$ so that the following diagram commutes.
\[
\xymatrixrowsep{30pt}\xymatrixcolsep{30pt}
\xymatrix{
mod\text-\Lambda\ar[drr]^{[T'']}\ar[dr]_{[T']}\ar[d]_{[T]}&& \\
\cW= \cW(\rho)\ar[r]_{[S]}&\cW'=\cW(\tau)\ar[r]_{[R]} & \cW''=\cW(\mu)
	} 
\]
Here $\mu\supset \tau$ is uniquely determined by $[R]$ and the composition $[R]\circ[S]:\cW(\rho)\to \cW(\mu)$ is the morphism induced by the inclusion $\rho\subset\mu$.
}

{
We go one more step: choose a cluster-tilting object $M$ for $\cW''=\cW(\mu)$. Since $M$ is maximal, it gives a cluster morphism $[M]:\cW''\to 0$. As before, there is a unique maximal simplex $\sigma$ containing $\rho,\tau,\mu$ and the cluster morphisms $[T],[S],[R],[M]$ give a factorization of the cluster morphism $[T''']:mod\text-\Lambda\to \cW(\sigma)=0$ determined by $\sigma$:
\[
	[T''']=[M]\circ [R]\circ[S]\circ[T]:mod\text-\Lambda\to\cW(\rho)\to \cW(\tau)\to\cW(\mu)\to \cW(\sigma)=0
\]
Since these morphisms are all induced by inclusion of faces of $\sigma$, they are in the image of the embedding of the cube category corresponding to $\sigma$ into the cluster category
\[
	\varphi_\sigma:\cI^n\to \cC_\Lambda
\]
as discussed at the beginning of this section. This is the outline of the proof of the theorem:
\begin{thm}\cite{IT13,IT18}\label{main lemma}
The cluster morphism category is the union of images of the embeddings $\varphi_\sigma:\cI^n\to \cC_\Lambda$ and every morphism is in the image of one of these embeddings. But the cluster morphism $mod\text-\Lambda\to \cW(\sigma)=0$ is contained only in the image of $\varphi_\sigma$.
\end{thm}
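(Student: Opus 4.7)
The plan is to extract the theorem directly from the construction in Remark~\ref{rem: key construction}, by showing that any morphism in $\cC_\Lambda$ can be recognized as the morphism induced by an inclusion of faces inside some maximal simplex of $\Sigma(Q)$, and then arguing that the top-dimensional morphism $mod\text-\Lambda\to 0$ sees only one such simplex.

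First I would dispose of objects. Given a finitely generated wide subcategory $\cW$, Theorem~\ref{thm: W(r) is right perp of T} tells us that $\cW=T^{\perp_{01}}$ for some partial cluster $T$, and the $g$-vectors of the components of $T$ span a simplex $\rho$ in $\Sigma(Q)$ with $\cW(\rho)=\cW$. Extending $\rho$ to an $(n-1)$-simplex $\sigma$ (which exists because any partial cluster lies in a complete cluster), we see that $\cW$ occurs as $\varphi_\sigma(a)$ where $a\in\{0,1\}^n$ selects the vertices of $\rho$ among the vertices of $\sigma$. Thus every object of $\cC_\Lambda$ lies in the image of some $\varphi_\sigma$.

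Next, for a cluster morphism $[S]:\cW\to\cW'$, I would realize $\cW=\cW(\rho)$ as above and then apply Remark~\ref{rem: key construction}: there is a unique simplex $\tau\supset\rho$ in $\Sigma(Q)$ so that $\cW(\tau)=\cW'$ and $[S]$ is the morphism induced by the inclusion $\rho\subset\tau$. Choose any $(n-1)$-simplex $\sigma$ containing $\tau$; then the face relation $\rho\subset\tau\subset\sigma$ corresponds to a rank-$k$ morphism in the cube category $\cI^n$, and $\varphi_\sigma$ sends this morphism to $[S]$. This proves that every morphism of $\cC_\Lambda$ is in the image of some $\varphi_\sigma$.

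For the uniqueness clause, note that a cluster morphism $mod\text-\Lambda\to \cW(\sigma)=0$ is given by a partial cluster $T$ with $T^{\perp_{01}}=0$, which forces $T$ to have $n$ components and hence to be a complete cluster-tilting object. The $g$-vectors of its components span a unique $(n-1)$-simplex in $\Sigma(Q)$, and this simplex must be $\sigma$ itself; any other $(n-1)$-simplex $\sigma'$ would have a different vertex set and the corresponding longest morphism of $\varphi_{\sigma'}$ would be determined by a different cluster-tilting object, giving a different morphism of $\cC_\Lambda$.

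The main obstacle will be justifying the uniqueness of $\tau$ in the second step, i.e., ensuring that the morphism induced by inclusion of faces is well defined and matches the prescribed cluster morphism $[S]$. This is exactly the content of Remark~\ref{rem: key construction}, which rests on the identification (from \cite{IT13}) between components of $S$ in $\cW(\rho)$ and components of the extended cluster $T'=T\oplus S'$ via dimension-vector relations $\undim S_i'-\undim S_i \in \mathrm{span}(\undim T_j)$. Once this identification is invoked, the uniqueness of the factorization through $\varphi_\sigma$, and in the top-dimensional case the uniqueness of $\sigma$ itself, follow.
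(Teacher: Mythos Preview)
Your proposal follows essentially the same outline the paper gives just before the theorem statement: realize $\cW$ as $\cW(\rho)$ for some simplex $\rho$, invoke Remark~\ref{rem: key construction} to obtain a unique $\tau\supset\rho$ realizing $[S]$, and then extend to a maximal simplex $\sigma$ to land in the image of $\varphi_\sigma$. The uniqueness argument for the longest morphism is also the same.

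There is one small gap in your first step. You write that Theorem~\ref{thm: W(r) is right perp of T} tells us $\cW=T^{\perp_{01}}$ for some partial cluster $T$, but that theorem goes in the opposite direction: it starts from a simplex $\rho$ and identifies $\cW(\rho)$ as $T^{\perp_{01}}$. It does not assert that an arbitrary finitely generated wide subcategory arises this way. The paper handles this point differently: given $\cW$, it passes to the \emph{left} perpendicular category $^{\perp_{01}}\cW$, chooses a tilting module $T$ there (no shifted projectives), and then $T^{\perp_{01}}=\cW$ by construction, with the $g$-vectors of the summands of $T$ spanning the desired simplex. This is the missing ingredient you need to produce $\rho$; once you supply it, the rest of your argument matches the paper's.
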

}
}

\begin{cor}\cite{IT18}\label{cor: CAT0}
    The cluster morphism category is a cubical category having the property that its geometric realization is $CAT(0)$ and therefore a $K(\pi,1)$ if $\Lambda$ has finite type or is tame without tubes of rank $\ge3$.
\end{cor}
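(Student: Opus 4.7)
The plan is to deduce the corollary from Theorem 6.3 (the main lemma) by verifying Gromov's local CAT(0) criterion for cube complexes and then invoking the Cartan--Hadamard theorem. By the main lemma, $|\cC_\Lambda|$ is covered by the images of the embeddings $\varphi_\sigma\colon \cI^n \to \cC_\Lambda$, one for each maximal simplex $\sigma$ in $\Sigma(Q)$ (equivalently, each cluster-tilting object), and these cubes are glued along subcubes coming from the face inclusions described in Remark 6.2. This puts an honest cube-complex structure on $|\cC_\Lambda|$. By Gromov's theorem, a cube complex is locally CAT(0) if and only if the link of every vertex is a flag simplicial complex; once local CAT(0) is established, the universal cover is globally CAT(0) by Cartan--Hadamard, hence contractible, so $|\cC_\Lambda|$ is a $K(\pi,1)$.

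First I would identify the vertex links. A vertex of $|\cC_\Lambda|$ is a finitely generated wide subcategory $\cW$. The edges at $\cW$ in the cube structure correspond to rank one cluster morphisms through $\cW$; by Definition 6.4 these are parametrised by the individual components $T$ of partial clusters in $\cW$, i.e.\ by exceptional objects of $\cW$ and by shifted projectives $P[1]$ of $\cW$. A $k$-simplex in the link at $\cW$ consists of $k+1$ such components $T_0,\dots,T_k$ that sit together inside a single cube $\varphi_\sigma(\cI^n)$ passing through $\cW$; by the key construction of Remark 6.2 this happens precisely when the $T_i$ form a partial cluster-tilting object in the cluster category of $\cW$.

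The heart of the proof is then the flag condition: whenever a finite family $T_0,\dots,T_k$ of such rank one data is pairwise compatible (each pair $\{T_i,T_j\}$ already spans a square cube), the whole family is compatible (spans a $(k{+}1)$-cube). Pairwise compatibility is equivalent to pairwise Ext-orthogonality in the cluster category by Lemma 4.6, extended to shifted projectives via the identity $\Ext^1(P[1],-)=\Hom(P,-)$. In the hereditary setting additivity of Ext gives
\[
  \Ext^1_{\cW}\!\Bigl(\bigoplus_i T_i,\,\bigoplus_j T_j\Bigr)\;\cong\;\bigoplus_{i,j}\Ext^1_{\cW}(T_i,T_j),
\]
which vanishes as soon as every summand does, so pairwise ext-orthogonality yields joint rigidity; then $\bigoplus T_i$ is a partial cluster-tilting object and spans the required cube in $|\cC_\Lambda|$. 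This is flagness, hence local CAT(0).

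The hard part will be the tame case and understanding why we need the hypothesis ``no tubes of rank $\ge 3$''. In a tube of rank $r\ge 3$, one can find three distinct quasi-simples $T_0,T_1,T_2$ at the mouth which are pairwise Ext-orthogonal in the cluster category but for which $T_0\oplus T_1\oplus T_2$ fails to be rigid (or fails to extend to a cluster-tilting object), and such a configuration would produce a ``hollow triangle'' in some vertex link, breaking flagness. I would therefore need to check that in the tame hereditary setting with all tubes of rank $\le 2$, no such obstructing triple exists: the transjective part of the cluster category behaves as in finite type (BMRRT), and a rank $\le 2$ tube supports only pairs, not triples, of pairwise ext-orthogonal indecomposables. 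With flagness verified in all cases allowed by the hypothesis, the conclusion follows mechanically from Gromov's criterion and Cartan--Hadamard.
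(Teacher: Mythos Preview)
The paper does not actually prove this corollary; it is stated with a citation to \cite{IT18} and no argument is given in the text. Your overall strategy---Gromov's flag--link criterion for cube complexes followed by Cartan--Hadamard---is indeed the one used in \cite{IT18}, so in that sense you are on the right track. Two points, however, need correction.

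First, your link analysis is incomplete. At a vertex $\cW$ the cube complex has edges in \emph{both} directions: outgoing rank~$1$ morphisms $[T]\colon\cW\to\cW'$ (which you describe) and incoming rank~$1$ morphisms $[S]\colon\cV\to\cW$ from larger wide subcategories. The full link is the join of the ``down--link'' and the ``up--link'', and one must argue (using the $k!$ factorisation property of Lemma~\ref{lem: factoring cluster morphisms}) that every up--edge and every down--edge span a square, so that flagness reduces to checking each half separately. You have only addressed the down--link; the up--link requires its own argument and is where the genuine subtlety lies.

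Second, your diagnosis of the rank~$\ge 3$ tube obstruction is incorrect. In a tube of rank $r$ the quasi-simples satisfy $\tau S_i\cong S_{i-1}$, so $\Ext^1(S_i,S_{i-1})\neq 0$; in particular, for $r=3$ \emph{no} pair of distinct quasi-simples is Ext-orthogonal, and for any $r$ pairwise Ext-orthogonality among rigid indecomposables in a tube does imply joint rigidity. The failure of the CAT(0) condition in the presence of rank~$\ge 3$ tubes does not come from a ``hollow triangle'' of quasi-simples in the down--link; rather, it arises in the analysis of the up--link (incoming morphisms) at wide subcategories lying inside such tubes, and this is exactly the part your sketch omits.
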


\section{Signed exceptional sequences}\label{sec 7: signed exceptional sequences}

{
As a corollary of Theorem \ref{main lemma}, we see that every cluster morphism $mod\text-\Lambda\to 0$ has exactly $n!$ factorizations into rank 1 morphisms. In fact, we showed:

\begin{lem}\cite{IT13}\label{lem: factoring cluster morphisms}
Every cluster morphism $[A]$ of rank $k$ has exactly $k!$ factorizations into a composition of $k$ morphisms of rank $1$, rank 1 morphisms being those given by a single object. These factorizations are in bijection with total orderings of the components of $A$.
\end{lem}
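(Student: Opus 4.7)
The plan is to use Theorem \ref{main lemma} together with Remark \ref{rem: key construction} to convert the count into a combinatorial statement about maximal refinements of a simplex inclusion in $\Sigma(Q)$. First I would choose a simplex $\rho\subset\Sigma(Q)$ with $\cW(\rho)$ equal to the source of $[A]$ and apply Remark \ref{rem: key construction} to obtain a unique simplex $\tau\supset\rho$ for which $[A]:\cW(\rho)\to\cW(\tau)$ is the cluster morphism induced by the inclusion $\rho\subset\tau$. Because $[A]$ has rank $k$, the simplex $\tau$ has exactly $k$ more vertices than $\rho$, and the explicit description of the inducing morphism in Remark \ref{rem: key construction} provides a canonical bijection between these $k$ new vertices and the components $A_1,\ldots,A_k$ of $A$.

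Second, I would establish that factorizations $[A]=[B_k]\circ\cdots\circ[B_1]$ into rank $1$ cluster morphisms correspond bijectively to maximal chains of simplex inclusions $\rho=\rho_0\subsetneq\rho_1\subsetneq\cdots\subsetneq\rho_k=\tau$, where $\rho_i$ is obtained from $\rho_{i-1}$ by adjoining a single vertex and $[B_i]$ is the rank $1$ morphism $\cW(\rho_{i-1})\to\cW(\rho_i)$ induced by inclusion. The forward direction is immediate from Remark \ref{rem: key construction} applied inductively. For the reverse direction I would invoke Theorem \ref{main lemma}: a given factorization of $[A]$ is realized inside the image of some embedding $\varphi_\sigma:\cI^n\to\cC_\Lambda$ for a top-dimensional simplex $\sigma\supset\tau$, and inside the cube category $\cI^n$ a morphism whose endpoints differ in exactly $k$ coordinates admits precisely $k!$ factorizations into rank $1$ morphisms, each one an edge-path through the $k$-subcube spanned by those coordinates. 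Pulling back these edge-paths along $\varphi_\sigma$ produces maximal chains between $\rho$ and $\tau$.

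Finally, I would observe that a maximal chain $\rho=\rho_0\subsetneq\cdots\subsetneq\rho_k=\tau$ is nothing other than a total ordering of the $k$ new vertices of $\tau\setminus\rho$, and hence, via the bijection from the first step, a total ordering of the components of $A$. This simultaneously yields the count $k!$ and the claimed indexing of factorizations by orderings of the $A_i$.

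The main obstacle will be the reverse direction of the second step: ensuring that an abstract factorization in $\cC_\Lambda$ is actually realized inside a single cube image $\varphi_\sigma(\cI^n)$ rather than being glued together from pieces of different cubes, and that the vertex adjoined at each stage lies inside $\tau$ rather than outside. The remedy is to combine Theorem \ref{main lemma} with the uniqueness statement of Remark \ref{rem: key construction} applied to the tail $[B_k]\circ\cdots\circ[B_{i+1}]$: the unique simplex produced from this tail, together with $\rho$, must be $\tau$ itself, which forces each intermediate target $\cW_i$ to equal $\cW(\rho_i)$ for a uniquely determined simplex $\rho_i$ with $\rho_{i-1}\subset\rho_i\subset\tau$.
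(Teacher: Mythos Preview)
Your proposal is correct and follows essentially the same approach as the paper. The paper's proof is a single sentence: the morphism $[A]$ determines an embedding $\cI^k\to\cC_\Lambda$, and the $k!$ factorizations of the longest morphism in $\cI^k$ map bijectively to the factorizations of $[A]$. Your argument unpacks exactly this, routing through the ambient embedding $\varphi_\sigma:\cI^n\to\cC_\Lambda$ of Theorem~\ref{main lemma} and using Remark~\ref{rem: key construction} to pin down the intermediate simplices; in particular, the obstacle you identify (that an arbitrary factorization might not a priori live inside a single cube) and your remedy via the uniqueness in Remark~\ref{rem: key construction} are precisely the details the paper's one-line proof leaves implicit.
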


\begin{proof}
    The cluster morphism $[A]$ determines an embedding $\cI^k\to \cC_\Lambda$ and the $k!$ factorizations of the unique rank $k$ morphism in $\cI^k$ into rank 1 morphisms map to the factorizations of $[A]$ into rank 1 morphisms.
\end{proof}

\begin{defn}\label{def: signed exc seq}
    A \emph{signed exceptional sequence} in $\cW$ is a factorization of any cluster morphism
\[
	[X]:\cW\to \cW'
\]
into rank 1 cluster morphisms. The signed exceptional sequence is called \emph{complete} if $\cW=mod\text-\Lambda$ and $\cW'=0$.
\end{defn}

\begin{rem}\label{rem: rank one morphism}
A rank 1 morphism $[X]:\cW\to \cW'$ consists of one object $X$ in the cluster category of $\cW$. This object must be either an exceptional object (module) in $\cW$ or a shifted projective $P[1]$ where $P$ is an indecomposable projective object of $\cW$. We call this a \emph{relatively projective} object. When we factor a rank $k$ cluster morphism $\cW_0\to \cW_k$ into rank 1 morphisms we get:
\[
	\cW_0\xrightarrow{[X_1]}\cW_1\xrightarrow{[X_2]} \cW_2\cdots \xrightarrow{[X_k]}\cW_k
\]
Since $\cW_i=(X_1\oplus\cdots\oplus X_i)^{\perp_{01}}\cap \cW_0$, the sequence
\[
	X_k,X_{k-1},\cdots,X_1
\]
forms an exceptional sequence ($\Hom_\Lambda(X_i,X_j)=0=\Ext^1_\Lambda(X_i,X_j)$ for $i<j$) with the embellishment that some terms could be shifted if they are relatively projective. Thus, a signed exceptional sequence is the same as an exceptional sequence in which the relatively projective terms are allowed to be negative (shifted).
\end{rem}

\begin{thm}\cite{IT13}\label{thm: bijection ordered clusters}
There is a bijection between (complete) signed exceptional sequences and ordered clusters.
\end{thm}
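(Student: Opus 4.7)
The plan is to realize the bijection as the tautological correspondence between complete signed exceptional sequences, viewed as rank 1 factorizations of a single cluster morphism $mod\text-\Lambda \to 0$, and ordered clusters, viewed via the data of the total cluster morphism together with an ordering of its components. Essentially everything will follow from Lemma~\ref{lem: factoring cluster morphisms} once the endpoints of the factorizations are identified with cluster-tilting objects.

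First I would unpack the definition. A complete signed exceptional sequence is, by Definition~\ref{def: signed exc seq}, a factorization
\[
mod\text-\Lambda = \cW_0 \xrightarrow{[X_1]} \cW_1 \xrightarrow{[X_2]} \cdots \xrightarrow{[X_n]} \cW_n = 0
\]
into rank 1 cluster morphisms. Composing these gives a single cluster morphism $[T]: mod\text-\Lambda \to 0$ of rank $n$, which by Definition~\ref{def: cluster morphism} is an isomorphism class of partial clusters $T$ in $mod\text-\Lambda$ satisfying $T^{\perp_{01}} = 0$; such a $T$ is precisely a cluster-tilting object of the cluster category of $\Lambda$, i.e., a cluster. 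So each complete signed exceptional sequence determines a cluster $T$ together with extra data.

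Next I would describe the map in each direction. Given an ordered cluster $(T_{\sigma(1)}, \ldots, T_{\sigma(n)})$ with underlying cluster $T = T_1\oplus\cdots\oplus T_n$, take the cluster morphism $[T]: mod\text-\Lambda \to 0$ and apply Lemma~\ref{lem: factoring cluster morphisms} to obtain the unique factorization into rank 1 morphisms whose sequence of components, read in the order induced by the factorization, is $T_{\sigma(1)}, \ldots, T_{\sigma(n)}$ (as in Remark~\ref{rem: rank one morphism}, each component may be shifted or not according to whether it is relatively projective). Conversely, from a complete signed exceptional sequence, forget the intermediate $\cW_i$ to obtain an ordered list $(X_1,\ldots,X_n)$; the composition is a cluster morphism $[T]: mod\text-\Lambda \to 0$, and the underlying modules of the $X_i$ are exactly the components of $T$, equipped with the order coming from the factorization. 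This gives the ordered cluster.

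The two maps are mutually inverse because Lemma~\ref{lem: factoring cluster morphisms} asserts a bijection between the $n!$ rank 1 factorizations of a given rank $n$ cluster morphism and the $n!$ total orderings of its components; thus passing through the cluster morphism $[T]$ and back recovers both the ordering and the (signed) terms. The main obstacle is making sure that the signs (i.e., which $X_i$ are shifted projectives $P[1]$ rather than modules) are correctly recorded by the ordered cluster; here the point is that, in the factorization $[T] = [X_n]\circ\cdots\circ[X_1]$, the object $X_i$ is relatively projective in $\cW_{i-1}$ precisely when the corresponding component of $T$ appears as a shifted projective, and this sign is intrinsic to the component, not extra data, because the cube embedding $\varphi_\sigma: \cI^n \to \cC_\Lambda$ of Theorem~\ref{main lemma} uniquely determines, at each vertex of $\cI^n$, whether the incoming rank 1 edge is labeled by a module or by its shift. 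Thus the sign assignment is determined by the ordering, and no additional choices intervene, giving the claimed bijection.
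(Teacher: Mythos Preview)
Your high-level strategy---derive the bijection from Lemma~\ref{lem: factoring cluster morphisms} by identifying cluster morphisms $mod\text-\Lambda\to 0$ with cluster-tilting objects---is exactly the paper's approach; the theorem is essentially a restatement of that lemma in the special case $\cW=mod\text-\Lambda$, $\cW'=0$, and the paper does not give a separate proof beyond that.

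However, your description of the bijection contains a genuine error. You write that ``the underlying modules of the $X_i$ are exactly the components of $T$,'' and later that ``$X_i$ is relatively projective in $\cW_{i-1}$ precisely when the corresponding component of $T$ appears as a shifted projective.'' Neither statement is true. The rank~1 labels $X_i$ of the factorization are objects in the successive wide subcategories $\cW_{i-1}$, and they are in general \emph{different} modules from the components $T_j$ of the cluster. Look at the paper's own example: the cluster is $T=S_1\oplus S_3\oplus P_1$, all unshifted modules, yet one of the six associated signed exceptional sequences is $(S_2[1],I_2,S_3)$, whose terms $S_2$ and $I_2$ do not appear in $T$ at all, and which carries a shift even though no component of $T$ is a shifted projective. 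The correspondence between the ordered components of $T$ and the terms $X_i$ comes from Remark~\ref{rem: key construction}: each $X_i$ is the unique object of $\cW_{i-1}$ whose dimension vector agrees with that of the corresponding $T_j$ modulo the span of the earlier $T$-components. The sign on $X_i$ records whether $X_i$ is relatively projective in $\cW_{i-1}$ and has nothing direct to do with whether $T_j$ is a shifted projective in $mod\text-\Lambda$.

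So the fix is simply to stop after invoking Lemma~\ref{lem: factoring cluster morphisms}: that lemma already gives a bijection between factorizations and total orderings of the components of $T$, and you need not (and should not) claim any equality between the $X_i$ and the $T_j$. Your attempt to justify the sign assignment is then unnecessary---the signs are part of the factorization data, which Lemma~\ref{lem: factoring cluster morphisms} says is determined by the ordering.
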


We give some examples to illustrate this. We use semi-invariant pictures with chambers labeled with torsion classes. By a \emph{chamber} we mean a path component of the complement in $\RR^n$ of the union of the set of walls. In this paper, there are only finitely many walls. So, the chambers are open sets and our definition agrees with standard definition \cite{BST}. Recall that a torsion class $\cT$ in $mod\text-\Lambda$ is a class of modules closed under quotients and extensions. Torsion classes form a poset under inclusion. This poset is a lattice in the finite case. The standard construction associates a torsion class $\cT(\cU)$ to each chamber $\cU$. We give an alternate formula.

\begin{prop}\cite{GhostsI}\label{def of torsion class}
    To find the torsion class associated to a chamber, choose any $g$ in the chamber. Then $\cT$ is the collection of all modules $M$ so that $g\cdot\undim M'>0$ for all nonzero quotient modules $M'$ of $M$, including $M$.
\end{prop}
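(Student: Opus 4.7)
Proof plan.

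The plan is to proceed in two steps: first, to verify that the set
\[
\cT_g := \{M \in mod\text-\Lambda : g\cdot\undim M' > 0 \text{ for every nonzero quotient } M' \text{ of } M\}
\]
is a torsion class for any $g \in \RR^n$, and second, to identify $\cT_g$ with the standard chamber torsion class $\cT(\cU)$ for $g$ in a chamber $\cU$, which in particular will yield independence of $\cT_g$ from the choice of $g$ in $\cU$.

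For the first step, closure under quotients is immediate, since a nonzero quotient of a quotient of $M$ is again a nonzero quotient of $M$. For closure under extensions I would take a short exact sequence $0 \to A \to M \to B \to 0$ with $A, B \in \cT_g$ and any nonzero quotient $\pi : M \twoheadrightarrow M'$; setting $A' := \pi(A)$ gives an induced sequence $0 \to A' \to M' \to B' \to 0$ in which $A'$ is a quotient of $A$ and $B'$ is a quotient of $B$. Additivity yields $g\cdot\undim M' = g\cdot\undim A' + g\cdot\undim B'$; at least one summand is nonzero since $M'\neq 0$, and each nonzero summand is strictly positive by hypothesis on $A, B$.

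For the second step, under the standard chamber--$\tau$-tilting bijection, $\cU$ corresponds to some support $\tau$-tilting module $T$ (equivalently, a cluster-tilting object in the hereditary case) and $\cT(\cU) = \operatorname{Fac}(T)$. The inclusion $\operatorname{Fac}(T) \subseteq \cT_g$ is the easy direction: $\operatorname{Fac}(T)$ is closed under quotients, so it suffices to check that $g\cdot\undim N > 0$ for every nonzero $N \in \operatorname{Fac}(T)$. Writing $g$ as a positive linear combination of the $g$-vectors of the components $T_i$ of $T$ and invoking Lemma \ref{lem: W(T) = J(T)} together with the fact that any nonzero $N \in \operatorname{Fac}(T)$ admits a nonzero map from some $T_i$, each term $g(T_i)\cdot \undim N$ is nonnegative and at least one is strictly positive. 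For the reverse inclusion, given $M \notin \operatorname{Fac}(T)$, the canonical quotient $M \twoheadrightarrow M/t(M)$ onto the torsion-free part for the pair $(\operatorname{Fac}(T), \operatorname{Fac}(T)^\perp)$ is nonzero and, by the dual chamber description, satisfies $g\cdot\undim(M/t(M)) \le 0$, witnessing $M \notin \cT_g$.

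The main obstacle is precisely the inclusion $\cT_g \subseteq \operatorname{Fac}(T)$: exhibiting a quotient $M'$ of $M \notin \operatorname{Fac}(T)$ with $g\cdot\undim M' \le 0$ requires the full stability-theoretic interpretation of the wall-and-chamber structure (viewing $g$ as a King stability function and relating $\cT_g$ to the positive Harder--Narasimhan factors). I expect to handle this either by invoking the chamber/$\tau$-tilting correspondence of Br\"ustle--Smith--Treffinger, Asai and Yurikusa, or by a more direct sign argument in the hereditary finite-type setting using Lemma \ref{lem: W(T) = J(T)} together with the mutation behaviour of adjacent chambers.
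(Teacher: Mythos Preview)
Your plan is essentially correct, but it takes a genuinely different route from the paper's proof.

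The paper argues inductively along the Hasse diagram of torsion classes. Writing $\cS(\cU)$ for the torsion class defined by the formula, the paper cites \cite{GhostsI} for the facts that $\cS(\cU)$ is well-defined on chambers and separates chambers, and cites \cite{BCZ} for the characterization that, whenever a wall $D(M)$ separates adjacent chambers $\cU\subset\cU'$ (with $\cU'$ on the positive side), the standard torsion class $\cT(\cU')$ is the \emph{minimal} torsion class containing $\cT(\cU)$ and $M$. The paper then verifies, again via \cite{GhostsI}, that $\cS(\cU')$ satisfies the same minimality property relative to $\cS(\cU)$ and $M$: the key point is that any $X\in \cS(\cU')\setminus\cS(\cU)$ surjects onto $M$ with kernel in $\cS(\cU)$. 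Since both $\cT$ and $\cS$ are determined by the same recursion starting from the zero torsion class, they coincide.

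Your approach, by contrast, identifies $\cT_g$ directly with $\operatorname{Fac}(T)$ for the support $\tau$-tilting pair $T$ attached to the chamber, using that $g$ lies in the open cone on the $g$-vectors of the summands $T_i$. This is more elementary in that it avoids the wall-crossing recursion and the external input from \cite{BCZ}; both inclusions follow from the single identity $g(T_i)\cdot\undim N=\dim\Hom_\Lambda(T_i,N)-\dim\Hom_\Lambda(N,\tau T_i)$ (implicit in the proof of Lemma~\ref{lem: W(T) = J(T)}). Your hard inclusion is in fact already complete: for $N=M/t(M)$ in the torsion-free class $T^\perp$ one has $\Hom_\Lambda(T_i,N)=0$, so each $g(T_i)\cdot\undim N\le 0$, and similarly for shifted-projective summands; there is no need to invoke Harder--Narasimhan filtrations or the Br\"ustle--Smith--Treffinger machinery you list as alternatives. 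The paper's argument, on the other hand, yields a finer structural statement about how $\cS$ changes across a single wall, which your direct comparison does not.
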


A word about terminology: In \cite{GhostsI} the condition is that $g\cdot M'>0$ for all ``weakly admissible'' quotients $M'$ of $M$. In our case, we have an abelian category ($mod\text-\Lambda$) so, all morphisms are admissible (have kernel, image and cokernel in the category). 

\begin{proof} The given formula defines a torsion class $\cT$ for any point $g\in\RR^n$. 
\begin{enumerate}
    \item Let $0\to A\to B\to C\to 0$ be a short exact sequence with $A,C\in\cT$. Then $g\cdot \undim B=g\cdot \undim A+g\cdot \undim C>0$. Also, any submodule $B'$ of $B$ is an extension of a submodule $A'$ of $A$ by a submodule $C'$ of $C$. Indeed, $A'=A\cap B'$ and $C'$ is the image of $B'$ in $C$. By the Snake Lemma, we have an exact sequence $0\to A/A'\to B/B'\to C/C'\to 0$. So, 
    \[
    g\cdot\undim (B/B')=g\cdot \undim (A/A')+g\cdot \undim (C/C')>0.
    \]
    So $B/B'\in \cT$ and $\cT$ is closed under extensions.
    \item $\cT$ is closed under taking quotients by definition since any quotient of a quotient of $X\in\cT$ is a quotient of $X$.
\end{enumerate}

Let $\cS(\cU)$ be the torsion class associated to the chamber $\cU$ according to the formula above. For $\Lambda$ of finite representation type, this is independent of the choice of $g\in\cU$ \cite[Thm 1.8]{GhostsI}. Also, $\cS(\cU)$ is different for different $\cU$ by \cite[Thm 2.6]{GhostsI}.

It is known that, when a semi-invariant wall $D(M)$ separates two chambers $\cU$, $\cU'$ with $\cU'$ on the positive side then the torsion class $\cT(\cU')$ contains the torsion class $\cT(\cU)$ and is the minimal such torsion class which also contains $M$ \cite{BCZ}. The class $\cS(\cU)$ has these properties. Indeed, it is shown in \cite[Lemma 1.9]{GhostsI} that $M$ lies in $\cS(\cU')$ but not in $\cS(\cU)$.  In \cite[Lemma 1.11]{GhostsI} it is shown that for any $X\in \cS(\cU')$ not in $\cS(\cU)$, there is an epimorphism $X\onto M$ whose kernel is shown to lie in $\cS(\cU)$ in the proof of \cite[Thm 2.1]{GhostsI}. Therefore, $\cS(\cU')$ is the smallest torsion class containing $\cS(\cU)$ which also contains $M$. Since $\cT(\cU)$ and $\cS(\cU)$ have the same properties, they are equal.
\end{proof}
}

{
Figure \ref{Fig: Torsion classes in A2} shows the semi-invariant picture and the Hasse diagram for the lattice of torsion classes for the quiver $Q: 1\to 2$. Following the notation in \cite{DIRRT} with circles and squares reversed, we circle the torsion classes and put ``brick labels'' on the edges of the Hasse diagram in boxes. They are bricks since we only need $D(M)$ when $M$ is a brick.

{
\begin{figure}[htbp]
\begin{center}
\begin{tikzpicture}
\coordinate (A) at (0,0);
\coordinate (B) at (-2,3);
\coordinate (C) at (0,6);
\coordinate (D) at (2.8,1.8);
\coordinate (E) at (2.8,4.2);
\coordinate (AB) at (-1,1.5);
\coordinate (BC) at (-1,4.5);
\coordinate (AD) at (1.4,.9);
\coordinate (DE) at (2.8,3);
\coordinate (EC) at (1.4,5.1);
\draw[thick] (A)--(B)--(C)--(E)--(D)--(A);

\foreach \y in {A,B,C,D,E}
\draw[fill,white] (\y) ellipse[x radius=8mm,y radius=4mm];
\foreach \y in {A,B,C,D,E}
\draw[thick] (\y) ellipse[x radius=8mm,y radius=4mm];
\draw (A) node{0};
\draw (B) node{$S_2$};
\draw (D) node{$S_1$};
\draw (C) node{$mod\,\Lambda$};
\draw (E) node{$S_1,P_1$};

\begin{scope}[xshift=-1cm,yshift=1.5cm] 
\draw[fill,white](-.4,-.3)--(.4,-.3)--(.4,.3)--(-.4,.3)--(-.4,-.3);
\draw[thick] (-.4,-.3)--(.4,-.3)--(.4,.3)--(-.4,.3)--(-.4,-.3);
\end{scope}
\draw (AB) node{$S_2$};
\begin{scope}[xshift=-1cm,yshift=4.5cm] 
\draw[fill,white](-.4,-.3)--(.4,-.3)--(.4,.3)--(-.4,.3)--(-.4,-.3);
\draw[thick] (-.4,-.3)--(.4,-.3)--(.4,.3)--(-.4,.3)--(-.4,-.3);
\end{scope}
\draw (BC) node{$S_1$};
\begin{scope}[xshift=1.4cm,yshift=.9cm] 
\draw[fill,white](-.4,-.3)--(.4,-.3)--(.4,.3)--(-.4,.3)--(-.4,-.3);
\draw[thick] (-.4,-.3)--(.4,-.3)--(.4,.3)--(-.4,.3)--(-.4,-.3);
\end{scope}
\draw (AD) node{$S_1$};

\begin{scope}[xshift=2.8cm,yshift=3cm] 
\draw[fill,white](-.4,-.3)--(.4,-.3)--(.4,.3)--(-.4,.3)--(-.4,-.3);
\draw[thick] (-.4,-.3)--(.4,-.3)--(.4,.3)--(-.4,.3)--(-.4,-.3);
\end{scope}
\draw (DE) node{$P_1$};

\begin{scope}[xshift=1.4cm,yshift=5.1cm] 
\draw[fill,white](-.4,-.3)--(.4,-.3)--(.4,.3)--(-.4,.3)--(-.4,-.3);
\draw[thick] (-.4,-.3)--(.4,-.3)--(.4,.3)--(-.4,.3)--(-.4,-.3);
\end{scope}
\draw (EC) node{$S_2$};


\begin{scope}[xshift=8cm]
{\coordinate (A) at (0,0);
\coordinate (B) at (-2,3);
\coordinate (C) at (0,6);
\coordinate (D) at (2.8,1.8);
\coordinate (E) at (2.8,4.2);
\coordinate (AB) at (-1,1.5);
\coordinate (AB2) at (-2,0);
\coordinate (BC2) at (-2.1,6);
\coordinate (BC) at (-1,4.5);
\coordinate (AD) at (1.4,.9);
\coordinate (AD2) at (2.1,0);
\coordinate (DE) at (2.8,3);
\coordinate (EC) at (1.4,5.1);
\coordinate (EC2) at (2,6);
\coordinate (O) at (0,3);
\draw[very thick] (BC2)--(AD2);
\draw[very thick] (EC2)--(AB2);
\draw[very thick] (O)--(4,3);
\foreach \y in {A,B,C,D,E}
\draw[fill,white] (\y) ellipse[x radius=8mm,y radius=4mm];
\foreach \y in {A,B,C,D,E}
\draw[thick] (\y) ellipse[x radius=8mm,y radius=4mm];
\draw (A) node{0};
\draw (B) node{$S_2$};
\draw (D) node{$S_1$};
\draw (C) node{$mod\,\Lambda$};
\draw (E) node{$S_1,P_1$};

\begin{scope}[xshift=-1cm,yshift=1.5cm] 
\draw[fill,white](-.6,-.3)--(.6,-.3)--(.6,.3)--(-.6,.3)--(-.6,-.3);
\end{scope}
\draw (AB) node{$D(S_2)$};
\begin{scope}[xshift=-1cm,yshift=4.5cm] 
\draw[fill,white](-.6,-.3)--(.6,-.3)--(.6,.3)--(-.6,.3)--(-.6,-.3);
\end{scope}
\draw (BC) node{$D(S_1)$};
\begin{scope}[xshift=1.4cm,yshift=.9cm] 
\draw[fill,white](-.6,-.3)--(.6,-.3)--(.6,.3)--(-.6,.3)--(-.6,-.3);
\end{scope}
\draw (AD) node{$D(S_1)$};

\begin{scope}[xshift=2.8cm,yshift=3cm] 
\draw[fill,white](-.6,-.3)--(.6,-.3)--(.6,.3)--(-.6,.3)--(-.6,-.3);
\end{scope}
\draw (DE) node{$D(P_1)$};

\begin{scope}[xshift=1.4cm,yshift=5.1cm] 
\draw[fill,white](-.6,-.3)--(.6,-.3)--(.6,.3)--(-.6,.3)--(-.6,-.3);
\end{scope}
\draw (EC) node{$D(S_2)$};
}
\end{scope}

\end{tikzpicture}
\caption{Lattice of torsion classes for $A_2: 1\to2$ on left with torsion classes in ovals. Brick labels of edges in this Hasse diagram are in boxes. On the right is the semi-invariant picture with torsion class labels on chambers.}
\label{Fig: Torsion classes in A2}
\end{center}
\end{figure}
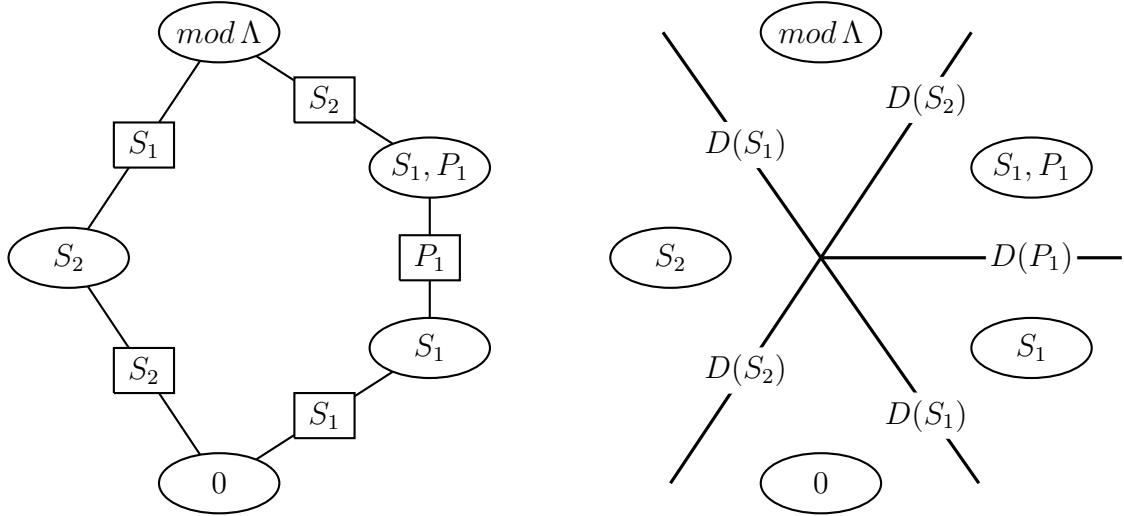
}

In Figure \ref{Fig: Stability diagram for A3} we show the semi-invariant picture for the $A_3$ quiver $1\to 2\to 3$ with walls $D(M)$ labeled with $M$ in a box and torsion classes in ovals in some of the chambers. {\color{blue}In the remainder of this paper we will examine
the signed exceptional sequences associated with the triangle labelled $T_3$ in Figure \ref{Fig: Edges of cube are cluster morphisms} below both algebraically and geometrically.} The cluster-tilting object at this simplex is $S_1\oplus S_3\oplus P_1$ since $a,b,c$ are the $g$-vectors of these modules. Figure \ref{Fig: Edges of cube are cluster morphisms} shows a blow up of this triangle with cube category showing the associated cluster morphisms. The cone of this triangle has cone point the origin in $\RR^3$. The solid 3-ball is the union of these cubes which are the cones over the 14 triangles in $S^2$ shown in Figure \ref{Fig: Stability diagram for A3}.

{
\begin{figure}[htbp]
\begin{center}
\begin{tikzpicture}[scale=1] 
\begin{scope}[xshift=6cm]
\coordinate(v0) at (0,-2.5);
\coordinate(v1) at (-1,-1);
\coordinate(v2) at (0,.75);
\coordinate(v3) at (1,2.5);
\coordinate(v4) at (1,-1);
\coordinate(v5) at (2.2,0);
\coordinate(v6) at (2.2,1.5);
\coordinate(v01) at (-.5,-1.75); 
\coordinate(v04) at (.5,-1.75); 
\coordinate(v12) at (-.5,-.12); 
\coordinate(v42) at (.5,-.12); 
\coordinate(v23) at (.5,1.62); 
\coordinate(v45) at (1.5,-.5); 
\coordinate(v63) at (1.5,2); 
\coordinate(v56) at (2.2,.75); 
\draw[thick] (v4)--(v0)--(v1)--(v2)--(v3)--(v6)--(v5)--(v4)--(v2);
\foreach \x/\y in {v0/T_0,v1/T_1,v2/T_2,v3/T_3,v4/T_4,v5/T_5,v6/T_6}
\draw[fill,white] (\x) ellipse[x radius=2mm,y radius=2mm];
\foreach \x/\y in {v0/T_0,v1/T_1,v2/T_2,v3/T_3,v4/T_4,v5/T_5,v6/T_6}
\draw (\x) node{$\y$};
\foreach \x/\y in {v01/T_0,v04/T_1,v12/T_2,v42/T_3,v23/T_4,v45/T_5,v63/T_6,v56/x}
\draw[fill,white] (\x) ellipse[x radius=2mm,y radius=2mm];
\foreach \x/\y in {v01/S_2,v04/S_1,v12/S_1,v42/S_2,v23/I_2,v45/I_2,v63/S_3,v56/P_1}
\draw (\x) node{\tiny$\y$};
\end{scope}
\begin{scope}[xshift=5.8cm,yshift=-2mm]
\draw[thick] (-.5,-1.75)rectangle (-.1,-1.35); 
\draw[thick] (.5,-1.75) rectangle (.9,-1.35); 
\draw[thick] (-.5,-.12)rectangle (-.1,.28); 
\draw[thick] (.5,-.12)rectangle (.9,.28); 
\draw[thick] (.5,1.62)rectangle (.9,2.02); 
\draw[thick] (1.5,-.5)rectangle (1.9,-.1); 
\draw[thick] (1.5,2)rectangle (1.9,2.4); 
\draw[thick] (2.2,.75)rectangle (2.6,1.15); 
\end{scope}
\coordinate (S1) at (-1,-4.3);
\coordinate (S2) at (3.5,2.5);
\coordinate (S3) at (-5.2,1.5);
\coordinate (P1) at (0,-2.2);
\coordinate (P2) at (2.2,1.5);
\coordinate (I2) at (1,-2.5);
\coordinate (T0) at (-4.5,-2);
\coordinate (T1) at (-3.7,1.9);
\coordinate (T2) at (-3.2,-.9);
\coordinate (T3) at (-2,-1);
\coordinate (T4) at (-2,-3);
\coordinate (T5) at (-.7,-2.55);
\coordinate (T6) at (-.7,-1.8);
\coordinate (A) at (-2.5,-1.9);
\coordinate (B) at (-2.4,1.4);
\coordinate (C) at (-.7,-1.1);
		\draw[ thick] (-2.25,1.3) circle [radius=3cm]; 
		\draw[ thick] (.75,1.3) circle [radius=3cm]; 
		\draw[ thick] (-.75,-1.3) circle [radius=3cm]; 
\begin{scope}
		\clip (-.75,-2) rectangle (4.25,4.5);
		\draw[ thick] (-.75,1.3) ellipse [x radius=3cm,y radius=2.6cm]; 
\end{scope}
\begin{scope}[rotate=60]
		\clip (0,-2.7) rectangle (-3.1,2.7);
		\draw[ thick] (0,0) ellipse [x radius=3cm,y radius=2.6cm]; 
\end{scope}
\begin{scope}
    \clip (-2.4,-3) rectangle (2,.1);
    \draw[ thick] (-.75,.43)  circle [radius=2.68cm]; 
\end{scope}
\coordinate (T7) at (-.7,0.3);
\foreach \x/\y in {A/a,B/b,C/c}
\draw (\x) node{$\y$};
\foreach \x/\y in {T0/T_0,T1/T_1,T2/T_2,T3/T_3,T4/T_4,T5/T_5,T6/T_6,T7/T_7}
\draw (\x) node{$\y$} (\x);
\foreach \x/\y in {T0/T_0,T1/T_1,T2/T_2,T3/T_3,T4/T_4,T5/T_5,T6/T_6,T7/T_7}
\draw[thick] (\x) ellipse[x radius=3.5mm, y radius=2.4mm];
{
\begin{scope}
\begin{scope}[xshift=-1cm,yshift=-4.3cm]
    \draw[fill,white] (-.5,-.2) rectangle (.5,.2);
    \draw (0,0)node{\small$D(S_1)$};
\end{scope}    
 \begin{scope}[xshift=3.5cm,yshift=2.5cm]
    \draw[fill,white] (-.3,-.2) rectangle (.3,.2);
    \draw (0,0)node{\small$D(S_2)$};
\end{scope}    
 \begin{scope}[xshift=-5.2cm,yshift=1.5cm]
    \draw[fill,white] (-.3,-.2) rectangle (.3,.2);
    \draw (0,0)node{\small$D(S_3)$};
\end{scope}    
\begin{scope}[xshift=0cm,yshift=-2.2cm]
    \draw[fill,white] (-.5,-.2) rectangle (.3,.2);
    \draw (0,0)node{\small$D(P_1)$};
\end{scope}     
\begin{scope}[xshift=2.2cm,yshift=1.5cm]
    \draw[fill,white] (-.3,-.2) rectangle (.3,.2);
    \draw (0,0)node{\small$D(P_2)$};
\end{scope}     
\begin{scope}[xshift=1cm,yshift=-2.5cm]
    \draw[fill,white] (-.3,-.2) rectangle (.3,.2);
    \draw (0,0)node{\small$D(I_2)$};
\end{scope}    
\end{scope}
}
\end{tikzpicture}
\caption{The semi-invariant picture for $A_3:1\to2\to3$ on the left. Portion of lattice of torsion classes on the right. At point $a$ we have the wide subcategory $\cW=\{S_3,P_1,I_2\}$ (and direct sums) since these are the walls at $a$. The chambers at $a$ give the 5 torsion classes $T_2,\cdots,T_6$ in the pentagon on the right.}
\label{Fig: Stability diagram for A3}
\end{center}
\end{figure}
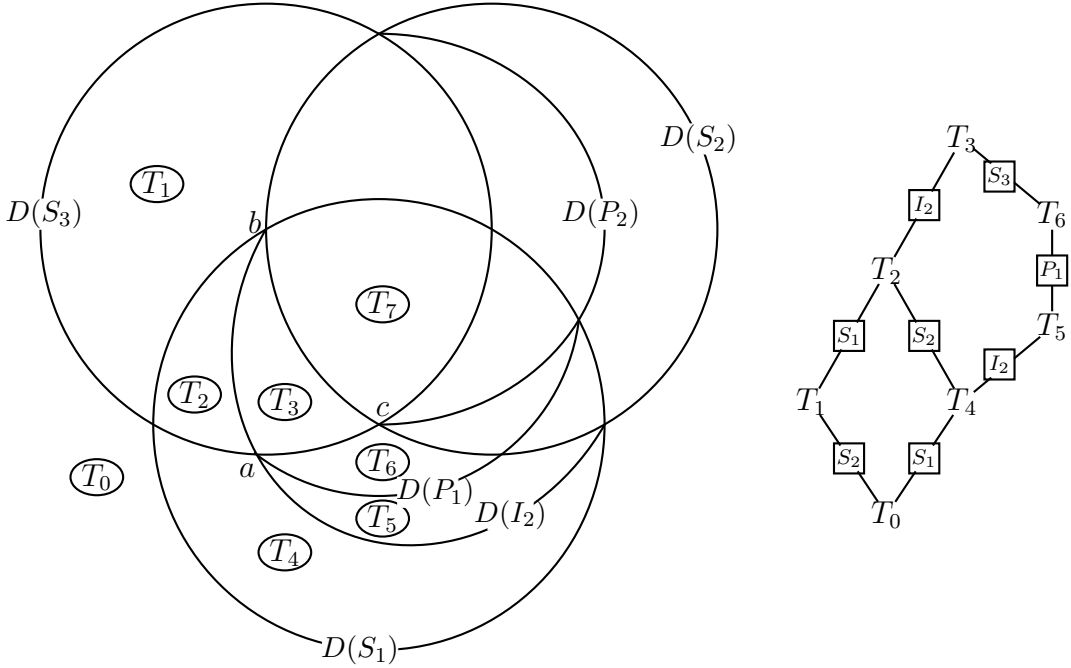
}

At the corners $a,b,c$ of the triangle there are wide subcategories $\cW,\cW',\cW''$ respectively. These are wide subcategories of rank $2$. We also have three wide subcategories of rank $1$ which lie on the edges of the triangle. We look at $\cW_0=\{S_2\}$. This is the rank 1 wide subcategory associated to the wall with brick label $S_2$. Since this wall has endpoints $b,c$ we have cluster morphisms $\cW'\to \cW_0$ and $\cW''\to \cW_0$. We will determine how these morphisms are labeled. The morphisms are defined up to sign:

$\cW'=\{S_1,S_2,I_2\}=S_3^\perp$. So, $mod\text-\Lambda\to \cW'$ is $S_3$ or $S_3[1]$.

$\cW_0=\{S_2\}=\cW'\cap I_2^\perp$. So, $\cW'\to \cW_0$ is $I_2$ or $I_2[1]$.

The morphism $\cW_0\to 0$ (at the center of the triangle) is $S_2[1]$ since the triangle labeled $T_3$ is on the negative side of the wall separating it from $T_7$ and $T_3\subset T_7$. The triangle is on the positive side of the other two walls. So, the other two morphisms to the center of the triangle are positive $I_2$ and $S_3$. This makes $\cW'\to \cW_0$ positive $I_2$.

$\cW''=\{S_3,S_2,P_2\}=P_1^\perp$. So, $mod\text-\Lambda\to \cW''$ is $P_1$ or $P_1[1]$.

$\cW_0=\{S_2\}=\cW''\cap S_3^\perp$. So $\cW''\to \cW_0$ is $S_3$ or $S_3[1]$. But it is parallel to the morphism $S_3$ to the center of the triangle. So, it must be positive.

To figure out the sign of the morphism $P_1$ we have to compare the two signed exceptional sequences coming from the sequences of morphisms:
\[
	mod\text-\Lambda\xrightarrow{S_3}\cW'\xrightarrow{I_2}\cW_0\xrightarrow{S_2[1]}0
\]
\[
	mod\text-\Lambda\xrightarrow{\pm P_1}\cW''\xrightarrow{S_3}\cW_0\xrightarrow{S_2[1]}0.
\]
These give signed exceptional sequences:
\[
	(S_2[1],I_2,S_3) \simeq (S_2[1],S_3,P_1 \,\text{or}\, P_1[1])
\]
which differ by the braid move $\sigma_2^{-1}$ given by $\sigma_2^{-1}(S_2[1],I_2,S_3) = (S_2[1],S_3,X)$ where, up to sign these are exceptional sequences (so $X=P_1$ or $P_1[1]$) and $\undim X$ is congruent to $\undim I_2=(1,1,0)$ modulo $\undim S_3=(0,0,1)$. So, $X=P_1$ with $\undim X=(1,1,1)$. So, the sign of $P_1$ is positive.

The six signed exceptional sequences labeling the 6 paths from the origin of $\RR^3$ to the center point of triangle labeled $T_3$ are all the factorizations of the cluster morphism $T_3:mod\text-\Lambda\to 0$. The first two, given above and in Figure \ref{Fig: Edges of cube are cluster morphisms} below are:
\begin{enumerate}
    \item $(S_2[1],I_2,S_3)$
    \item $(S_2[1],S_3,P_1)$
\end{enumerate}
The other four signed exceptional sequences are:
\begin{enumerate}
    \item[(3)] $(I_2,S_1,S_3)$, noting that, since $S_1$ is not relatively projective, it cannot be negative. Also $\undim S_1=\undim S_2[1]+\undim I_2$. So $(I_2,S_1,S_3)=\sigma_1^{-1}(S_2[1],I_2,S_3)$.
    \item[(4)] $(S_3,P_2[1],P_1)=\sigma_1^{-1}(S_2[1],S_3,P_1)$ since $P_2$ fits in the exceptional sequence and $\undim P_2[1]$ is congruent to $\undim S_2[1]$ modulo $\undim S_3$.
    \item[(5)] $(S_3,P_1,S_1)=\sigma_2^{-1}(S_3,P_2[1],P_1)$.
    \item[(6)] $(I_2,S_3,S_1)=\sigma_2(I_2,S_1,S_3)$.
\end{enumerate}
The last two are left to the reader to verify. Here the cluster corresponding to $T_3$ is $T_3=\{S_1,S_3,P_1\}$ since this is a cluster, but that was an accident since the torsion class $T_3$ is a cluster. Usually you take the support tilting object contained in the torsion class.

}

{ 
\begin{figure}[htbp]
\begin{center}
\begin{tikzpicture}[scale=1.7]
\coordinate (S1) at (-1,-4.3);
\coordinate (S2) at (3.5,2.5);
\coordinate (S3) at (-5.2,1.5);
\coordinate (P1) at (0,-2.2);
\coordinate (P2) at (2.2,1.5);
\coordinate (I2) at (1,-2.5);
\coordinate (T0) at (-4.5,-2);
\coordinate (T1) at (-3.7,1.9);
\coordinate (T2) at (-3.2,-.9);
\coordinate (T3) at (-2.2,-.6);
\coordinate (T3c) at (-2.25,-.63);
\coordinate (T3L) at (-2.3,-.63);
\coordinate (T3m) at (-2.2,-.65);
\coordinate (T4) at (-2,-3);
\coordinate (T5) at (-.7,-2.55);
\coordinate (T6) at (-.7,-1.8);
\coordinate (Ac) at (-2.4,-1.7);
\coordinate (A) at (-2.3,-1.9);
\coordinate (Am) at (-2.3,-1.6);

\coordinate (B) at (-2.3,1.35);

\coordinate (Bc) at (-2.24,1.3);

\coordinate (BcD)at (-1.22, 1);

\coordinate (D) at (-.2, .7);

\coordinate (Cc) at (-.73,-1.3);
\coordinate (CcD) at (-.45,-.3);

\coordinate (C) at (-.7,-1.66);

\draw[thick,->,blue] (D)--(-1.22, 1);
\draw[thick,blue] (Bc)--(-1.22, 1);
\draw[blue,fill] (Bc) circle[radius=.5mm];
\draw[blue] (Bc) node[below]{\quad\  \small$\cW'$};
\draw[fill,blue] (-1.8,-.3) circle[radius=.5mm];
\draw[blue] (-1.8,-.3) node[right]{\small$\cW_0=\{S_2\}$};
\draw[thick,blue,->](-1.8,-.3)--(T3);
\draw[fill,blue] (T3c) circle[radius=.5mm];
\draw[blue] (T3c) node[below]{\small$0$};
\draw[red,thick,->] (-1.5,-1.6)--(T3m);
\draw[thick,->,blue] (-2.24,1)--(-2.1, .4);
\draw[blue] (-2,.6) node{$I_2$};
\draw[blue] (-1.9,-.35) node[left]{\small$S_2[1]$};
\draw (Am) node[above]{\tiny$\cW$};
\draw[thick,red,->] (-.9,-1.2)--(-1.5,-.7);
\draw[red] (-1.5,-.7) node[right]{$S_3$};
\draw[red] (-1.85,-1.05)node[left]{$S_3$};

\draw[fill,red] (Cc) circle[radius=.5mm];
\draw[red] (Cc) node[above]{\small$\cW''$};

\draw[->] (-2.7,-.63)--(T3L);
\draw (-2.5,-.57)node[below]{\tiny$I_2$};
\draw[fill,blue] (D) circle[radius=.5mm];
\draw[blue] (D) node[right]{$mod\text-\Lambda$};
\draw (D) node[above]{$d$};

\draw[thick,->,red] (D)--(CcD);
\draw[thick,red] (Cc)--(CcD);
\draw[red] (CcD) node[right]{$P_1$};
\draw[->,thin] (D)--(Am);
\draw[thin] (Am)--(Ac);
\draw (A) node[left]{$g(S_1)=a$};
\draw (B) node[left]{$g(S_3)=b$};
\draw (C) node{$g(P_1)=c$};

\draw (-2.7,-.5) node[left]{\small$D(I_2)$};

\clip (-3.3,-2.3) rectangle (.2,2);
		\draw[very thick] (-2.25,1.3) circle [radius=3cm]; 
		\draw[very thick] (.75,1.3) circle [radius=3cm]; 
		\draw[very thick] (-.75,-1.3) circle [radius=3cm]; 
\begin{scope}
		\clip (-.75,-2) rectangle (4.25,4.5);
		\draw[very thick] (-.75,1.3) ellipse [x radius=3cm,y radius=2.6cm]; 
\end{scope}
\begin{scope}[rotate=60]
		\clip (0,-2.7) rectangle (-3.1,2.7);
		\draw[very thick] (0,0) ellipse [x radius=3cm,y radius=2.6cm]; 
\end{scope}
\begin{scope}
    \clip (-2.4,-3) rectangle (2,.1);
    \draw[very thick] (-.75,.43)  circle [radius=2.68cm]; 
\end{scope}
\draw[fill,white] (-1,1.7) circle[radius=3mm];
\draw (-1,1.7) node{\small$D(S_1)$};
\draw (-1.8,-1.7) node[above]{\small$D(S_3)$};
\draw (-2.1,0.2) node[right]{\small$D(S_2)$};

\draw[blue] (-1.2,1) node[below]{$S_3$};
%
\end{tikzpicture}
\caption{Vertices of the cube are labeled with wide subcategories. The edges are labeled with cluster morphisms. In blue we have the sequence of cluster morphisms of rank 1 where $\cW'=\{S_1,S_2,I_2\}$:
\[
	mod\text-\Lambda\xrightarrow{[S_3]}\cW'\xrightarrow{[I_2]}\cW_0\xrightarrow{[S_2[1]]} 0
\]
giving the signed exceptional sequence $(S_2[1],I_2,S_3)$. In red we have the following sequence of cluster morphisms where $\cW''=\{S_2,S_3,P_2\}$:
\[
	mod\text-\Lambda\xrightarrow{[P_1]}\cW''\xrightarrow{[S_3]}\cW_0\xrightarrow{[S_2[1]]} 0
\]
giving another signed exceptional sequence $(S_2[1],S_3,P_1)$. 
}
\label{Fig: Edges of cube are cluster morphisms}
\end{center}
\end{figure}
}

\section{Acknowledgments} The authors thank Kent Orr and Jerzy Weyman for their inspiration and collaboration in other works which lead to this work. We would not have discovered these concepts without them. The authors also thank the anonymous referee for their interest in this work and for their very thorough review which lead to many improvements in both exposition and content of the paper. Finally, the first author is grateful to the Simons Foundation for its generous support during the writing of this paper: Grant \#686616.

\end{document}